\author{Ivan {\sc Boyer}\footnote{Sous la direction de Jean-François
    {\sc Mestre}}\\Université Paris Diderot}
\date{\today}
\title{Courbes à multiplication réelle\\ par des sous-corps de cyclotomiques}
\DeclareFontShape{T1}{lmr}{b}{sc}{<->ssub*cmr/bx/sc}{}
\DeclareFontShape{T1}{lmr}{bx}{sc}{<->ssub*cmr/bx/sc}{}
\newcommand{\trp}[1]{\kern-.1em\prescript{t\kern-.08em}{}{#1}}
\newcommand{\Mc}{\mathcal M}
\newcommand{\Oc}{\mathcal O}
\newcommand{\Zz}[1]{{\mathbb Z/#1\mathbb Z}}
\newcommand{\Z}{\mathbb Z}
\newcommand{\Q}{\mathbb Q}
\newcommand{\C}{\mathbb C}
\newcommand{\F}{\mathbb F}
\newcommand{\Pp}{\mathbb P}
\newcommand{\Ri}{Riemann\xspace}
\newcommand{\Hu}{Hurwitz\xspace}
\newcommand{\ti}{\-{--}}
\newcommand{\cad}{c'est-\`a-dire\xspace}
\newcommand{\cf}{\textit{cf.}\xspace}
\newtheorem{defi}{D\'efinition}[section]
\newtheorem{theo}[defi]{Th\'eor\`eme}
\newtheorem*{theos}{Th\'eor\`eme}
\newtheorem{prop}[defi]{Proposition}
\newtheorem{nota}[defi]{Notation}
\theoremstyle{definition}
\newtheorem{expl}[defi]{Exemple}
\newtheorem{rmq}[defi]{Remarque}
\newtheorem{algo}[defi]{Algorithme}
\DeclareMathOperator{\Div}{Div}
\let\div\relax
\DeclareMathOperator{\div}{div}
\DeclareMathOperator{\Res}{Res}
\DeclareMathOperator{\End}{End}
\DeclareMathOperator{\Jac}{Jac}
\DeclareMathOperator{\Aut}{Aut}
\DeclareMathOperator{\D}{d\!}
\DeclareMathOperator{\ord}{ord}
\DeclareMathOperator{\carac}{car}
\renewcommand\epsilon\varepsilon
\renewcommand\emptyset\varnothing
\renewcommand{\phi}{\varphi}
\renewcommand*{\backref}[1]{%
\StrExpand[0]{#1}{\arg}%
\let\bs\backrefxxx%
\renewcommand*\backrefxxx[3]{}%
\IfSubStr{\arg}{,}{\def\fin{p}}{\def\fin{}}%
\let\backrefxxx\bs%
({\itshape cf.} p\fin.~\arg)
}%
\newcommand{\urlb}[1]{\href{http://#1}{\url{#1}}}
\renewcommand\labelitemi\textbullet
\renewcommand{\FrenchLabelItem}\textbullet
\newif\ifsommaire
\newcommand{\addtotoc}[1]{%

{\ifbool{nosommaire}%
{\addtocontents{toc}{\protect\ifsommaire\protect\else}
\addcontentsline{toc}{section}{#1}%
\addtocontents{toc}{\protect\fi}%
}
{\addcontentsline{toc}{section}{#1}}%
}
}
\newcommand{\WS}{Weierstrass\xspace}
\newcommand{\va}{vari\'et\'e ab\'elienne\xspace}
\newcommand{\tcm}{type-\textsc{cm}\xspace}
\newcommand{\RR}{Riemann-Roch\xspace}
\renewcommand\theta{{\mathbold\vartheta}}
\newenvironment{itemize0}[1]%
   {%
   \begin{list}{--}%
   {\setlength{\leftmargin}{0cm}%
   }%
   #1%
   }%
   {%
   \end{list}%
   }%
\newcommand{\listeprog}{Liste des algorithmes}
\begin{document}

\maketitle

\begin{abstract}
In \emph{Endomorphism Algebras of Jacobians},~\cite{ellen},  Ellenberg gives group theory tools to construct jacobians of curves with real multiplication. He shows the existence of curves and family of curves with real multiplication by subfields of cyclotomic fields. Among them, some are already known such as the family of Mestre in~\cite{mestrehyp}, or the family of Tautz, Top and  Verberkmoes in~\cite{top}. In this article, we  give explicit families of real multiplication curves, for each case Ellenberg showed their existence.
\end{abstract}

\tableofcontents

\section*{Introduction}
\addtotoc{Introduction}
Dans cet article, on s'intéresse aux familles de courbes à multiplication réelle dont J. Ellenberg montre l'existence dans \emph{Endomorphism Algebras of Jacobians}~\cite{ellen}. Pour chaque famille, nous montrons comment construire explicitement de telles courbes sur des extensions convenables de $\Q$ ou lorsque le calcul est trop lourd, sur des corps finis. Plus précisément, nous obtenons 
\begin{itemize}\item une famille à trois paramètres de courbes à multiplication réelle par $\Q(\zeta_l^+)$,\item une famille à un paramètre de courbes à multiplication réelle par $\Q(\zeta_l^{(4)})$\item une famille à un paramètre de courbes à multiplication réelle par $\Q(\zeta_l^{(6)})$,\item  deux courbes à multiplication réelle respectivement par $\Q(\zeta_l^{(8)})$ et $\Q(\zeta_l^{(10)})$.
\end{itemize}

On peut aussi se reporter au tableau final de la section~\ref{chap2:sec:fin}, un peu plus détaillé. Parmi ces familles, on s'attache à donner des exemples définis sur $\Q$.
\begin{theos}
Il existe une famille explicite et définie sur $\Q$, à deux paramètres, de courbes à multiplication réelle par $\Q(\zeta_l^+)$.
\end{theos}

Par exemple, on obtient à nouveau (\cf p.~\pageref{q5plus}) une famille à deux paramètres de courbes hyperelliptiques de genre 2 à multiplication réelle par $\Q(\zeta_5^+)$. Dans le cas $l=7$, nous obtenons la famille à deux paramètres suivante.
\begin{theos}
L'équation quartique suivante, en $U$ et $V$ et à deux paramètres $r$ et $t$, est à multiplication réelle par $\Q(\zeta_7^+)$. 

{\relpenalty=0000
\binoppenalty=0000
\footnotesize
\noindent$- ( 2tr-r-{t}^{3} )  ( -{r}^{3}-40{t}^{2}{r}^{3}+
10t{r}^{3}-80{r}^{3}{t}^{4}+80{t}^{3}{r}^{3}+32{r}^{3}{t}^{5}+
16{r}^{2}{t}^{7}-56{r}^{2}{t}^{6}+64{r}^{2}{t}^{5}-30{r}^{2}{t
}^{4}+5{r}^{2}{t}^{3}+12r{t}^{8}-16r{t}^{7}+5r{t}^{6}+{t}^{9}
 ) +2r ( 2t-1 ) ^{2} ( {r}^{2}-4t{r}^{2}+4
{t}^{3}r+4{r}^{2}{t}^{2}-10r{t}^{4}+2{t}^{6}+4r{t}^{5}
 ) U+{r}^{2} ( 2t-1 ) ^{4}{U}^{2}-r ( 2t-1
 ) ^{2}{U}^{3}+r ( 2t-1 ) ^{2} ( -2{r}^{3}-6
{r}^{2}{t}^{2}+12t{r}^{3}+24{r}^{2}{t}^{3}-24{t}^{2}{r}^{3}-12
r{t}^{5}-24{r}^{2}{t}^{4}+16{t}^{3}{r}^{3}+28r{t}^{6}-5{t}^{
8}-8r{t}^{7} ) V-2{t}^{2} ( {t}^{3}+2r-6tr+4{t}^{
2}r ) ^{2}UV-t ( {t}^{3}+2r-6tr+4{t}^{2}r ) 
 ( {r}^{2}-4t{r}^{2}+4{t}^{3}r+4{r}^{2}{t}^{2}-10r{t}^{4}
+2{t}^{6}+4r{t}^{5} ) {V}^{2}-2rt ( 2t-1 ) ^{
2} ( {t}^{3}+2r-6tr+4{t}^{2}r ) U{V}^{2}+t ( {t}
^{3}+2r-6tr+4{t}^{2}r ) {U}^{2}{V}^{2}+2r ( 2t-1
 ) ^{2} ( {r}^{2}-4t{r}^{2}+4{t}^{3}r+4{r}^{2}{t}^{2}
-10r{t}^{4}+2{t}^{6}+4r{t}^{5} ) {V}^{3}+ ( {r}^{2}-4
t{r}^{2}+4{t}^{3}r+4{r}^{2}{t}^{2}-10r{t}^{4}+2{t}^{6}+4r{
t}^{5} ) U{V}^{3}+{t}^{2} ( {t}^{3}+2r-6tr+4{t}^{2}r
 ) ^{2}{V}^{4}
.$
}%
\end{theos}
Parmi cette famille, on exhibe l'exemple :
\[v^3(2u-1)-u(u^3+2u^2-u-1)=0,\] 
qui est à multiplication complexe par $\Q(\zeta_7^+,i\sqrt3)$. En utilisant une autre construction, on donne aussi deux autres familles à un et deux paramètres de telles courbes.
\begin{theos}
Les courbes données par les équations quartiques à un paramètre $s$, 
\[2v + u^3 +(u+1)^2+s\bigl((u^2+v)^2 - v (u+v)(2u^2-uv + 2v)\bigr),\]
et deux paramètres $s$ et $t$,

{
\relpenalty=0000
\binoppenalty=0000
\noindent $- ( s+t ) ^{2}+2 ( s+t ) sv+ ( -{s}^{2}+3
{t}^{2}-t ) v^{2}+ ( 6{t}^{2}-2t-2{s}^{2}
 ) v^{3}+2 ( s+t ) ^{2}u+ ( 6{t}^{2}-2t-
2{s}^{2} ) uv^{2}+ ( -{s}^{2}+3{t}^{2}-t ) uv
^{3}- ( s+t )  ( s-t ) u^{2}+ ( 2t+2{
s}^{2}-6{t}^{2} ) u^{2}v+ ( t-3{t}^{2}+{s}^{2}
 ) u^{2}v^{2}+ ( s+t )  ( s-t ) u^{3
}+ ( 2t+2{s}^{2}-6{t}^{2} ) u^{3}v+ ( -{s}^{2}
+3{t}^{2}-t ) u^{4}.$
}

\noindent
sont à multiplication réelle par $\Q(\zeta_7^+)$.
\end{theos}

\noindent
On donne enfin, pour les exemples définis sur $\Q$, la courbe quartique
\[u^3+2u^2v+2u^2-uv^3-2uv^2-2uv-2u+v^4+v^2-v+2\]
qui est à multiplication réelle par $\Q(\zeta_{13}^{(4)})$ ; c'est un exemple du résultat plus général suivant.
\begin{theos}
Soit $l\equiv1\mod 4$ un nombre premier. Il existe une courbe, définie sur $\Q(i)$, à multiplication réelle par $\Q(\zeta_{l}^{(4)})$.
\end{theos}

\section{Recouvrements, monodromie et multiplication réelle}\label{sec:ellen1}

\paragraph{Multiplication réelle} Avant de décrire ces familles, commençons par fixer les notions et les notations principales pour la suite. Voici ce que l'on entend par multiplication réelle. 

\begin{defi}[Multiplication réelle]
\label{mult:reelle}
On dit qu'une \va $A$ de dimension $g$ est à \emph{multiplication réelle} s'il existe un corps de nombres totalement réel $F$ de dimension $g$ tel que 
\[
F\hookrightarrow\End_\Q(A).
\]  
De plus, on dit plus simplement qu'une courbe de genre $g$ est à \emph{multiplication réelle} par $F$ si sa jacobienne l'est.
\end{defi}  

Sur les corps finis de caractéristique $p$, on peut vérifier la multiplication complexe grâce au polynôme caractéristique du Frobenius, $\pi(t)$. Ici, comme on s'intéresse à la multiplication réelle, il est avantageux de regarder le polynôme caractéristique de la trace du Frobenius, donné par 
$\Res_t(\pi(t),rt-t^2-p)$.

\paragraph{Recouvrements} Le point de départ de J. Ellenberg consiste à considérer une courbe algébrique projective, non-singulière, $Y$, définie sur un corps algébriquement clos, $k$, telle que son groupe d'automorphismes soit \og important \fg{}. En notant $G\subset\Aut(Y)$ un sous-groupe, on a une action naturelle
\[
\Q[G]\rightarrow\End_\Q(\Jac(Y)).
\]
On note $C$ la courbe quotient non singulière $Y/G$,  associée aux éléments $G$\ti invariants de $k(Y)$ ; on considère alors un sous-groupe $H$ de $G$ et on définit l'élément $\pi_H\in\Q[G]$, 
\[\pi_H:=\frac1{|H|}\sum_{h\in H}H,\]
et l'algèbre $\Q[H\backslash G/H]:=\{\pi_Hg\pi_H,\ g\in G\}$, sous-algèbre de $\Q[G]$ dite de Hecke. On note enfin $X=Y/H$, la courbe quotient non singulière, associée aux éléments $H$\ti invariants de $k(Y)$. 

L'action naturelle  de $\Q[G]$ se restreint en une action de $\Q[H\backslash G/H]$ sur $\Jac(X)$. On note, suivant J. Ellenberg, $\mathcal H_{X/C}$ l'image de cette algèbre dans $\End_\Q(\Jac(X))$. On a le diagramme suivant qui résume la situation.
\begin{equation}\label{diag:chap:2}
\begin{tikzpicture}[shorten >=1pt,node distance=1.8cm,auto,baseline={([yshift=2.16cm]current bounding box.south)}]
\node (Y) {$Y$};
\node (X) [below left=of Y] {$X$};
\node (C) [below right=of X] {$C$};
\draw[-latex'] (Y) to node {$k(C)=k(Y)^G$} (C);
\draw[-latex'] (Y) to node[swap] {$k(X)=k(Y)^H$} (X);
\draw[-latex'] (X) to (C);
\end{tikzpicture}
\end{equation}
\paragraph{Branchements et monodromie}
En considérant l'injection $\End_\Q(\Jac(X))\hookrightarrow\End_\Q(H_1(X,\Q_l))$ pour un entier premier $l\neq\carac(k)$, on peut calculer $\mathcal H_{X/C}$ grâce à la représentation de $\Q[H\backslash G/H]$ sur $H_1(X,\Q_l)$. Cela permet à J. Ellenberg de relier $\mathcal H_{X/C}$ au branchement de $Y\rightarrow C$. Grâce à une utilisation fine de la formule de \Ri-\Hu, J. Ellenberg exhibe des cas de monodromie de ce recouvrement, que nous détaillons ci-dessous, permettant d'obtenir des courbes $X$ à multiplication réelle.

Afin d'énoncer ce résultat, nous avons encore besoin de quelques définitions et notations.
\begin{defi}[Groupes métacycliques]
On dit qu'un groupe est \emph{métacyclique} s'il possède un sous-groupe normal cyclique dont le quotient est lui-aussi cyclique.
\end{defi}

Par la suite, on considère un type plus restreint de groupes métacycliques. Soit $l$ un entier premier impair, $n$ un entier divisant $l-1$ et $k\in(\Zz l)^*$ d'ordre $n$.

\begin{defi}[Groupes métacycliques $G_{l,n}$]
On définit le groupe \emph{métacyclique} $G_{l,n}$ par deux générateurs $\alpha,\sigma$ d'ordres respectifs $l$ et $n$ avec la relation de conjugaison $\alpha\sigma\alpha^{-1}=\sigma^k$ :
\[G_{l,n}:=\langle\alpha,\sigma,\ \sigma^l=\alpha^n=1,\alpha\sigma\alpha^{-1}=\sigma^k\rangle.\]
\end{defi}
On reconnaît bien évidemment dans cette définition les groupes diédraux, pour $n=2$.
Du fait que l'on a choisi $n\mid l-1$, on peut considérer des sous-corps d'indice $n$ des corps cyclotomiques.
\begin{nota}[Sous-corps des cyclotomiques]
Soit $l$ un entier premier impair. On note $\zeta_l$ une racine $l$-ième de l'unité et $\Q(\zeta_l)$ le corps cyclotomique de degré $l-1$. Pour $n$ divisant $l-1$, on note $\Q(\zeta_l^{(n)})$ le sous-corps d'indice $n$ de $\Q(\zeta_l)$. Il est engendré par 
\[\zeta_l^{(n)}:=\sum_{i=0}^{n-1}\zeta_l^{k^i}\]
où $k\in(\Zz{l})^*$ est d'ordre $n$. On rappelle que l'on note de manière abrégée dans la suite $\Q(\zeta_l^+):=\Q(\zeta_l^{(2)})$, le sous-corps totalement réel maximal de $\Q(\zeta_l)$.
\end{nota}
En particulier, pour tout $n$ impair divisant $l-1$, le corps $\Q(\zeta_l^{(n)})$ est totalement réel, de dimension $\tfrac{l-1}n$.
Enfin, afin de décrire les branchements et la monodromie, on conserve la notation de J. Ellenberg suivante.
\begin{nota}\label{nota:types}
Soit $Y\rightarrow\Pp^1$ un recouvrement de groupe de Galois $G_{l,n}$, possédant $r$ points de branchement. On note $g_1,\dots,g_r\in G_{l,n}$ la monodromie de ce recouvrement en chacun des points de branchement. On note alors $d_i=0$ si l'ordre de $g_i$ est $l$, $d_i=\tfrac n{\ord{g_i}}$ sinon.

\noindent
On dit enfin qu'un tel recouvrement est de type $(d_1,\dots,d_r)$.
\end{nota}
On est maintenant en mesure d'énoncer un des principaux résultats que J. Ellenberg expose dans son article,~\cite{ellen}, reliant les données de branchement de $Y\rightarrow C$ à $\mathcal H_{X/C}$, dans le cas où $G$ est un groupe métacyclique et $C=\Pp^1$.
\begin{theo}[Ellenberg]
Soient $l$ un entier premier impair, $n$ un entier pair, divisant $l-1$. Soit $Y\rightarrow \Pp^1$ un recouvrement de groupe de Galois métacyclique, $G_{l,n}$. On note $H$ le sous-groupe de $G_{l,n}$ engendré par $\alpha$, d'ordre $n$ et $X=Y/H$.

Alors, $\Jac(X)$ est à multiplication réelle par $\Q(\zeta_l^{(n)})$ si et seulement si le recouvrement $Y\rightarrow \Pp^1$ est parmi les types listés dans la table~\ref{table:types} ci-dessous.
\end{theo}
\vspace*{-10pt}
\begin{table}[h]
\centering
\vspace*{5pt}
\begin{tabular}{|c|c|}\hline$n$&\textit{types}\\\hline
2&\hyperref[type:0011]{$(0,0,1,1)$}, \hyperref[type:01111]{$(0,1,1,1,1)$},\hyperref[type:111111]{$(1,1,1,1,1,1)$}\\
4&\hyperref[type:011]{$(0,1,1)$},\hyperref[type:1122]{$(1,1,2,2)$}\\
6&\hyperref[type:112]{$(1,1,2)$},\hyperref[type:2233]{$(2,2,3,3)$}\\
8&\hyperref[type:114]{$(1,1,4)$}\\
10&\hyperref[type:125]{$(1,2,5)$}\\\hline
\end{tabular}
\vspace*{5pt}
\caption{Types de recouvrement de groupe $G_{l,n}$}
\label{table:types}
\end{table}

La suite de ce chapitre consiste à donner des exemples explicites de recouvrements correspondant à ces types. L'idée consiste à construire $Y$ en deux temps, en passant par une courbe $Z$, dont les recouvrements $Y\rightarrow Z\rightarrow \Pp^1$ sont de degrés respectifs $l$ et $n$ et de groupes de Galois cycliques $\Zz l$ et $\Zz n$.

Si le type considéré de la table~\ref{table:types} possède des zéros, on cherche des revêtements $Y\rightarrow Z$ ramifiés, et non-ramifiés dans le cas contraire.

On commence dans tous les cas par chercher un recouvrement $Z\rightarrow\Pp^1$ de groupe de Galois cyclique d'ordre $n$. On conserve la même notion de type de ramification, en adaptant la notation~\ref{nota:types} sans difficulté. On cherche donc des revêtements de degré $n$ dont les types figurent dans la table~\ref{table:types}, hormis les 0. On construit ensuite un deuxième recouvrement $Y\rightarrow Z$, de groupe de Galois $\Zz l$, de façon à obtenir le bon type et, par composition, le groupe de Galois $G_{l,n}$, ce qui est en fait le plus contraignant. Cette stratégie se résume sur le diagramme suivant.

\newsavebox{\Crv}
\sbox{\Crv}{$\Pp^1$}
\newlength{\dimCrv}
\settowidth{\dimCrv}{\usebox{\Crv}}

\begin{figure}[H]
\vspace*{10pt}
\begin{center}
\begin{tikzpicture}[shorten >=1pt,node distance=1.8cm,auto,baseline={([yshift=2.16cm]current bounding box.south)}]
\node (Y) {\makebox[\dimCrv]{$Y$}};
\node (X) [below left=of Y] {$X$};
\node (Z) [below right=of Y] {$Z$};

\node (P) [below right=of X] {$\Pp^1$};
\draw[-latex'] (Y) to node {$\Zz l$} (Z);
\draw[-latex'] (Y) to node[swap] {$\Zz n$} (X);
\draw[-latex'] (Z) to node {$\Zz n$} (P);
\draw[-latex'] (X) to node{} (P);
\draw[-latex'] (Y) to node{$G_{l,n}$} (P);
\end{tikzpicture}
\caption{Revêtements métacycliques galoisiens}
\end{center}
\end{figure}
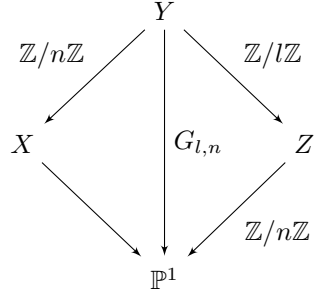

\section{Multiplication réelle par \texorpdfstring{$\Q(\zeta_l^+)$}{Q(zeta\_l+)}}
Sauf mention contraire, on se place dans cette section sur le corps de base $k=\Q$.
Le cas $n=2$ a déjà beaucoup été étudié, et l'on renvoie par exemple à W. Tautz, J. Top et A. Verberkmoes~\cite{top} et J.-F. Mestre~\cite{mestrehyp} pour des familles à un, respectivement deux, paramètres de courbes hyperelliptiques à multiplication réelle par $\Q(\zeta_l^+)$. On retrouve ici les familles de~\cite{top} et on donne d'autres familles différentes. Le type~\ref{type:111111} correspond par ailleurs aux résultats de A. Brumer.
\subsection{Type (0,0,1,1)}\label{type:0011}
Comme on l'a expliqué à la fin de le section précédente, on commence par chercher un recouvrement de la droite projective $Z\rightarrow\Pp^1$, branché en deux points, de \og type \fg{} (1,1), c'est à dire simplement un recouvrement de degré 2 ramifié en deux points. Par la formule d'Hurwitz, cela impose\vspace*{-7pt}
\[g(Z)=\tfrac12\big(2+2(2g(\Pp^1)-2)+(2-1)+(2-1)\big)=0\vspace*{-7pt}\]
Si $f(x)\in k(x)$ est une fraction rationnelle quelconque, non constante, $y=f(x)$ est une courbe de genre 0 et le revêtement de degré 2 $(x,y)\mapsto x^2$ est ramifié en 0 et $\infty$.

On cherche ensuite un revêtement de $Z$ de degré $l$, non ramifié en $0,\infty$ mais en deux autres points, dont la ramification est nécessairement d'ordre $l$. On vérifie qu'en choisissant $P(x)\in k[x]$ un polynôme de degré 2, la courbe $Y$ définie par\vspace*{-9pt} 
\[t^l=y\quad \textrm{et}\quad y=\frac{P(x)}{P(-x)},\vspace*{-9pt}\]
possède les propriétés souhaitées : en effet, le recouvrement $Y\rightarrow Z$ donné par $(x,y,t)\mapsto(x,y)$ est ramifié d'ordre $l$ en les 4 racines des polynômes $P(x)$ et $P(-x)$, qui sont ensuite identifiées deux par deux par $(x,y)\mapsto x^2$.

Il est clair que l'on peut choisir $P$ unitaire, et que par un changement de variable homographique $x\mapsto \lambda x$, on peut choisir $P(x)=x^2+x+a$, $P$ ne pouvant pas être un polynôme pair.
\begin{prop}
Soit $Y$ la courbe définie par l'équation\vspace*{-8pt} \[t^l=\frac{x^2+x+a}{x^2-x+a}.\vspace*{-7pt}\]
Alors, le revêtement $Y\rightarrow\Pp^1$, $(x,y)\mapsto x^2$ est de degré $2l$, de type $(0,0,1,1)$ et de groupe de Galois le groupe diédral $G_{l,2}$.
\end{prop}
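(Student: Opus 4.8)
The plan is to study the tower $Y\to Z\to\Pp^1$, where $Z$ denotes the projective $x$-line, the map $Z\to\Pp^1$ is $x\mapsto u:=x^2$, and $Y\to Z$ is the cyclic cover defined by $t^l=y$ with $y=\tfrac{x^2+x+a}{x^2-x+a}=\tfrac{P(x)}{P(-x)}$, $P(x)=x^2+x+a$. Write $x_1,x_2$ for the roots of $P$, so that $x_1+x_2=-1$ and $x_1x_2=a$; I assume throughout that $a\notin\{0,\tfrac14\}$, so that $x_1,x_2$ are distinct and nonzero and the four points $x_1,x_2,-x_1,-x_2$ are pairwise distinct. One works geometrically, over $\bar k$ (so that $\zeta_l\in\bar k$), since that is where the monodromy type and the Galois group of Section~\ref{sec:ellen1} are defined. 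Since $x_1+x_2\neq0$, no root of $P$ is a root of $P(-x)$, so $y$ has a simple zero at $x_1$ and is therefore not a $d$-th power in $\bar k(x)$ for any $d>1$; as $l$ is prime, $T^l-y$ is irreducible over $\bar k(x)$. Hence $Y$ is geometrically irreducible, $[\bar k(Y):\bar k(Z)]=l$, and $\deg(Y\to\Pp^1)=2l$.

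Next I would determine the branch locus of $Y\to\Pp^1$ and the associated type. The double cover $Z\to\Pp^1$ is branched exactly over $u=0$ and $u=\infty$, with index $2$. The cyclic cover $Y\to Z$ is branched exactly at the zeros and poles of $y$, namely the simple zeros $x_1,x_2$ and the simple poles $-x_1,-x_2$, at each of which it is totally and tamely ramified of index $l$; moreover $y(0)=y(\infty)=1$, so $Y\to Z$ is unramified over $x=0,\infty$. Since the quotient $x\mapsto x^2$ identifies $x_i$ with $-x_i$, the branch locus of the composite $Y\to\Pp^1$ is exactly $\{0,\infty,x_1^2,x_2^2\}$, and these four points are pairwise distinct (as $x_1\neq\pm x_2$ and $x_i\neq0$). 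Using $e_{Y/\Pp^1}=e_{Y/Z}\cdot e_{Z/\Pp^1}$: over $u=0$ and over $u=\infty$ the fibre of $Y$ consists of $l$ points each with ramification index $2$ (so the monodromy has order $2$ and $d_i=1$), whereas over $u=x_1^2$ and $u=x_2^2$ the fibre consists of $2$ points each with ramification index $l$ (so the monodromy has order $l$ and $d_i=0$). This gives exactly the type $(0,0,1,1)$.

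For the Galois group, over $\bar k$ the cover $Y\to Z$ is Galois, cyclic of order $l$, generated by $\tau\colon t\mapsto\zeta_l t$. The involution $\iota\colon x\mapsto -x$ of $Z/\Pp^1$ satisfies $\iota^* y=y^{-1}$, so it lifts to an automorphism of $\bar k(Y)$ by $\iota^* t=t^{-1}$; consequently $\bar k(Y)/\bar k(u)$ is Galois of degree $2l$, with automorphism group $\langle\tau,\iota\rangle$ (note that $\langle\tau\rangle$ is normalized by $\iota$ and meets $\langle\iota\rangle$ trivially, so $|\langle\tau,\iota\rangle|=2l=[\bar k(Y):\bar k(u)]$). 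Evaluating on the generator $t$ one finds $\tau^l=1$, $\iota^2=1$ and $\iota\tau\iota^{-1}=\tau^{-1}$, so $\Gal(\bar k(Y)/\bar k(u))$ is the dihedral group of order $2l$, \ie $G_{l,2}$ (the metacyclic group with $n=2$ and $k\equiv-1\bmod l$), the subgroup $\langle\iota\rangle$ playing the role of the distinguished subgroup $H$ of order $n$.

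I do not anticipate a serious obstacle: the argument is essentially bookkeeping with the multiplicativity of ramification indices in a tower of covers. The one point that needs care is the descent step identifying the branch locus of $Y\to\Pp^1$ --- in particular, checking that the four branch points $0,\infty,x_1^2,x_2^2$ are genuinely distinct. This fails precisely when $a\in\{0,\tfrac14\}$: there the degenerate configuration makes two of the branch points coincide and the type drops to $(0,1,1)$. One should therefore make the hypothesis $a\neq0,\tfrac14$ explicit (it is left implicit in the statement), and one should also verify carefully that $\iota^* t=t^{-1}$ is a legitimate lift of $\iota$, which is what makes the composite cover Galois with the displayed dihedral conjugation relation.
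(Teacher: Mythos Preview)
Your argument is correct and follows the same approach as the paper: study the tower $Y\to Z\to\Pp^1$, compute the ramification at each level and use multiplicativity, then exhibit the two automorphisms $(x,t)\mapsto(x,\zeta_l t)$ and $(x,t)\mapsto(-x,t^{-1})$ and check the dihedral relation $\alpha\sigma\alpha^{-1}=\sigma^{-1}$. The paper's proof is terser only because the degree and type were already discussed in the paragraphs immediately preceding the proposition, so the formal proof reduces to the Galois-group verification; your write-up is simply a more self-contained version of the same computation, and your explicit genericity condition $a\notin\{0,\tfrac14\}$ and irreducibility check are welcome additions that the paper leaves implicit.
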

\begin{proof}
Il ne reste qu'à vérifier que le groupe de Galois du revêtement est bien $G_{l,2}$. En effet, on trouve les automorphismes $\sigma:(x,t)\mapsto(x,\zeta_lt)$ et $\alpha:(x,t)\mapsto\big(-x,\tfrac1t\big)$ d'ordres respectifs $l$ et 2, tels que\vspace*{-10pt} 
\[\alpha\sigma\alpha^{-1}:(x,t)\mapsto(x,\zeta_l^{-1}t)=\sigma^{-1}(x,t).\qedhere\]
\end{proof}
\vspace*{-4pt}
Grâce à la construction de J. Ellenberg présentée dans la section \hyperref[sec:ellen1]{précédente}, ceci fournit donc une famille à un paramètre, $a$, de courbes à multiplication réelle par $\Q(\zeta_l^+)$, dont on peut mener tous les calculs explicitement. 

En effet, la courbe quotient $Y/\langle\alpha\rangle$ est obtenue  en calculant le polynôme minimal $\Pi(z,x)$ de $z_t:=t+\tfrac1t$, qui doit nécessairement être un polynôme invariant par $x\mapsto-x$ : une équation de $X$ est donnée par le polynôme $Q(z,w)$ tel que $\Pi(z,x)=Q(z,x^2)$. On peut voir $Q$ comme le reste de la division euclidienne en $x$ de $\Pi$ par $w^2-x$.

On retrouve en fait la même famille que dans~\cite{top}. En effet, la courbe $Y$ possède un autre modèle qui rejoint celui des courbes de cet article.

\begin{prop}\label{top:fam2}
La courbe $Y$ est une courbe hyperelliptique de genre $l-1$. Elle peut être donnée, à torsion près, par une équation de type\vspace*{-2pt}
\[y^2=z^{2l}+bz^l+1.\vspace*{-2pt}\]
La courbe $X$ est elle aussi hyperelliptique, donnée par l'équation\vspace*{-3pt}
\[\mu^2=(w+2)(wg_l(w^2-2)+b)\vspace*{-2pt}\]
où $g_l$ est le polynôme minimal de $-\zeta_l-\zeta_l^{-1}$.
\end{prop}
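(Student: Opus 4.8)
The plan is to exhibit the two announced hyperelliptic models by explicit changes of coordinates, and then compute the quotient $X=Y/\langle\alpha\rangle$ by hand. \textbf{Step 1 (hyperelliptic model of $Y$ and its genus).} Cleared of denominators, the defining relation becomes $(t^l-1)x^2-(t^l+1)x+a(t^l-1)=0$, a quadratic equation for $x$ over $\Q(t)$. Hence $[\,k(Y):k(t)\,]=2$, so $(x,t)\mapsto t$ realizes $Y$ as a double cover of $\Pp^1$; since $g(Y)=l-1\ge 2$ (which one also reads off from Riemann--Hurwitz applied to $Y\to Z$, totally ramified of index $l$ over the four roots of $x^2+x+a$ and $x^2-x+a$, for $a\notin\{0,\tfrac14\}$), $Y$ is hyperelliptic. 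Completing the square, put $v:=2(t^l-1)x-(t^l+1)$, so that $v^2=(t^l+1)^2-4a(t^l-1)^2=(1-4a)t^{2l}+(2+8a)t^l+(1-4a)$; twisting by $\sqrt{1-4a}$ (hence ``à torsion près'') and setting $z=t$, $y=v/\sqrt{1-4a}$, $b=\tfrac{2+8a}{1-4a}$ gives $y^2=z^{2l}+bz^l+1$, whose right-hand side has degree $2l$, confirming $g(Y)=l-1$.

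\textbf{Step 2 ($\alpha$ in the new coordinates, and the quotient).} A direct substitution into $v=2(t^l-1)x-(t^l+1)$ shows that $\alpha\colon(x,t)\mapsto(-x,1/t)$ sends $v$ to $v/t^l$, so on the model $y^2=z^{2l}+bz^l+1$ the involution acts by $\alpha\colon(z,y)\mapsto(1/z,\ y/z^l)$. The function $w:=z+1/z$ is $\alpha$-invariant, and so is
\[\mu:=\frac{y\,(z+1)}{z^{(l+1)/2}},\]
as one checks using that $l$ is odd. Since $[\,k(z,y):k(w)\,]=4$ and $[\,k(X):k(w)\,]=2$, the pair $(w,\mu)$ generates $k(X)$. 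Using $z^{2l}+bz^l+1=z^l(z^l+z^{-l}+b)$ and $(z+1)^2/z=w+2$ one gets
\[\mu^2=\frac{(z^{2l}+bz^l+1)(z+1)^2}{z^{l+1}}=(w+2)\bigl(z^l+z^{-l}+b\bigr).\]

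\textbf{Step 3 (the polynomial identity $z^l+z^{-l}=w\,g_l(w^2-2)$).} Write $g_l(X)=\prod_{j=1}^{(l-1)/2}\bigl(X+\zeta_l^{\,j}+\zeta_l^{-j}\bigr)$, the minimal polynomial of $-\zeta_l-\zeta_l^{-1}$. Since $w^2-2=z^2+z^{-2}$ and $-\zeta_l^{\,j}-\zeta_l^{-j}=(-\zeta_l^{\,j})+(-\zeta_l^{\,j})^{-1}$, each factor of $g_l(w^2-2)$ equals $z^{-2}(z^2+\zeta_l^{\,j})(z^2+\zeta_l^{-j})$, whence
\[g_l(w^2-2)=z^{-(l-1)}\prod_{i=1}^{l-1}(z^2+\zeta_l^{\,i}).\]
Because $l$ is odd, the elements $-\zeta_l^{\,i}$ $(0\le i\le l-1)$ are exactly the roots of $X^l+1$, so $\prod_{i=0}^{l-1}(z^2+\zeta_l^{\,i})=z^{2l}+1$; dividing out the $i=0$ factor $z^2+1$ and multiplying by $w=(z^2+1)/z$ yields $w\,g_l(w^2-2)=z^{-l}(z^{2l}+1)=z^l+z^{-l}$. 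Substituting into Step 2 gives $\mu^2=(w+2)\bigl(w\,g_l(w^2-2)+b\bigr)$; the right-hand side has degree $l+1$, squarefree for generic $b$, hence this affine curve has genus $\lfloor l/2\rfloor=(l-1)/2=g(X)$, and together with $k(w,\mu)=k(X)$ this identifies it as a model of $X$.

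The only step requiring genuine care is Step 3, and within it the observation that $g_l$ must be evaluated at $w^2-2=z^2+z^{-2}$ rather than at $w$ — that is what makes the odd Chebyshev-type polynomial $z^l+z^{-l}$ factor through $g_l$. Everything else is routine bookkeeping with the change of variables; one should, however, record the exceptional parameters $a\in\{0,\tfrac14\}$ (equivalently $b\in\{\pm2\}$), for which the cover or the models degenerate.
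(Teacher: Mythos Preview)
Your proof is correct and follows essentially the same route as the paper: the same change of variables $v=2(t^l-1)x-(t^l+1)$ to reach the model $y^2=z^{2l}+bz^l+1$, the same invariants $w=z+z^{-1}$ and $\mu=y(z+1)/z^{(l+1)/2}$, and the same key identity $z^l+z^{-l}=w\,g_l(w^2-2)$. The paper simply states this last identity without proof, so your Step~3 (factoring $g_l(w^2-2)$ over the roots and using $\prod_{i}(z^2+\zeta_l^{\,i})=z^{2l}+1$) fills in a detail the original elides, and your degree count $[k(z,y):k(w)]=4$ makes explicit why $(w,\mu)$ actually generates $k(X)$, which the paper leaves implicit.
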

\begin{proof}
Il s'agit simplement d'un changement de variables,\vspace*{-3pt}
\[\Big\{\begin{array}{l@{}l}
\\[-22pt]
z&\,=\,t\\[-3pt]
y'&\,=\,2(t^l-1)x-(t^l+1),
\end{array}\vspace*{-2pt}
\]
qui aboutit au modèle hyperelliptique $y'^2=(1-4a)(z^{2l}+2\tfrac{1+4a}{1-4a}z^l+1)$ et, quitte à prendre une racine carrée de $1-4a$ et à poser $b=\tfrac{1+4a}{1-4a}$ comme nouveau paramètre, on obtient la courbe hyperelliptique de genre $l-1$\vspace*{-1pt}
\[y^2=z^{2l}+bz^l+1.\vspace*{-1pt}\]
\noindent
Le groupe de Galois sur ce modèle est réalisé par les éléments notés comme avant\vspace*{-3pt} 
\begin{align*}
\sigma:(z,y)&\mapsto(\zeta_lz,y)\\
\alpha:(z,y)&\mapsto\big(\tfrac1{z},\tfrac{y}{z^l}\big).\\[-20pt]
\end{align*}
\noindent
Le reste de la démonstration est identique à~\cite{top} : on pose $\omega=z+\tfrac1{z}$ et on a\vspace*{-2pt}
\begin{align*}
z^{2l}+1&=z^l\big(z+\frac1z\big)g_l\big(z^2+\frac1{z^2}\big)\\
\omega+2&=\frac{(z+1)^2}z
\end{align*}
qui permet d'écrire $y^2=z^l(b+\omega g_l(\omega^2-2))$ puis\vspace*{-2pt} \[z^l=\Big(\frac{z^{\frac{p+1}2}}{z+1}\Big)^2(\omega+2)\]
pour en tirer finalement, après avoir posé $\mu=\tfrac{y(z+1)}{z^{\frac{l+1}2}}$ que
\[\mu^2=(\omega+2)\big(b+\omega g_l(\omega^2-2)\big)\]
et que les fonctions $\mu(z,y)$ et $\omega(z)$ sont invariantes par $\alpha$, ce qui assure que l'on a trouvé un modèle de la courbe quotient $X=Y/\langle\alpha\rangle$.
\end{proof}

On peut vérifier simplement, comme dans~\cite{top}, que la courbe quotient $X=Y/\langle\alpha\rangle$ est bien à multiplication réelle par $\Q(\zeta_l^+)$, en vérifiant\footnote{On\,note\,toujours\,$[\sigma]$\,l'endomorphisme\,de\,la jacobienne,\,issu\,d'un\,automorphisme\,$\sigma$\,de\,la\,courbe.}  que $[\sigma]+[\sigma^{-1}]$ est bien un endomorphisme de la jacobienne du quotient : cela suffit puisque l'on est alors assuré d'avoir le corps totalement réel  $\Q(\zeta_l^+)$ dans $\End_\Q(\Jac(X))$ d'une part et que d'autre part, l'expression explicite de la courbe hyperelliptique $X$ assure qu'elle est de genre $\tfrac{l-1}2$, étant définie par un polynôme de degré $l+1$.

\subsection{Type (0,1,1,1,1)}\label{type:01111}
On cherche, comme précédemment, un revêtement de $\Pp^1$ ramifié en 4 points d'ordre 2. On trouve ici les courbes de genre 1 données par une équation de \WS, famille à un paramètre, avec le revêtement qui \og oublie \fg{} les ordonnées du modèle de \WS. En munissant $E$ de son point à l'infini, on en fait une courbe elliptique sur laquelle on veut un revêtement ramifié en seulement un point, de groupe $\Zz l$. Il faut, de plus, que le groupe de Galois du revêtement total soit le groupe diédral. On cherche alors une fonction sur $E$ qui, de façon similaire au cas précédent, soit changée en son opposée par l'involution hyperelliptique.

\begin{prop}
Il existe une famille rationnelle sur $\Q$, à deux paramètres, de courbes à multiplication réelle par $\Q(\zeta_l^+)$, issue du type $(0,1,1,1,1)$.
\end{prop}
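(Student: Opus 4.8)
Je suivrais la stratégie exposée à la fin de la section~\ref{sec:ellen1}, déjà employée au type~\ref{type:0011} : réaliser le revêtement $Y\to\Pp^1$ de groupe $G_{l,2}$ par la tour $Y\to Z\to\Pp^1$, où $Z=E$ est une courbe elliptique (munie de son point à l'infini $O$ comme origine, l'involution hyperelliptique $\iota$ étant alors $P\mapsto -P$), où $Z\to\Pp^1$ \og oublie \fg{} l'ordonnée (ramifié en les quatre points de $2$-torsion) et où $Y\to E$ est cyclique de degré $l$. La monodromie fixe la forme du second étage : dans un revêtement de groupe $G_{l,2}$ de type $(0,1,1,1,1)$, les quatre points de type $1$ sont les quatre points de branchement de $Z\to\Pp^1$, et le point de type $0$ est l'unique point de $\Pp^1$ au-dessus duquel $Y\to E$ est ramifié, ses deux antécédents dans $E$ — nécessairement échangés par $\iota$ — portant chacun une ramification totale d'ordre $l$. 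Comme au type~\ref{type:0011}, pour que le groupe de Galois total soit diédral (et non cyclique) il faut et il suffit que la fonction $f$ définissant $Y\colon t^l=f$ soit \og changée en son inverse \fg{} par $\iota$, \cad que $\div(f)$ soit anti-invariant par $\iota$ ; il reste à choisir $f$ de sorte que le lieu de ramification de $Y\to E$ — les points où $\ord(f)\not\equiv0\pmod l$ — soit réduit à une seule $\iota$-paire $\{Q_1,-Q_1\}$, hors de $E[2]\cup\{O\}$.

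La principale difficulté est la réalisation sur $\Q$. La classe de $[Q_1]-[-Q_1]$ dans $\Pic^0(E)\simeq E$ vaut $2Q_1$ ; or la principalité de $\div(f)$ impose $2Q_1\in lE(\Q)$, ce qui n'a aucune raison d'être vérifié en général. Je contournerais l'obstruction en partant d'un point rationnel $R\in E(\Q)$ quelconque et en posant $Q_1:=lR$ : on dispose alors sur $\Q$ d'une fonction $f$ de diviseur $\div(f)=[lR]-[{-lR}]+l[{-R}]-l[R]$ (degré $0$, défini sur $\Q$, principal et anti-invariant par $\iota$), dont le lieu de ramification est exactement $\{lR,-lR\}$. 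Comme $\div(f)+\iota^*\div(f)=0$, le produit $f\cdot\iota^*f$ est une constante $c\in\Q^*$ ; quitte à remplacer $f$ par $\lambda f$ avec $\lambda\in\Q^*$ tel que $\lambda^2c\in(\Q^*)^l$ (possible car $\pgcd(2,l)=1$), on a $f\cdot\iota^*f=c'^l$ avec $c'\in\Q^*$.

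Les vérifications deviennent alors mécaniques. Le groupe de Galois de $Y\to\Pp^1$ est $G_{l,2}$ : outre $\sigma\colon(x,y,t)\mapsto(x,y,\zeta_l t)$ d'ordre $l$, la relation $c'^l=f\,\iota^*f$ fournit l'automorphisme $\alpha\colon(x,y,t)\mapsto(x,-y,c'/t)$, défini sur $\Q$, d'ordre $2$, vérifiant $\alpha\sigma\alpha^{-1}=\sigma^{-1}$ ; comme $[\Q(Y):\Q(x)]=2l$, ils engendrent $G_{l,2}$ tout entier. Le revêtement est de type $(0,1,1,1,1)$ : aux quatre points de $2$-torsion de $E$, $f$ est d'ordre divisible par $l$ — son diviseur est porté par $lR,-lR,-R,R$, qui évitent $E[2]\cup\{O\}$ pour $R$ générique —, donc $Y\to E$ y est non ramifié et la monodromie de $Y\to\Pp^1$ y est d'ordre $2$ ; et $Y\to E$ est ramifié d'ordre $l$ en $lR$ et $-lR$, qui s'envoient sur le même point $x(lR)\in\Pp^1$, distinct des quatre précédents. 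La formule d'\Hu donne $g(Y)=l$ ; puis, $\alpha$ agissant comme une réflexion sur chacune des quatre fibres de $Y\to E$ au-dessus des points $\iota$-fixes de $E$ (conséquence de $\alpha\sigma\alpha^{-1}=\sigma^{-1}$, d'où exactement un point fixe par fibre car $l$ est impair), on trouve $g(X)=\tfrac{l-1}2$. Le théorème d'Ellenberg (\cf la table~\ref{table:types}) s'applique à $X=Y/\langle\alpha\rangle$ et fournit $\Q(\zeta_l^{(2)})=\Q(\zeta_l^+)\hookrightarrow\End_\Q(\Jac(X))$ ; comme $[\Q(\zeta_l^+):\Q]=\tfrac{l-1}2=g(X)$, il s'agit de multiplication réelle par un corps de degré maximal.

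Il resterait à tout expliciter. La donnée $(E,R)$ — une courbe elliptique munie d'un point — forme une famille rationnelle à deux paramètres sur $\Q$ (par exemple $R=(0,0)$ et $E\colon y^2+a_1xy+a_3y=x^3+a_2x^2+a_4x$, modulo les isomorphismes fixant $O$ et $R$), et $x(lR)$ ainsi que les coordonnées de $R$ figurant dans $f$ s'expriment rationnellement via les polynômes de division. Une équation plane de $X$ s'obtient en calculant le polynôme minimal sur $\Q(x)$ de la fonction $\alpha$-invariante $t+c'/t$, puis en lui adjoignant la fonction $\alpha$-invariante $y\,(t-c'/t)$ pour redescendre depuis $\Q(Y)$ ; pour $l=7$, $g(X)=3$ et l'on obtient une quartique plane. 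Le calcul effectif de $f$, puis cette élimination, sont la partie la plus lourde ; le reste est formel une fois reconnu que la seule contrainte sur $\div(f)$ est l'anti-invariance — sans condition de torsion sur $Q_1$, pourvu qu'on prenne $Q_1=lR$ —, ce qui laisse le champ libre au second paramètre.
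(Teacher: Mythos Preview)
Your approach is correct and essentially identical to the paper's: both build the tower $Y\to E\to\Pp^1$ from a pair $(E,R)$ with $R\in E(\Q)$ generic, set $Q=lR$, and realize $Y$ as the Kummer cover $t^l=f$ for a function whose divisor is anti-invariant under $\iota$ and supported on $\{\pm Q,\pm R\}$ (with multiplicity $l$ at $\pm R$), then check the dihedral Galois group via $\sigma$ and $\alpha$. The paper streamlines one step by writing the function as $\phi(x,y)/\phi(x,-y)$ where $\phi$ has divisor $(-Q)+l(R)-(l+1)O_E$, which forces your constant $c'$ to be $1$ and bypasses the rescaling argument.
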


\begin{proof}
Commençons par considérer un point générique $P\in E$ et posons $Q=lP$. Le diviseur sur $E$, de degré 0,\[D=(Q)-(-Q)+l((P)-(-P))\]  est   
principal car $Q-(-Q)+lP-l(-P)=O_E$. C'est le diviseur d'une fonction dont on souhaite, comme avant, qu'elle soit changée en son opposée par l'involution elliptique $\imath$. Pour cela, en concentrant les pôles à l'infini, on écrit $D=D'+\imath(D')$ avec
\[D'=(-Q)+l(P)-(l+1)O_E.\]
Pour les mêmes raisons que $D$, le diviseur $D'$ est principal, et soit $\phi(x,y)=a(x)+yb(x)$ une fonction de diviseur $D'$. Alors, le diviseur de $\tfrac{\phi(x,y)}{\phi(x,-y)}$ est $D$. On considère ensuite le revêtement de $E$ par la courbe $Y$ définie par l'équation de $E$ ainsi que 
\[t^l=\frac{\phi(x,y)}{\phi(x,-y)}.\]
Le revêtement est donné par l'application $(x,y,t)\mapsto(x,y)$ qui n'est ramifié qu'en $Q$ et $-Q$ car la ramification en $P$ et $-P$ est tuée par le facteur $l$ dans le diviseur de la fonction $\phi$. Ainsi, le revêtement cherché de $Y$ sur $\Pp^1$ est donné par $(x,y,t)\mapsto x$.

Il reste à déterminer le groupe de Galois. On a déjà l'involution hyperelliptique, qui \og monte \fg{}, par construction, sur $Y$ ainsi que les racines $l$-ièmes agissant $t$ :
\begin{align*}
\alpha:(x,y,t)&\mapsto(x,-y,\tfrac 1t)\\
\sigma:(x,y,t)&\mapsto(x,y,\zeta_l y)
\end{align*} 
Comme précédemment, on vérifie sans difficulté que $\alpha\sigma\alpha^{-1}=\sigma^{-1}$ si bien que le groupe diédral est bien le groupe de Galois de ce revêtement.

Le choix de $E$ et de $P\in E$ point générique constitue les deux paramètres de la familles recherchée. Sa rationalité est donnée dans l'algorithme~\ref{algo:mult:reelle2} ci-après. 
\end{proof}

On peut, comme dans le cas précédent, mener les calculs pour obtenir des familles à 2 paramètres. Du moins, on a un algorithme facile permettant, une fois donné $l$, d'écrire une équation de $X=Y/\langle\alpha\rangle$. Pour cela, on part d'une équation générale de courbe elliptique $E$ de la forme $y^2=x^3+ux^2+vx+w^2$, possédant de plus le point rationnel \og générique \fg{}  $P=(0,w)$. On obtient les deux paramètres de la famille par homogénéité de l'équation $E$ ; le changement de paramètres rationnels
\[%
u=\tau s,\quad v=\tau^2r,\quad w^2=\tau^3r\quad\textrm{ et}\quad x=\tau x'%
\]
élimine la variable $\tau$ et donne une famille de courbes en $x',z$ à 2 paramètres, $r,s$.

\begin{algo}[H]
\caption{Courbes à multiplication réelle par $\Q(\zeta_l^+)$ (I)}
\label{algo:mult:reelle}
\noindent\textbf{Entrée} : $l$
\begin{algorithmic}[1]
\State{$E(x,y)\leftarrow y^2=x^3+ux^2+vx+w^2$}
\State{$P\leftarrow(0,v)$ et $Q\leftarrow lP$}
\State{\textbf{Calculer}, à l'aide de \RR, une fonction $\phi(x,y)$ telle que\vspace*{-10pt} 
\[\div(\phi)=(-Q)+l(P)-(l+1)O_E.\]}\vspace*{-21pt}
\State{$\Pi(x,y,z)\leftarrow\Res_t(\phi(x,-y)t^l-\phi(x,y),t^2+1-tz)$}\label{algo:mult:reelle:ligne5}
\State{$X(x,z)\leftarrow \Pi(x,y,z)\mod\!_y\  E(x,y)$}\label{algo:mult:reelle:ligne6}
\end{algorithmic}
\noindent\textbf{Sortie} $X$.
\end{algo}
Avant de donner des exemples, il faut faire quelques remarques sur cet algorithme. Comme précédemment, la ligne~\ref{algo:mult:reelle:ligne5} permet de calculer le polynôme minimal de $z_t:=t+\tfrac1t$ à l'aide d'un calcul classique d'élimination par résultant. Enfin, il est à préciser que dans la ligne~\ref{algo:mult:reelle:ligne6}, ce résultant, modulo l'équation de $E$, ne dépend plus de $y$, puisque $y\mapsto-y$ change $t$ en $\tfrac1t$, et laisse donc $z_t$ invariant.
\begin{expl}[Courbes à multiplication réelle par $\Q(\zeta_5^+)$ et $\Q(\zeta_7^+)$] Voici deux exemples de familles à deux paramètres fournies par l'algorithme~\ref{algo:mult:reelle}.
\begin{itemize0}
\item[\qquad\bf$\mathbold{l=}$\,5]\label{q5plus} On a $\Q(\zeta_5^+)=\Q(\sqrt 5)$ et $X$ est une courbe de genre 2, hyperelliptique. L'algorithme ci-dessus et le changement de variables $x\mapsto\tfrac1x$ fournissent l'équation\vspace*{-10pt}
\[z(z^{4}-5z^{2}+5)=\frac1{(d_1x+d_0)}\sum\limits_{i=0}^6c_ix^i,
\]
où on a l'identité polynomiale $z^4-5z^2+5=g_5(z^2-2)$, avec $g_5$  défini en~\ref{top:fam2}. L'équation ci-dessus est à variables séparées de la forme $g(z)=h(x)$, avec la relation sur les différentielles\vspace*{-5pt}
\[g'(z)\D z=h'(x)\D x.\]
On peut alors calculer une base de différentielles holomorphes $(\omega_1\D x,\omega_2\D x)$ puis poser un nouveau paramètre $X=\tfrac{\omega_2}{\omega_1}$, qui nous permet de calculer $\D X=\bigl(\tfrac{\partial X}{\partial x}+\tfrac{\partial X}{\partial z}\tfrac{h'(x)}{g'(z)}\bigr)\D x$, et de poser $Y=\tfrac{\D X}{\omega_1\D x}$. On écrit alors\vspace*{-15pt}\[Y^2=\sum_{i=0}^6a_iX^i,\]
on réduit modulo l'équation de départ, puis on résout les équations linéaires en les $a_i$. Il est pratique de poser $s=r+t$ et on obtient une famille à deux paramètres sous forme hyperelliptique donnée par les coefficients $a_i$,
dont les coefficients sont donnés par
{\footnotesize
\begin{align*}
a_{{0}}&=\frac{{100{t}^{2}{r}^{3}+5{r}^{3}-36t{r}
^{3}-8{t}^{3}{r}^{2}-40{t}^{5}{r}^{2}+36{r}^{2}{t}^{4}-128{t}^
{3}{r}^{3}-20{t}^{6}r-12{t}^{8}r+64{r}^{3}{t}^{4}+48{t}^{7}r-8
{t}^{9}}}{r},\\a_{{1}}&=20(-5{t}^{8}-8{t}^{7}r+28{t}^{6}r-12r{t}^{5}-24{r}^{2}{t}^{4}+
16{t}^{3}{r}^{3}+24{t}^{3}{r}^{2}-24{t}^{2}{r}^{3}-6{t}^{2}{r}
^{2}+12t{r}^{3}-2{r}^{3}),\\a_{{2}}&=a_4=0,
\\a_{{3}}&=10( {r}^{2}-4t{r}^{2}+4{t}^{3}r+4{t}^{2}{r}^{2
}-10r{t}^{4}+2{t}^{6}+4r{t}^{5}) {r}^{2},\\a_{{5}
}&=4(6{r}^{5}{t}^{2}-{t}^{4}{r}^{4}-4{r}^{5}{t}^{3}-2t{r}^{5}),\\a_{{6}}&=
r^6-4r^6t+4r^6t^2
\end{align*}
}%
Enfin, comme on a  deux paramètres en genre 2, cette famille contient donc, à rationalité près, toutes les courbes de genre 2 à multiplication réelle par $\Q(\zeta_5^+)$, qui sont, par exemple, données dans~\cite{mestrehyp} sous une forme hyperelliptique plus simple.
\item[\qquad\bf$\mathbold{l=}$\,7] Cette fois, le corps $\Q(\zeta_7^+)=\Q(\cos(\tfrac{2\pi}7))$ est de degré 3 et l'on obtient une famille à deux paramètres de courbes de genre 3 à multiplication réelle par ce corps. Elle est donnée, de façon similaire, par une équation de la forme
\[z(z^6-7z^4+14z^2-7)=\frac1{(d_1x+d_0)}\sum\limits_{i=0}^8c_ix^i.\]
Cette famille est différente des familles exposées dans~\cite{mestrehyp} car on peut déterminer une équation quartique
 à deux paramètres. Comme ci-dessus, le changement de variable $s=r+t$ diminue fortement la taille de l'équation et permet de calculer une base de différentielles, par exemple avec la commande \texttt{differentials} du paquet \texttt{algcurves} de Maple~\cite{maple}. On obtient l'équation générale en $U$ et $V$ avec deux paramètres $r$ et $t$.

{\relpenalty=0000
\binoppenalty=0000
\footnotesize
\noindent$- ( 2tr-r-{t}^{3} )  ( -{r}^{3}-40{t}^{2}{r}^{3}+
10t{r}^{3}-80{r}^{3}{t}^{4}+80{t}^{3}{r}^{3}+32{r}^{3}{t}^{5}+
16{r}^{2}{t}^{7}-56{r}^{2}{t}^{6}+64{r}^{2}{t}^{5}-30{r}^{2}{t
}^{4}+5{r}^{2}{t}^{3}+12r{t}^{8}-16r{t}^{7}+5r{t}^{6}+{t}^{9}
 ) +2r ( 2t-1 ) ^{2} ( {r}^{2}-4t{r}^{2}+4
{t}^{3}r+4{r}^{2}{t}^{2}-10r{t}^{4}+2{t}^{6}+4r{t}^{5}
 ) U+{r}^{2} ( 2t-1 ) ^{4}U^{2}-r ( 2t-1
 ) ^{2}U^{3}+r ( 2t-1 ) ^{2} ( -2{r}^{3}-6
{r}^{2}{t}^{2}+12t{r}^{3}+24{r}^{2}{t}^{3}-24{t}^{2}{r}^{3}-12
r{t}^{5}-24{r}^{2}{t}^{4}+16{t}^{3}{r}^{3}+28r{t}^{6}-5{t}^{
8}-8r{t}^{7} ) V-2{t}^{2} ( {t}^{3}+2r-6tr+4{t}^{
2}r ) ^{2}UV-t ( {t}^{3}+2r-6tr+4{t}^{2}r ) 
 ( {r}^{2}-4t{r}^{2}+4{t}^{3}r+4{r}^{2}{t}^{2}-10r{t}^{4}
+2{t}^{6}+4r{t}^{5} ) V^{2}-2rt ( 2t-1 ) ^{
2} ( {t}^{3}+2r-6tr+4{t}^{2}r ) UV^{2}+t ( {t}
^{3}+2r-6tr+4{t}^{2}r ) U^{2}V^{2}+2r ( 2t-1
 ) ^{2} ( {r}^{2}-4t{r}^{2}+4{t}^{3}r+4{r}^{2}{t}^{2}
-10r{t}^{4}+2{t}^{6}+4r{t}^{5} ) V^{3}+ ( {r}^{2}-4
t{r}^{2}+4{t}^{3}r+4{r}^{2}{t}^{2}-10r{t}^{4}+2{t}^{6}+4r{
t}^{5} ) UV^{3}+{t}^{2} ( {t}^{3}+2r-6tr+4{t}^{2}r
 ) ^{2}V^{4}.$
}%

Finissons cet exemple par une courbe spéciale dans cette famille, obtenue en fixant au début $u=v=0$ ; on obtient dans un premier temps la courbe
\[x^6z(z^6-7z^4+14z^2-7)=2x^6+4x^3+1\]
qui a un modèle quartique donné par l'équation 
\[v^3(2u-1)-uf(u)=0,\]
où $f(u)=u^3+2u^2-u-1$ est le polynôme minimal de $(\zeta_7+\zeta_7^{-1})^{-1}$. Notons que sur les deux modèles, il est facile de voir que l'anneau des endomorphismes de cette courbe contient $\Q(\zeta_3)=\Q(i\sqrt3)$, par $(x,z)\mapsto(\zeta_3x,z)$, ou $(u,v)\mapsto(\zeta_3u,v)$ sur le modèle quartique. Ceux-ci proviennent de la courbe elliptique $y^2=x^3+1$. Comme cet automorphisme sur $Y$ commute avec $\sigma$ et $\alpha$, on en déduit que $X$ est à multiplication complexe par $\Q(\zeta_7^+,i\sqrt3)$.
\end{itemize0}

\end{expl} 
\subsection{Type (1,1,1,1,1,1)}\label{type:111111}
Un tel revêtement est donné par une courbe de genre 2, disons \[y^2=x(x-1)(x-\lambda_1)(x-\lambda_2)(x-\lambda_3),\] que l'on note $H$. Le revêtement de degré 2 ramifié en les  6 points de \WS est bien-entendu $(x,y)\mapsto x$.

\begin{prop}
Il existe une famille de courbes à trois paramètres, \og explicite \fg{}, à multiplication réelle par $\Q(\zeta_l^+)$.
\end{prop}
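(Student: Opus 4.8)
The plan is to run the chapter's two-step construction for the remaining $n=2$ type. Since $(1,1,1,1,1,1)$ has no zero, I must produce a genus-$2$ curve $H$ — playing the rôle of $Z$ — with its hyperelliptic map $H\to\Pp^1$, which is branched at the $6$ Weierstrass points, together with an \emph{unramified} connected cyclic cover $Y\to H$ of degree $l$, such that $Y\to\Pp^1$ is Galois of group $G_{l,2}$ and of type $(1,1,1,1,1,1)$; Ellenberg's theorem then gives that $X=Y/\langle\alpha\rangle$, with $\alpha$ the lift of the hyperelliptic involution, has real multiplication by $\Q(\zeta_l^{(2)})=\Q(\zeta_l^+)$.

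Two observations make the monodromy conditions automatic, reducing everything to an arithmetic problem. First, on a hyperelliptic curve the hyperelliptic involution $\imath$ acts as $-1$ on the Jacobian, so every class $D\in\Jac(H)[l]$ satisfies $\imath^*D=-D$: the cyclic $\Zz l$-cover attached to a non-zero $D$ is therefore sent by $\imath$ to its conjugate, $\imath$ lifts to an involution $\alpha$ of $Y$ with $\alpha\sigma\alpha^{-1}=\sigma^{-1}$, and since $D\neq0$ keeps $Y$ connected the group $\langle\sigma,\alpha\rangle$ is the dihedral group $G_{l,2}$. Second, the monodromy of $Y\to\Pp^1$ over any branch point (the image of a Weierstrass point of $H$) projects to the non-trivial element of $\Gal(H/\Pp^1)=\Zz2$, hence is of the form $\sigma^j\alpha$, and $(\sigma^j\alpha)^2=\alpha^2=1$; thus each of the $6$ branch points has $d_i=1$, i.e. the cover is automatically of type $(1,1,1,1,1,1)$. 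So it only remains to produce, in a three-parameter family, a genus-$2$ curve $H:y^2=f(x)$ (with $\deg f=5$, $\infty$ a Weierstrass point) carrying a non-zero $l$-torsion divisor class, together with an explicit function realising $l$ times it.

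I would look for this in the shape $Y:\{y^2=f(x),\ t^l=\tfrac{A(x)+yB(x)}{A(x)-yB(x)}\}$, with $\sigma:(x,y,t)\mapsto(x,y,\zeta_l t)$ and $\alpha:(x,y,t)\mapsto(x,-y,\tfrac1t)$ — mirroring the shapes $t^l=P(x)/P(-x)$ and $t^l=\phi(x,y)/\phi(x,-y)$ of the previous subsections. Computing the divisor of $w=(A+yB)/(A-yB)$ on $H$ shows that the cover $t^l=w$ is unramified over $H$ exactly when $A(x)^2-f(x)B(x)^2$ is, up to a constant, an $l$-th power of a polynomial; taking that polynomial linear forces $\deg B=\tfrac{l-5}2$ and $\deg A\le\tfrac{l-1}2$, and after a translation of $x$ and a rescaling the identity becomes
\[f(x)=\frac{A(x)^2+x^l}{B(x)^2},\qquad B\ \text{monic of degree }\tfrac{l-5}2,\ \deg A\le\tfrac{l-1}2,\ B(x)^2\mid A(x)^2+x^l.\]
One solves $A^2\equiv-x^l\pmod{B^2}$ by choosing $B$ and then Hensel-lifting / taking a Padé approximant of $\sqrt{-x^l}$ modulo $B^2$ — a linear-algebra step over a field in which $-\delta$ is a square for each root $\delta$ of $B$; a coefficient count (the choices of $B$ and of a degree-$\le\tfrac{l-1}2$ representative of $\sqrt{-x^l}$ modulo $B^2$, minus the residual $x$-translation) leaves exactly three free parameters, and for $l=5$ one gets $y^2=x^5+(ax^2+bx+c)^2$, the family of Brumer. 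Finally $X=Y/\langle\alpha\rangle$ is computed as in the algorithms above, by eliminating $t$ from the minimal polynomial of the $\alpha$-invariant $z_t=t+\tfrac1t$ (a resultant in $t$ followed by reduction modulo the equation of $H$); real multiplication by $\Q(\zeta_l^+)$ then follows from Ellenberg's theorem, and one checks by Riemann--Hurwitz that $g(X)=\tfrac{l-1}2$.

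The one real obstacle is the explicit resolution of $A^2-fB^2=(\text{linear})^l$, equivalently of the congruence $A^2\equiv-x^l\pmod{B^2}$, in a genuine three-parameter family: this amounts to writing down equations for a unirational component of the threefold over $\mathcal M_2$ parametrising genus-$2$ curves with a cyclic subgroup of order $l$ in the Jacobian, the elimination grows with $l$, and the square roots $\sqrt{-\delta}$ force a finite extension of $\Q$ (or a finite field for the larger $l$) — which is why the family is only ``explicit'' in quotes. The remaining ingredients — the lifting of $\imath$, the verification of the type, the Padé/Hensel computation of $A$ and $B$, and the quotient by $\alpha$ — are routine and entirely parallel to the cases already treated.
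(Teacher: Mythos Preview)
Your high-level strategy is exactly the paper's: take a genus-$2$ hyperelliptic curve $H$, build an unramified cyclic degree-$l$ cover $Y\to H$ from a non-zero $l$-torsion class on $\Jac(H)$, and observe that the hyperelliptic involution $\imath$ (which is $-1$ on $\Jac(H)$) lifts to $Y$ and produces the dihedral group $G_{l,2}$; the type is then automatically $(1,1,1,1,1,1)$. Up to that point you and the paper agree.

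Where you diverge is in the choice of the $l$-torsion class. The paper's proof is much shorter than yours: it simply takes the \emph{generic} curve $y^2=x(x-1)(x-\lambda_1)(x-\lambda_2)(x-\lambda_3)$, picks \emph{any} non-zero $P\in\Jac(H)[l]$ over the algebraic closure of $\Q(\lambda_1,\lambda_2,\lambda_3)$, and sets $t^l=\phi(x,y)/\phi(x,-y)$ with $\div(\phi)=l\Div(P)$ in Mumford representation. The three parameters are $\lambda_1,\lambda_2,\lambda_3$ themselves --- genuinely three moduli, since two Weierstrass points have been pinned at $0$ and $1$.

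You instead restrict to torsion classes of the special shape $[(P)-(W)]$ (one point minus a Weierstrass point), which is what forces $A^2-fB^2$ to be a \emph{linear} $l$-th power rather than a quadratic one. That is precisely the device the paper introduces \emph{after} the proof, to manufacture explicit families over $\Q$ via Pad\'e approximants --- and there it yields one- and two-parameter families, not three. The reason is geometric: the condition that some non-zero $l$-torsion point lie on the Abel--Jacobi image $\Theta_W\subset\Jac(H)$ is a divisorial condition on $\mathcal M_2$ (for generic $H$ one has $\Theta_W\cap\Jac(H)[l]=\{0\}$), so the locus you parametrise is only $2$-dimensional in moduli. Concretely, your ``three free parameters'' still carry the scaling $x\mapsto\lambda x$ (you used the translation to put the linear factor at $x=0$, but not the dilation), and quotienting by it leaves two. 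The Brumer family $y^2=x^5+(ax^2+bx+c)^2$ you recover for $l=5$ is indeed two-dimensional in $\mathcal M_2$.

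To recover the full three-parameter statement you should drop the ``linear'' hypothesis: allow a generic $l$-torsion class $P=(P_1)+(P_2)-D_\infty$, so that the norm $\phi\bar\phi$ is a \emph{quadratic}$^l$ and $\phi$ is obtained (as in the paper) from Riemann--Roch on $H$ rather than from a Pad\'e identity on $\Pp^1$. With that change your argument becomes the paper's proof.
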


\begin{proof}
On cherche un revêtement non ramifié de la courbe hyperelliptique $H$, sur lequel agit le groupe diédral $G_{l,2}$.

Pour cela, on considère $P\in\Jac(H)$ de $l$\ti torsion, défini dans la clôture algébrique $k$ de $\Q(\lambda_1,\lambda_2,\lambda_3)$. Ainsi, par définition le diviseur de $lP$, en représentation de Mumford, est un diviseur principal : c'est le diviseur d'une fonction $\phi(x,y)$. 
Comme dans le cas précédent, il nous suffit ensuite de considérer la courbe $Y$ donnée par les équations
\[
(Y)\ \Bigg\{\begin{array}{l@{\,=\,}l}
y^2&x(x-1)(x-\lambda_1)(x-\lambda_2)(x-\lambda_3)\\
t^l&\frac{\phi(x,y)}{\phi(x,-y)}
\end{array}
\]
dont le recouvrement de $H$ par $(x,y,t)\mapsto(x,y)$ est non ramifié, de degré $l$.

On a de manière analogue aux deux cas précédents, les morphismes
\begin{align*}
\alpha:(x,y,t)&\mapsto(x,-y,\tfrac 1t)\\
\sigma:(x,y,t)&\mapsto(x,y,\zeta_l t)
\end{align*}
qui engendrent le groupe diédral $G_{l,2}$. Encore une fois, pour donner une équation de la courbe quotient $X=Y/\langle\alpha\rangle$, on commence par calculer le polynôme minimal en $z$ de $z_t:=t+\tfrac1t$ dans $k(x,y)$. Le résultat est invariant par $y\mapsto-y$ et ainsi, modulo l'équation de $H$, on obtient un polynôme en $x$ et $z$ qui est une équation plane de la courbe $X$.
\end{proof}

Cette démonstration donne, comme dans le cas précédent, un algorithme de calcul d'une équation plane de $X$, en tout point similaire à l'algorithme~\ref{algo:mult:reelle}, à la différence que l'on doit trouver un élément $P\in\Jac(H)$ de $l$\ti torsion.

Pour cela, on peut faire comme dans le cas des courbes elliptiques, où, partant d'un point \og générique \fg{} $(x,y)$, on applique successivement la loi d'addition. On aboutit en fait aux polynômes de division (que l'on peut calculer plus efficacement) dont les racines sont les points de $l$\ti torsion.  En genre 2, on peut faire la même chose, en utilisant la représentation de Mumford des points de  la jacobienne. Ainsi, on choisit de représenter un point $P$ de la jacobienne de la courbe hyperelliptique $H$, par deux points $P_1,P_2$ de la courbe $H$, et de l'identifier au diviseur 
\begin{equation}\label{div:jac}
\Div(P):=(P_1)+(P_2)-D_\infty,
\end{equation} où $D_\infty=2(P_\infty)$, s'il n'y a qu'un point à l'infini, ou $D_\infty=(P_{\infty,1})+(P_{\infty,2})$ sinon. Ensuite, on applique les lois d'addition sur la jacobienne, de façon similaire au genre~1. Néanmoins, même si tous les calculs sont effectifs, cet algorithme reste théorique au sens où la puissance de calcul nécessaire reste, à l'heure actuelle, insuffisante.

\paragraph{Familles de courbes hyperelliptiques à deux paramètres}

Dans le cas où la jacobienne de la courbe de genre 2 n'est pas simple et que l'on a un morphisme vers une courbe elliptique, J.-F. Mestre, dans~\cite{mestrehyp}, donne une famille à deux paramètres de courbes hyperelliptiques à multiplication réelle par $\Q(\zeta_l^+)$. Celle-ci repose aussi sur un point de la courbe elliptique. Ainsi, quand $X_1(l)$ est de genre 0, cela fournit des familles rationnelles, explicitées dans~\cite{mestrehyp}, pour $l=5,7$ par exemple.

\paragraph{Courbes modulaires $X_0(n)$} Dans le cas où la jacobienne de $H$ est simple, on trouve d'autres courbes que celles fournies plus haut. On peut par exemple donner des exemples issus des courbes modulaires $X_0(n)$ qui sont des courbes (hyperelliptiques) de genre 2. On sait --- voir par exemple~\cite{ogg} --- qu'il n'existe qu'un nombre fini de courbes modulaires qui sont hyperelliptiques, qui plus est de genre 2. Elles sont données pour les entiers $n\in\{22,23,26,29,31,37\}$. On considère leur modèle de \WS, avec deux points à l'infini. Parmi ces entiers $n$, il y a soit des nombres premiers, soit des composés de deux nombres premiers. Toujours dans~\cite{ogg}, on a des points rationnels sur les jacobiennes de $X_0(n)$ dont on connaît l'ordre. Ainsi, pour $n$ premier, le point\footnote{Si l'on se tient à la formule~\ref{div:jac} que l'on a choisie pour représenter un point de la jacobienne, il s'agit de $2(P_{\infty,1})-(P_{\infty,1})-(P_{\infty,2})$, qui est bien le même.} $(P_{\infty,1})-(P_{\infty,2})$ est d'ordre le numérateur de $\tfrac{n-1}{12}$, et dans le cas où $n=pq$, on a des points d'ordre les numérateurs des $\tfrac{(p-1)(q-1)}{24}$, $\tfrac{(p+1)(q-1)}{24}$ et $\tfrac{(p-1)(q+1)}{24}$. On peut résumer cela dans la table~\ref{xon} suivante, où les équations ont été obtenues par Fricke~\cite{fricke}

\begin{table}[h!]
\centering
\begin{tabular}{|c|c|c|c|}
\hline
$n$&Équation de $X_0(n)$&\!\!Points\,rationnels\!\!&\!Ordre $l$\!
\\\hline&&&\\[-1.3em]\hline
&&$(P_{\infty,1})-(P_{\infty,2})$&5\\
22&$y^2\!=\!(x^3\!+\!8x^2\!+\!16x\!+\!16)(x^3\!+\!4x^2\!+\!8x\!+\!4)$&$(0,8)-(P_{\infty,2})$&5\\
&&\!$(0,-8)\!-\!(P_{\infty,2})$\!&5\\
\hline
23&$y^2\!=\!x^6\!-\!14x^5\!+\!57x^4\!-\!106x^3\!+\!90x^2\!-\!16x\!-\!19$&$(P_{\infty,1})-(P_{\infty,2})$&11\\
\hline
26&$y^2=x^6-8x^5+8x^4-18x^3+8x^2-8x+1$&$(P_{\infty,1})-(P_{\infty,2})$&21\\
\hline
29&$y^2=x^6-4x^5-12x^4+2x^3+8x^2+8x-7$&$(P_{\infty,1})-(P_{\infty,2})$&7\\\hline
31&$y^2\!=\!x^6\!-\!14x^5\!+\!61x^4\!-\!106x^3\!+\!66x^2\!-\!8x\!-\!3$&$(P_{\infty,1})-(P_{\infty,2})$&5\\\hline
37&$y^2=x^6-9x^4-11x^2+37$&$(P_{\infty,1})-(P_{\infty,2})$&3\\\hline
\end{tabular}
\caption{Points rationnels de courbes modulaires hyperelliptiques de genre 2}
\label{xon}
\end{table}
Dans toutes les situations, on voit que le point $(P_{\infty,1})-(P_{\infty,2})$ des jacobiennes  permet de construire des courbes à multiplication réelle par l'un des $\Q(\zeta_5^+)$, $\Q(\zeta_7^+)$ ou $\Q(\zeta_{11}^+)$, les points d'ordre 3 ne donnant rien ici car $\Q(\zeta_3^+)=\Q$.

La forme de ce diviseur, très particulière, permet de construire très facilement les courbes $Y$ et $X$ en donnant des formes isomorphes, mais plus faciles à calculer. En effet, le diviseur principal $l((P_{\infty,1})-(P_{\infty,2}))$ est le diviseur d'une fonction de la forme $P(x)+yQ(x)$ telle que sa norme $P^2(x)-H(x)Q^2(x)$ soit une constante, où l'on a noté $y^2=H(x)$ la fonction définissant la courbe modulaire hyperelliptique.

Cela a deux avantages. D'une part, on peut calculer assez facilement une telle fonction, sans utiliser les méthodes liées au théorème de \RR. Supposons $H(x)$ unitaire et écrivons
\[x^6H(\tfrac1x)=\frac{\big(x^lP(\tfrac1x)\big)^2-cx^{2l}}{\big(x^{l-3}Q(\tfrac1x)\big)^2},\]
où $c$ est la constante telle que $P^2(x)-H(x)Q^2(x)=c$. Par développements limités, on a alors
\[\sqrt{x^6H(\tfrac1x)}=\frac{x^lP(\tfrac1x)}{x^{l-3}Q(\tfrac1x)}-\frac c2x^{2l}+o(x^{2l}),\]
si bien que les approximants de Padé de $\sqrt{x^6H(\tfrac1x)}$ d'ordre $(l,l-3)$ sont $x^lP(\tfrac1x)$ et $x^{l-3}Q(\tfrac1x)$, ce qui s'obtient aisément, par exemple par la résolution d'un système linéaire. 

D'autre part, par le théorème 90 de Hilbert, à la constante $k=P(x)^2-H(x)Q(x)^2$ près, la fonction obtenue s'écrit déjà sous la forme $\tfrac{\phi(x,y)}{\phi(x,-y)}$. Ainsi, quitte à multiplier $\phi$ par une constante telle que sa norme soit une puissance $l$-ième, que l'on note $\kappa^l$, on a pour $Y$ les équations $\{y^2=H(x)\textrm{ et }t^l=\phi(x,y)\}$, avec comme automorphismes

\begin{align*}
\alpha:(x,y,t)&\mapsto(x,-y,\tfrac \kappa t)\\
\sigma:(x,y,t)&\mapsto(x,y,\zeta_l t)
\end{align*}
et $z_t:=t+\tfrac\kappa t$, l'élément invariant permettant de calculer une équation de $X$. 

\begin{expl}[Courbes à multiplication réelles par $\Q(\zeta_l^+)$ issues des courbes modulaires $X_0(n)$]On finit ce paragraphe par donner des exemples pour $l=5$ et $7$, les calculs pour les différents $l$ étant d'une difficulté quasi-égale pour la machine.

\begin{itemize0}
\item[\qquad\bf$\mathbold{l=}$\,5] On utilise la courbe modulaire $X_0(31)$ et les approximants de Padé donnent
\[\phi(x,y)=x^5-19x^4+125x^3-328x^2+280x-13+y(x^2-12x+35)\]
de diviseur $5((P_{\infty,1})-(P_{\infty,2}))$ et de norme $62^2$ permettant de prendre $\kappa=1$ avec $\frac\phi{62}$. On obtient une équation de $X,$ $z g_5(z^2-2) =\tfrac1{31}({x}^{5}-19{x}^{4}+125{x}^{3}-328{x}^{2}+280x-13)$ et sous forme hyperelliptique $y^2 -\tfrac1{31}y = x^6 + x^5 - \tfrac5{31}x^3 + \tfrac4{31^2}x$, 
qui a mauvaise réduction en $5$ et en $31$.
\item[\qquad\bf$\mathbold{l=}$\,7] Ici, on choisit la courbe modulaire $X_0(29)$. La fonction $\phi(x,y)$ déterminée par les approximants de Padé
\[x^7\!-13x^6+45x^5+25x^4\!-269x^3+29x^2+300x+166+(x^4\!-11x^3+31x^2+14x-100)y\]
a pour norme $29\cdot58^2$, ce qui impose de prendre $\kappa=29\neq1$ et $\tilde\phi=\tfrac{29^2}{2}\phi$. Comme $X$ est de genre 3 non-hyperelliptique, on peut la mettre sous forme quartique,
{\footnotesize
\begin{align*}841u^4&-1682u^3v+116uv^3+29v^4+841u^3-232u^2v+58uv^2+58v^3-87u^2+58uv-13v^2-58u-17v+4
\end{align*}
}%
qui a mauvaise réduction en $7$ et en $29$.
\end{itemize0}
\end{expl}

\paragraph{D'autres courbes rationnelles ayant des diviseurs rationnels d'ordre $l$}

Outre les courbes modulaires, il existe beaucoup d'autres courbes qui permettent des constructions similaires, aboutissant à des courbes à multiplication réelle par $\Q(\zeta_l^+)$ qui, \emph{a priori}, n'appartiennent pas aux différentes familles que l'on a déjà vues et mentionnées. On peut notamment citer les familles à 1 paramètre de courbes de genre~2 possédant des points d'ordre 15 ou 21 exposées par F. Leprévost dans~\cite{leprevost91} ou bien encore d'autres courbes présentes dans~\cite{leprevost95} du même auteur. 
On peut aussi, comme pour les courbes modulaires, chercher des courbes hyperelliptiques de genre 2 dont le point de la jacobienne $P_{\infty,1}-P_{\infty,2}$ est d'ordre fini $l$. Les outils algorithmiques déjà rencontrés permettent de démontrer les deux propositions suivantes.
\begin{prop}
Les courbes hyperelliptiques de genre $2$, de la famille à deux paramètres $s$ et $t$, d'équation\vspace*{-8pt}
\[y^2=s(16t-27+4s)x^6+2s(-16t+27+s)x^5+10stx^3+2t^2x+t^2,\vspace*{-5pt}\]
sont à multiplication réelle par $\Q(\zeta_5^+)$.
\end{prop}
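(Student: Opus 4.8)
The plan is to recognise the displayed family as the quotient $X=Y/\langle\alpha\rangle$ produced by Ellenberg's construction starting from a suitable two-parameter family of genus~$2$ curves $H$ for which the point $P_{\infty,1}-P_{\infty,2}$ of the Jacobian has order~$5$, and then to invoke Ellenberg's theorem for the type $(1,1,1,1,1,1)$ with $l=5$; this gives real multiplication by $\Q(\zeta_5^{(2)})=\Q(\zeta_5^+)$, whose degree~$2$ matches the genus.

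First I would produce the input family. Starting from a sextic model $y^2=H(x)$ with $\deg H=6$, so that $H$ carries two points at infinity $P_{\infty,1},P_{\infty,2}$, the point $P_{\infty,1}-P_{\infty,2}\in\Jac(H)$ has order dividing~$5$ exactly when there is a function $\phi(x,y)=P(x)+yQ(x)$ with $\div(\phi)=5\bigl((P_{\infty,1})-(P_{\infty,2})\bigr)$; equivalently, when the Pell-type identity $P(x)^2-H(x)Q(x)^2=c$ holds with $c$ a nonzero constant, $\deg P=5$ and $\deg Q=2$, and such a pair $(P,Q)$ is exactly the $(5,2)$ Padé approximant of $\sqrt{x^6H(1/x)}$ used above for the modular curves. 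Imposing that this approximant give an exact identity carves out, inside the three-dimensional moduli of such sextics, a surface; rationally parametrising it by two parameters $s,t$ is what yields the coefficients $s(16t-27+4s)$, $2s(-16t+27+s)$, etc. Since $5$ is prime and $P_{\infty,1}\neq P_{\infty,2}$, the order is then exactly~$5$, so the degree~$5$ cover of $H$ defined by $\phi$ is connected.

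Next, after rescaling $\phi$ by a constant so that its norm $P^2-HQ^2$ becomes a fifth power $\kappa^5$ (one may arrange $\kappa=1$, as in the $X_0(31)$ example), form the curve $Y\colon\{\,y^2=H(x),\ t^5=\phi(x,y)\,\}$. The maps $\sigma\colon(x,y,t)\mapsto(x,y,\zeta_5t)$ and $\alpha\colon(x,y,t)\mapsto(x,-y,\kappa/t)$ have orders $5$ and $2$ and satisfy $\alpha\sigma\alpha^{-1}=\sigma^{-1}$; since $[k(Y):k(\Pp^1)]=5\cdot2=10=|G_{5,2}|$, the cover $Y\to\Pp^1$ is Galois with group $G_{5,2}$. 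Because all orders in $\div(\phi)$ are divisible by~$5$, the cover $Y\to H$ is étale, so the ramification of $Y\to\Pp^1$ comes only from $H\to\Pp^1$: exactly the six Weierstrass points, each with monodromy of order~$2$. Hence $Y\to\Pp^1$ is of type $(1,1,1,1,1,1)$, and, with $H=\langle\alpha\rangle$ and $X=Y/\langle\alpha\rangle$, Ellenberg's theorem gives that $\Jac(X)$ has real multiplication by $\Q(\zeta_5^+)$.

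It remains to make $X$ explicit. Its function field is $k(x,z_t)$ with $z_t:=t+\kappa/t$; from $t+\kappa/t=z_t$, $t\cdot(\kappa/t)=\kappa$ and $t^5+(\kappa/t)^5=\phi(x,y)+\phi(x,-y)=2P(x)$ one gets the plane model $z_t^5-5\kappa z_t^3+5\kappa^2z_t=2P(x)$, that is $z_t\,g_5(z_t^2-2)=2P(x)$ when $\kappa=1$, with $g_5$ the minimal polynomial of $-\zeta_5-\zeta_5^{-1}$ as before --- concretely this is the resultant in $t$ of $t^5-\phi(x,y)$ and $t^2-z_tt+\kappa$, reduced modulo $y^2=H(x)$. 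The genus~$2$ curve $X$ is hyperelliptic, and an explicit change of variables (as in the $X_0(31)$ example) brings this model to hyperelliptic normal form; substituting the parametrisation of $H$ and absorbing the residual homogeneity in the coefficients of $H$, which leaves exactly two essential parameters, produces the quoted equation. The main obstacle is the combination of the first step with this final bookkeeping: exhibiting a closed, genuinely two-parameter rational parametrisation of the sextics $H$ with $P_{\infty,1}-P_{\infty,2}$ of order~$5$ (equivalently, of the solutions of the Pell identity above), and then carrying the heavy but mechanical resultant elimination and hyperelliptic reduction all the way down to the displayed normal form; everything else is a direct application of Ellenberg's theorem.
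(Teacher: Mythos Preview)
Your proposal is correct and follows essentially the same route as the paper: find a two-parameter family of genus~$2$ sextics $H$ on which $P_{\infty,1}-P_{\infty,2}$ has order~$5$, build the Kummer cover $t^5=\phi(x,y)$ to get a $G_{5,2}$-cover of type $(1,1,1,1,1,1)$, invoke Ellenberg, and compute the quotient $X=Y/\langle\alpha\rangle$ via $z_t=t+\kappa/t$ and a resultant, then reduce to hyperelliptic form.

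The one place you diverge slightly is in how the order-$5$ locus is cut out. The paper does not impose the Pell/Padé identity directly as a defining condition; instead it computes $2(P_{\infty,1}-P_{\infty,2})$ and $3(P_{\infty,1}-P_{\infty,2})$ in Mumford representation, equates their first (monic) coordinates, eliminates one coefficient by a resultant, and then parametrises a genus-$0$ factor of the resulting curve. Only afterwards is the Padé method used, to produce the function $\phi$ on the already-parametrised family. Your approach folds these two steps into one (the Pell condition simultaneously singles out the locus and hands you $\phi$), which is tidier; the paper's Mumford-coordinate detour has the practical advantage that the equations one must solve are visibly polynomial and the genus-$0$ component is easy to isolate, which is what makes the explicit rational two-parameter family actually computable. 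Either way the content is the same, and you correctly flag the rational parametrisation as the only substantive step.
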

\begin{proof}
Il s'agit une fois encore de la totalité, à rationalité près, des courbes à multiplication réelle par  $\Q(\zeta_5^+)$. Soit $y^2=x^6+ax^4+bx^3+cx^2+dx+e$ une courbe hyperelliptique de genre 2 générale. On  calcule, sous représentation de Mumford, les diviseurs $2(P_{\infty,1}-P_{\infty,2})$ et $3(P_{\infty,1}-P_{\infty,2})$ dont on égale la première coordonnée (unitaire). Cela nous donne deux équations, le coefficient en $x$ et le coefficient constant, dont on prend le résultant en $e$. Un des facteurs est de genre 0 que l'on paramètre. On obtient une famille de courbes de genre 2 dont le point $(P_{\infty,1}-P_{\infty,2})$ est d'ordre~5.

{\footnotesize
\relpenalty=0000
\binoppenalty=0000
\noindent$y^2=82944x^6\!+\!(13824t\!+\!3456s\!-\!31104 ) x^{4}\!+\!(1728s\!-\!3456t\!+\!5184)x^3\!+\!(864ts\!-\!648s\!+\!36s^2\!+\!576{t}^{2}\!-\!2592t\!+\!2916)x^2\!+\!(\!-\!972\!+\!216ts\!-\!288t^2\!+\!1080t\!-\!216s\!+\!36s^2)x\!+\!81\!+\!54s\!-\!108t\!+\!36t^2\!-\!108ts\!+\!9s^2\!+\!48t^2s$.}

Il ne reste plus qu'à appliquer la méthode décrite ci-dessus des approximants de Padé pour trouver une fonction $\phi$. Ensuite, on mène les mêmes calculs que ceux expliqués dans l'exemple~\ref{q5plus} pour trouver le modèle hyperelliptique donné.
\end{proof}

\begin{prop}
Les courbes quartiques suivantes sont à multiplication réelle par $\Q(\zeta_7^+)$. Elles appartiennent à deux familles, la première à un paramètre $s$,\vspace*{-8pt}
\[2v + u^3 +(u+1)^2+s\bigl((u^2+v)^2 - v (u+v)(2u^2-uv + 2v)\bigr)\vspace*{-10pt}\]
et la seconde en deux paramètres $s$ et $t$, \vspace*{3pt}

{
\relpenalty=0000
\binoppenalty=0000
\noindent $- ( s+t ) ^{2}+2 ( s+t ) sv+ ( -{s}^{2}+3
{t}^{2}-t ) v^{2}+ ( 6{t}^{2}-2t-2{s}^{2}
 ) v^{3}+2 ( s+t ) ^{2}u+ ( 6{t}^{2}-2t-
2{s}^{2} ) uv^{2}+ ( -{s}^{2}+3{t}^{2}-t ) uv
^{3}- ( s+t )  ( s-t ) u^{2}+ ( 2t+2{
s}^{2}-6{t}^{2} ) u^{2}v+ ( t-3{t}^{2}+{s}^{2}
 ) u^{2}v^{2}+ ( s+t )  ( s-t ) u^{3
}+ ( 2t+2{s}^{2}-6{t}^{2} ) u^{3}v+ ( -{s}^{2}
+3{t}^{2}-t ) u^{4}.$\vspace*{-5pt}
}
\end{prop}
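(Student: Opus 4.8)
The plan is to reduce everything to the general construction for type $(1,1,1,1,1,1)$ (\S\ref{type:111111}) and then to invoke the theorem of Ellenberg. For each of the two quartics $C$ in the statement it suffices to exhibit a genus-$2$ hyperelliptic curve $H\colon y^2=H(x)$, with two rational points at infinity $P_{\infty,1},P_{\infty,2}$ and depending on the parameter $s$ (resp.\ on $s,t$), such that $P:=(P_{\infty,1})-(P_{\infty,2})\in\Jac(H)$ has exact order $7$, and such that the curve $X=Y/\langle\alpha\rangle$ attached to $H$ by that construction is isomorphic to $C$. Indeed, once $P$ is $7$-torsion, $7\big((P_{\infty,1})-(P_{\infty,2})\big)=\div(\phi)$ for a function $\phi(x,y)=P_0(x)+yQ_0(x)$; the curve $Y$ cut out by $y^2=H(x)$ and $t^7=\phi(x,y)/\phi(x,-y)$ is an unramified degree-$7$ cover of $H$, and $\alpha\colon(x,y,t)\mapsto(x,-y,1/t)$, $\sigma\colon(x,y,t)\mapsto(x,y,\zeta_7t)$ satisfy $\alpha\sigma\alpha^{-1}=\sigma^{-1}$, so that $Y\to\Pp^1$, $(x,y,t)\mapsto x$, is Galois with group $G_{7,2}$. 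Since $H\to\Pp^1$ is branched exactly at the six \WS points, with monodromy of order $2$ at each, and $Y\to H$ is unramified, the cover $Y\to\Pp^1$ has type $(1,1,1,1,1,1)$, which is one of the entries of Table~\ref{table:types} for $n=2$; Riemann--\Hu gives $g(X)=3$, and $X$ is generically non-hyperelliptic, hence a plane quartic. Ellenberg's theorem then yields real multiplication of $\Jac(X)$ by $\Q(\zeta_7^{(2)})=\Q(\zeta_7^+)$.

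It remains to produce the families of curves $H$ carrying such a $7$-torsion point. Starting from the general equation $y^2=x^6+ax^4+bx^3+cx^2+dx+e$ (the $x^5$-term removed by a translation in $x$), I would compute, in the Mumford representation \eqref{div:jac}, the divisors $3P$ and $4P$: since $7P=0$ is equivalent to $3P=-4P$, equating the two monic degree-$2$ first coordinates yields two polynomial relations among $a,b,c,d,e$. Eliminating one unknown by a resultant — exactly as in the order-$5$ case treated just above — and extracting the rational components of the resulting locus, then dividing out the residual scaling $x\mapsto\lambda x$, produces a two-parameter rational family of such $H$; the one-parameter family in $s$ appears either as a further rational component of the same locus or as a suitable specialisation of it, and matching each component to the quartic displayed in the statement is part of the bookkeeping.

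For a given $H$ the computation of $X$ follows Algorithme~\ref{algo:mult:reelle}, adapted as in the paragraph on the modular curves $X_0(n)$: the pair $(P_0,Q_0)$ is read off from the Padé approximants of $\sqrt{x^6H(1/x)}$ of order $(7,4)$ (a linear system, which bypasses \RR), one rescales $\phi$ so that its norm $P_0^2-HQ_0^2$ becomes a $7$-th power $\kappa^7$, and then $\Pi(x,y,z):=\Res_t\big(\phi(x,-y)t^7-\phi(x,y),\,t^2+\kappa-tz\big)$ is the minimal polynomial of $z_t=t+\kappa/t$; reducing $\Pi$ modulo $H(x)$ removes the dependence on $y$ and gives a plane model of $X$, which a linear change of coordinates on its canonical (degree-$4$) image brings to the stated form.

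The one genuinely delicate point is the middle step: describing the variety of genus-$2$ curves for which $(P_{\infty,1})-(P_{\infty,2})$ is $7$-torsion and isolating its rational components so as to get clean $1$- and $2$-parameter parametrisations. Everything after that is a finite, though bulky, resultant-and-elimination computation, and the real-multiplication conclusion is automatic from Ellenberg's theorem once the cover has been checked to be of type $(1,1,1,1,1,1)$ with Galois group $G_{7,2}$; as an independent check one can, just as in \S\ref{type:0011}, verify on the quartic model itself that $[\sigma]+[\sigma^{-1}]$ induces an endomorphism of $\Jac(X)$ and that $\Q(\zeta_7^+)\hookrightarrow\End_\Q(\Jac(X))$ by a dimension count.
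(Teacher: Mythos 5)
Your overall strategy is the one the paper follows: a genus-$2$ curve $H$ whose class $(P_{\infty,1})-(P_{\infty,2})$ is of order $7$, the Kummer extension $t^7=\phi(x,y)/\phi(x,-y)$ giving an unramified dihedral cover of type $(1,1,1,1,1,1)$, Ellenberg's theorem for the real multiplication, and the Padé/resultant machinery of the paragraph on $X_0(n)$ to write down $X$ and its quartic model. The one place where you genuinely diverge is the step you yourself flag as delicate: how to cut out and rationally parametrise the locus of sextics for which $(P_{\infty,1})-(P_{\infty,2})$ is $7$-torsion. You propose to impose $3P=-4P$ via Mumford-representation arithmetic on the general sextic — exactly the method the paper uses for $l=5$ (where $2P=-3P$ suffices) — and then hope to extract rational components. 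The paper explicitly abandons this for $l=7$ (\og la situation est plus délicate ici \fg) and instead encodes the torsion condition through the Padé approximants themselves: with $P,Q$ of order $(7,4)$, the difference $P^2-fQ^2$ is generically of degree $2$, and $7$-torsion is the vanishing of its coefficients in $x$ and $x^2$; crucially, this is applied not to the general sextic but to the normal forms $y^2=x^6+ax^4+bx^3+cx^2+dx$ (first family) and $y^2=x^6+ax^5+ax^4+bx^3+cx^2+dx$ (second family), and it is the resultant in $a$ of those two coefficient conditions that exhibits a genus-$0$ factor one can parametrise. Without such a choice of normal form there is no guarantee that your elimination produces a rational component matching the stated quartics, so your \og bookkeeping \fg hides the actual content of the proposition; the paper closes that gap by displaying the two intermediate hyperelliptic families (in $t$, resp.\ in $s,t$) whose $7$-torsion is then checked directly by another Padé computation. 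Everything downstream of that point in your write-up (construction of $Y$, Galois group $G_{7,2}$, genus $3$, canonical quartic, Ellenberg) agrees with the paper and is correct.
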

\begin{proof}
La situation est plus délicate ici. Pour pouvoir obtenir une condition de genre~0, on commence par considérer une courbe hyperelliptique d'équation $y^2=f(x)=x^6+ax^4+bx^3+cx^2+dx$, puis on calcule les approximants de Padé de $\sqrt{\tfrac1{x^6}f(\tfrac1x)}$ d'ordre $(7,4)$. On les note $\tfrac1{x^7}P(\tfrac1x)$ et $\tfrac1{x^4}Q(\tfrac1x)$ si bien que $P(x)^2-f(x)Q(x)^2$ est un polynôme de degré 2. Le résultant des coefficients en $x$ et $x^2$ relativement à $a$ possède un facteur de genre 0 que l'on paramètre : on obtient la courbe hyperelliptique à un paramètre $t$,\vspace*{-5pt}
\[y^2={x}^{6}+ \tfrac{2t-1}2{x}^{4}+t{x}^{3}+ \tfrac{4t^2-12t+1}{16} {x}^{2}+\tfrac{{t}^{2}}2x\]
dont on peut vérifier directement, par un calcul d'approximation de Padé similaire, que le point $(P_{\infty,1}-P_{\infty,2})$ est bien d'ordre 7.

Il ne reste plus qu'à calculer l'équation de $X$ comme dans le paragraphe précédent, puis à déterminer une équation quartique, en utilisant une base de différentielles. On aboutit à la première famille.

Pour la seconde famille, on part cette fois d'une équation hyperelliptique générale de degré 6, que l'on peut toujours ramener à la forme $y^2=x^6+ax^5+ax^4+bx^3+cx^2+dx$. En effectuant des calculs similaires d'approximation de Padé, 
on trouve la famille de courbes à deux paramètres $s,t$,

{\footnotesize
\begin{align*}y^2={x}^{6}&+ ( 2t+s ) {x}^{5}+ ( 2t+s ) {x}^{4}+
 \big( \tfrac12{s}^{2}-t{s}^{2}+3ts+4{t}^{2}-3{t}^{2}s-\tfrac18{s}^{3
}-\tfrac52{t}^{3} \big) {x}^{3} \\&+\big( \tfrac14{s}^{2}+{\tfrac {1}{64}}{
s}^{4}-\tfrac18{s}^{3}+ts+{t}^{2}-\tfrac14t{s}^{2}-\tfrac12{t}^{2}{s}^{2}-\tfrac12
{t}^{4}-{t}^{3}s \big) {x}^{2}+\tfrac1{32}t ( -4s+{s}^{2}-8t+6
ts+6{t}^{2} ) ^{2}x,
\end{align*}
}%
dont le point $(P_{\infty,1}-P_{\infty,2})$  est d'ordre 7. On finit les calculs comme dans le cas de la première famille, pour aboutir à une équation quartique plane, à deux paramètres.
\end{proof}
\paragraph{Corps finis}

Une autre approche consiste à se placer sur les corps finis, ce qui permet d'obtenir des courbes dont la jacobienne possède des points d'ordre $l$. Voici un algorithme probabiliste, qui fournit, sur $\F_p$, des courbes à multiplication réelle par $\Q(\zeta_l^+)$.

\begin{algo}[H]\caption{Courbes à multiplication réelle par $\Q(\zeta_l^+)$ (II)}\label{algo:mult:reelle2}
\noindent\textbf{Entrée} : $(l,p)$ 
\begin{algorithmic}[1]
\Repeat
\State \textbf{Choisir} $\lambda_1,\lambda_2,\lambda_3\in\F_p$
\State $H\leftarrow y^2-x(x-1)(x-\lambda_1)(x-\lambda_2)(x-\lambda_3)$
\State $n\leftarrow\#\Jac(H)=:J$
\Until{$n\equiv 0\mod l$}
\State{$m\leftarrow\tfrac nl$}
\Repeat
\State \textbf{Choisir} $Q\in\Jac(H)\setminus\{O_J\}$.
\Until{$mQ\neq O_H$}
\State{$P\leftarrow mQ$}\Comment{$P$ est d'ordre $l$}
\State{\textbf{Calculer} $\phi(x,y)$ telle que $\div(\phi)=l\Div(P)$}
\State{$\Pi\leftarrow\Res_t(\phi(x,-y)t^l-\phi(x,y),t^2+1-zt)$}
\State{$X\leftarrow \Pi\mod\!_y\  H$}\Comment{$X$ est un polynôme en $x,z$.}
\end{algorithmic}
\noindent\textbf{Sortie} : $X$
\end{algo}

\begin{expl}Finissons par donner, comme dans le cas précédent, deux exemples.
\begin{itemize0}
\item[\qquad\bf$\mathbold{l=}$\,5.]
L'algorithme ci-dessus fournit, dans $\F_{31}$, la courbe hyperelliptique $H$ d'équation de \WS  $y^2=x(x-1)(x-2)(x+4)(x-5)$ dont la jacobienne possède 1120 éléments.  $P=[x(x-7), 20x]$, en représentation de Mumford, est d'ordre 5 et la fonction
$x^2(4y+3x^3+7x^2+3x$ a comme diviseur $5((0,0,1)+(7,16,1)-2(0,1,0))$ : elle conduit à la courbe $X$ dont on donne une équation hyperelliptique $y^2=(x+18)(x^5 + 7x^4 + x^3 + 25x^2 + 7x + 12)$.
Le polynôme caractéristique de la trace du Frobenius, est
$r^2+19r+89$, de discriminant 5 et définissant bien le corps de nombre $\Q(\zeta_5^+)=\Q(\sqrt5)\hookrightarrow\Q(\pi_H)$. 
\item[\qquad\bf$\mathbold{l=}$\,7.]
Pour $p=41$, l'algorithme~\ref{algo:mult:reelle2} propose $y^2=x(x-1)(x-3)(x+7)(x-7)$ pour l'équation de $H$, dont la jacobienne possède $7\cdot240$ points. Parmi eux, en représentation de Mumford, $[(x+5)^2,-2x+16]$, est d'ordre 7. La fonction \[(x^4+22x^3+22x^2+17x+24)y+10x^7+20x^6+35x^4+14x^3+40x^2+6x+36\] a pour diviseur $14((36,26,1)-(0,1,0))$. On calcule alors l'équation de $X$ sur $\F_{41}$, sous forme quartique,
{\footnotesize
\begin{equation*}
u^{3}v+u^{2}v^{2}+26uv^{3}+2v^{4}+24u^{3}+11u^{2
}v+21uv^{2}+2v^{3}+24u^{2}+37uv+3v^{2}+20u+11v+3.
\end{equation*}
}%
On vérifie à nouveau, par un calcul de fonction zêta, que le polynôme caractéristique de la trace du Frobenius est  $r^3+21r^2+126r+203$
qui définit le corps de nombres $\Q(\zeta_7^+)$.
\end{itemize0}
\end{expl}
\section{Multiplication réelle par \texorpdfstring{$\Q(\zeta_l^{(4)})$}{Q(zeta\_l(4))}.}
Dans toute cette section, nous considérons naturellement des entiers premiers $l$ congrus à 1 modulo 4. De plus, on se place ici plus volontiers sur le corps $k=\Q(i)$ ou éventuellement sur des corps de caractéristique $p$ avec $p\equiv1\mod 4$ de façon à encore disposer d'une racine carrée de $-1$. 
\subsection{Type (0,1,1)}\label{type:011}
On a, à nouveau, un zéro dans le type du revêtement : on cherche donc un revêtement $Z\rightarrow \Pp^1$, de degré 4, ramifié en deux points, d'ordre 4 aussi. La formule d'Hurwitz donne
\[g(Z)=\frac12\big(2+4(2g(\Pp^1)-2)+(4-1)+(4-1)\big)=0.\]
Une équation de  $Z$, de genre 0, peut être  $y^4=x$ avec le revêtement de $\Pp^1$, $(x,y)\mapsto x$, ramifié en $0$ et $\infty$, dont le groupe de Galois est $\Zz4$, engendré par $(x,y)\mapsto(x,iy)$.

On cherche alors un revêtement $Y\rightarrow Z$ de groupe $\Zz l$ tel que le revêtement $Y\rightarrow\Pp^1$ soit de type (0,1,1). Comme d'habitude on le cherche sous la forme d'une extension de Kummer $t^l=\phi(x,y)=\psi(y)$. Comme on travaille au-dessus de $\Pp^1$, on ne peut pas espérer de ramification annulée par un diviseur dont l'ordre en chacun des points de son support est multiple de $l$, car sur $\Pp^1$, seules les puissances $l$-ièmes ont de tels diviseurs \og multiples \fg{} de $l$.

Ainsi, la ramification est donnée exactement par les pôles et les zéros de $\psi(y)$. Comme on ne veut qu'un seul point de ramification au-dessus de $\Pp^1$, les pôles et les zéros de $\psi$ doivent avoir les mêmes puissances 4\ieme. À homographie près, on a donc essentiellement un seul revêtement convenable, fourni par la proposition suivante.

\begin{prop}
Soit $\psi(y)=\tfrac{(y-1)(y-i)}{(y+1)(y+i)}$ et $a\in(\Zz l)^*$ d'ordre $4$. On définit la courbe\vspace*{-3pt}
\[%
(Y)\ \Bigg\{\begin{array}{l@{\,=\,}l}
y^4&x\\
t^l&\psi(y)\psi(iy)^a.
\end{array}
\]
Alors, le revêtement $Y\rightarrow\Pp^1$ donné par l'application $(x,y,t)\mapsto x$ est de type (0,1,1). De plus, en considérant $k$ tel que $kl-a^2=1$, les automorphismes\vspace*{-5pt}
\begin{align*}
\alpha:(x,y,t)&\mapsto(x,iy,t^{-a}\psi(iy)^k)\\
\sigma:(x,y,t)&\mapsto(x,y,\zeta_l y)\vspace*{-5pt}
\end{align*}
engendrent le groupe métacyclique $G_{l,4}$, groupe de Galois de ce revêtement.
\end{prop}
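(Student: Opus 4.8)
The plan is to use the factorisation $Y\to Z\to\Pp^1$ with $Z:y^{4}=x$ announced at the end of Section~\ref{sec:ellen1}: the map $Z\to\Pp^1$, $(x,y)\mapsto x$, is cyclic of degree $4$, generated by $(x,y)\mapsto(x,iy)$, totally ramified over $x=0$ and $x=\infty$ and étale elsewhere, while $Y\to Z$, $(x,y,t)\mapsto(x,y)$, is the degree-$l$ Kummer cover attached to $f:=\psi(y)\psi(iy)^{a}$. First I would locate the branch points. Using $\psi(iy)=\tfrac{(iy-1)(y-1)}{(iy+1)(y+1)}$ and the identity $\psi(-z)=\psi(z)^{-1}$, a short computation shows that $\div(f)$ on $Z$ is supported at $y=1,i,-1,-i$ with respective orders $1+a$, $1-a$, $-(1+a)$, $a-1$, and that $f$ equals $1$ at $y=0$ and $y=\infty$. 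Since $a$ has order $4$ in $(\Z/l\Z)^{*}$ it is $\not\equiv0,\pm1\pmod l$, so these four orders are all prime to $l$; hence $f$ is not an $l$-th power (so $Y$ is irreducible) and $Y\to Z$ is totally, tamely ramified exactly above $y=\pm1,\pm i$ and étale elsewhere. As those four points all lie over $x=1$, the branch locus of $Y\to\Pp^1$ is exactly $\{0,1,\infty\}$.

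Next I would check that $\sigma$ and $\alpha$ are automorphisms of $Y$ over $\Pp^1$. For $\sigma$ this is immediate since $\zeta_l^{l}=1$. For $\alpha$, writing $y'=iy$ and using $\psi(iy')=\psi(-y)=\psi(y)^{-1}$, the proposed new parameter $t'=t^{-a}\psi(iy)^{k}$ satisfies $(t')^{l}=t^{-al}\psi(iy)^{kl}=\bigl(\psi(y)\psi(iy)^{a}\bigr)^{-a}\psi(iy)^{kl}=\psi(y)^{-a}\psi(iy)^{\,kl-a^{2}}=\psi(iy)\psi(y)^{-a}=\psi(y')\psi(iy')^{a}$, the step $\psi(iy)^{kl-a^{2}}=\psi(iy)$ being exactly the condition $kl-a^{2}=1$; so $\alpha$ respects both defining equations. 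Then $\sigma$ has order $l$, and iterating $\alpha$ gives $\alpha^{2}:(x,y,t)\mapsto\bigl(x,-y,\,t^{a^{2}}\psi(iy)^{-ak}\psi(y)^{-k}\bigr)$. Squaring this and reducing the surplus power of $t$ through $t^{l}=\psi(y)\psi(iy)^{a}$, the exponents of $\psi(y)$ and of $\psi(iy)$ collect into $\tfrac{a^{4}-1}{l}+k(1-a^{2})$, which vanishes because $lk=a^{2}+1$ forces $a^{4}-1=(a^{2}-1)(a^{2}+1)=lk(a^{2}-1)$; hence $\alpha^{4}=\id$, and since $\alpha^{2}$ already acts by $y\mapsto-y$ the order of $\alpha$ is exactly $4$. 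Comparing the $t$-coordinates of $\alpha\sigma$ and of $\sigma^{j}\alpha$ gives $\zeta_l^{-a}=\zeta_l^{j}$, \ie $\alpha\sigma\alpha^{-1}=\sigma^{-a}$, and $-a$ again has order $4$ in $(\Z/l\Z)^{*}$; thus $G:=\langle\alpha,\sigma\rangle$ is a quotient of $G_{l,4}$. Since $\langle\sigma\rangle$ is normal (by that conjugation relation) of order $l$, while the nontrivial powers of $\alpha$ all move the $y$-coordinate so that $\langle\sigma\rangle\cap\langle\alpha\rangle=\{1\}$, one gets $|G|=4l=\deg(Y/\Pp^1)$; hence $Y\to\Pp^1$ is Galois with group $G_{l,4}$.

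With the Galois group identified, the type is read off from the ramification via Notation~\ref{nota:types}. Over $x=1$ the inertia group of a point of $Y$ maps onto the inertia of $Z/\Pp^1$ there, which is trivial, so it is contained in $\ker(G_{l,4}\to\Z/4)=\langle\sigma\rangle$ and has order $l$: the corresponding $d_{i}$ is $0$. Over $x=0$ and over $x=\infty$ the inertia group maps isomorphically onto $\Z/4$, because $Y\to Z$ is étale and $Z\to\Pp^1$ totally ramified at the points above; so a generator has order $4$ and $d_{i}=4/4=1$. Hence $Y\to\Pp^1$ is of type $(0,1,1)$. The one genuinely delicate point is the identity $\alpha^{4}=\id$: it forces one to track the integer exponent of $t$ modulo the defining relation $t^{l}=\psi(y)\psi(iy)^{a}$ simultaneously with the exponents of $\psi(y)$ and $\psi(iy)$, and the cancellation is exactly what the choice $kl-a^{2}=1$ (equivalently $a^{2}\equiv-1\bmod l$) is designed to produce.
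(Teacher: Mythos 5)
Your proof is correct and follows essentially the same route as the paper's: explicit computation of the ramification of the Kummer cover attached to $\psi(y)\psi(iy)^a$, verification that $\alpha$ lifts to $Y$ via the identity $kl-a^2=1$, and the conjugation relation between $\alpha$ and $\sigma$. The only cosmetic differences are that you obtain $\alpha\sigma\alpha^{-1}=\sigma^{-a}$ where the paper writes $\sigma^{a}$ (a composition-order convention; since $-a=a^{3}$ also has order $4$ modulo $l$, both give $G_{l,4}$), and that you verify $\alpha^{4}=\id$ by direct exponent bookkeeping whereas the paper simply observes $\alpha^{2}:(x,y,t)\mapsto(x,-y,\tfrac1t)$.
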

\begin{proof}
On vérifie que le revêtement est bien celui attendu. Le revêtement intermédiaire $(x,y,t)\mapsto(x,y)$ est ramifié en les zéros et pôles de $\psi$, à chaque fois d'ordre $l$. On a 4 points de ramification $(1,1)$, $(1,-1)$, $(1,i)$ et $(1,-i)$. Ces quatre points ont même image, 1, par $(x,y)\mapsto x$.  Enfin, on a bien une ramification d'ordre 4 au-dessus des points $x=0$ et $x=\infty$.
On calcule ensuite\vspace*{-7pt}
\[
\psi(iy)\psi(i\cdot iy)^\alpha)=\psi(-y)^\alpha\psi(iy)=\psi(y)^{-\alpha}\psi(iy)^{kl-a^2}
=(\psi(y)\psi(iy)\big)^{-a}\psi(iy)^{kl}=\big(t^{-a}\psi(iy)^k\big)^l,\vspace*{-7pt}
\]
ce qui montre l'existence du morphisme $\alpha$, d'ordre 4 (on vérifie que $\alpha^2:(x,y,t)\mapsto(x,-y,\tfrac1t)$). Quant à $\sigma$, il découle comme d'habitude de la forme des extensions de type Kummer. Il reste à montrer que ces deux morphismes engendrent bien $G_{l,4}$. Pour cela, on effectue le calcul suivant.\vspace*{-5pt}
\[
\alpha\sigma\alpha^{-1}(x,y,t)=\alpha\sigma(x,-iy,t^a\psi(iy)^{-k})=\alpha(x,-iy,\zeta_l^at^a\psi(iy)^{-k})=(x,y,\zeta_l^{a}t)=\sigma^{a}(x,y,t).\qedhere
\]
\end{proof} 

Il reste à déterminer une équation de $X=Y/\langle\alpha\rangle$ de façon analogue  à ce que l'on a vu pour le type \hyperref[type:01111]{$(0,1,1,1,1)$}, quoiqu'un peu plus délicat. En effet, il s'agit de calculer le polynôme minimal d'un élément stabilisé par $\alpha$, par exemple\vspace*{-7pt}
\[z_t:=t+\alpha(t)+\alpha^2(t)+\alpha^3(t)=t+t^{-a}\psi(iy)^k+t^{-1}+t^a\psi(iy)^{-k}.\vspace*{-7pt}\]
On en déduit un algorithme partageant des techniques avec l'algorithme~\ref{algo:mult:reelle}, en se plaçant sur $\Q(i)$, \cad en introduisant un élément $i$ et la relation $i^2=-1$.

\vspace*{-5pt}
\begin{algo}[H]\caption{Courbe à multiplication réelle par $\Q(\zeta_l^{(4)})$ (I)}\label{algo:mult:reelle4}
\noindent\textbf{Entrée} : $l$
\begin{algorithmic}[1]
\State{\textbf{Factoriser} $u^2+1\mod l$ et choisir $a$, une racine. }
\State{$\psi(y)\leftarrow (y-1)(y-i)/(y+1)/(y+i)$}
\State{$\Pi(y,z)\leftarrow\Res_t\big(t^l-\psi(y)\psi(iy)^a,t+t^{-a}\psi(iy)^k+t^{-1}+t^a\psi(iy)^{-k}-z\big)$}\label{algo:mult:reelle4:ligne3}
\State{$X(x,z)\leftarrow \Pi(y,z)\mod\!_y\  y^4-x$}\label{algo:mult:reelle4:ligne4}
\end{algorithmic}
\noindent\textbf{Sortie} $X$.
\end{algo}
\vspace*{-8pt}

Notons l'abus de notation de la ligne~\ref{algo:mult:reelle4:ligne3} : on prend le résultant des numérateurs des quantités décrites, et comme pour chaque calcul de résultant, il est possible qu'il faille éliminer les facteurs non pertinents. Enfin, pour la ligne~\ref{algo:mult:reelle4:ligne4}, $\Pi$ est par construction un polynôme invariant par $\alpha$ et donc n'a  que des puissances 4\ieme de $y$, ce qui justifie le fait que $X$ soit un polynôme en $x$ et $z$ uniquement.
\begin{expl} On choisit $l=13$ et $a=5$ d'ordre 4 dans $(\Zz{13})^*$ et on travaille sur $\Q(i)$.  On trouve comme équation de $Y$\vspace*{-10pt}
\[
(Y)\ \Bigg\{\begin{array}{l@{\,=\,}l}
y^4&x\\
t^{13}&\frac{(y+i)^4(y-1)^6}{(y-i)^4(y+1)^6}
\end{array}
\vspace*{-4pt}\]
puis une équation de $X$, de genre 3,

{\footnotesize
\relpenalty=0000
\binoppenalty=0000
\noindent
$
(x\!-\!1)^6z^{13}\!-\!26(x\!-\!1)^6z^{11}\!+\!221(x\!-\!1)^6z^9\!-\!104(x^2\!+\!14x\!+\!1)(x\!-\!1)^4z^8\!-\!26(31x^2\!-\!126x\!+\!31)(x\!-\!1)^4z^7\!+\!884(x^2\!+\!14x\!+\!1)(x\!-\!1)^4z^6\!+\!26(31x^2\!+\!66x\!+\!31)(x\!-\!1)^4z^5\!+\!52(\!-\!37x^4\!-\!380x^3\!+\!256ix^3\!+\!1858x^2\!-\!380x\!-\!256ix\!-\!37)(x\!-\!1)^2z^4\!+\!52(25x^4\!+\!252x^3\!+\!4566x^2\!+\!252x\!+\!25)(x\!-\!1)^2z^3\!-\!416(x^4\!-\!44x^3\!+\!32ix^3\!-\!810x^2\!-\!44x\!-\!32ix\!+\!1)(x\!-\!1)^2z^2\!+\!13(5x^4\!-\!1684x^3\!+\!15646x^2\!-\!1684x\!+\!5)(x\!-\!1)^2z\!-\!4\!+\!4824x\!-\!141312ix^4\!+\!5120ix^5\!-\!5120ix\!+\!141312ix^2\!+\!4824x^5\!+\!42180x^4\!-\!4x^6\!-\!356144x^3\!+\!42180x^2.
$
}%

\noindent
Cette dernière, non hyperelliptique possède une équation quartique plus simple,\vspace*{-3pt}
\[u^3+2u^2v+2u^2-uv^3-2uv^2-2uv-2u+v^4+v^2-v+2.\vspace*{-3pt}\]
En réduisant par exemple cette équation dans $\F_{53}$, on trouve une jacobienne d'une courbe non supersingulière, dont le polynôme caractéristique de la trace du Frobenius,\vspace*{-4pt}
\[r^3-6r^2-40r-8\vspace*{-4pt}\]
définit le corps de nombres $\Q(\zeta_{13}^{(4)})$. On a choisi expressément $p=53$ de façon à ce que $\F_p$ possède les racines 13\ieme de l'unité. La table~\ref{table:expl}, rassemble quelques exemples de polynômes caractéristiques de la trace du Frobenius pour des réductions de $X$ dans divers $\F_p$.
\begin{table}[ht]
\centering
\begin{tabular}{|c|c||c|}
\hline\multirow{2}{*}{$\F_p$}&Ordre de&
{Polynôme caractéristique}\\[-3pt]
&$l\in(\Zz{13})^*$&de la trace du Frobenius\\
\hline
$53$&$1$&$r^3-6r^2-40r-8$\\
$181$&$2$&{$r^3+39r^2+416r+689$}\\
$73$&$4$&$r^3-12r^2-43r+3269
$\\
$29$&$3$&$r^3-87r-80$, ($t^6-80t^3+29^3$)\\
$17$&$6$&$r^3-51r-52$, ($t^6 - 52t^3 + 17^3$)\\
$37$&$12$&$r^3-111r-306$, ($t^6 - 306t^3 + 37^3$)\\\hline
\end{tabular}
\caption{Polynômes caractéristiques pour des réductions de courbes $X$ à multiplication réelle par $\Q(\zeta_{13}^{(4)})$ sur des corps finis (I).}
\label{table:expl}
\end{table}

Pour les trois premières lignes, on a en fait $\zeta_{13}^{(4)}\in\F_p$ et les trois polynômes caractéristiques définissent le corps de nombres  $\Q(\zeta_{13}^{(4)})$. Par contre, on voit pour les trois dernières lignes que la courbe devient supersingulière. On vérifie, en calculant le résultant en $x$ des polynômes caractéristiques $P(x)$ et $z-x^k$, que sur une extension de degré $k=3$, on obtient en fait des jacobiennes isogènes à un produit de 3 courbes elliptiques identiques. On peut aussi le vérifier sur le polynôme caractéristique de la trace du Frobenius. Par exemple\vspace*{-4pt}
\begin{align*}&\Res_x(x^6-80x^3+29^3,z-x^3)=(z^2-80z+29^3)^3,\\[-3pt]
&\Res_t(t^3-87t-80,\Res_x(z-x^3,tx-x^2-29)=-(z^2-80z+29^3)^3.\\[-20pt]
\end{align*}
Dès lors, $\End_\Q(\Jac(X))$ contient au moins l'algèbre $\Mc_3(\Q)$ qui, grâce aux matrices compagnons, contient tous les corps de nombres de degré 3, et en particulier $\Q(\zeta_{13}^{(4)})$. On remarque enfin que dans ces trois cas, $\zeta_{13}^{(4)}\in\F_{l^3}$, extension minimale, de degré 3.
\end{expl}

\subsection{Type (1,1,2,2)}\label{type:1122}
On cherche ici un recouvrement $H\rightarrow\Pp^1$ de degré 4, ramifié en 4 points, deux d'ordre 2 et deux d'ordre 4. On peut calculer le genre  de $H$ par la formule d'Hurwitz, même s'il y a plusieurs possibilités. En effet, pour la ramification d'ordre 2, on peut avoir 2 points d'ordre 2 dans la fibre ou 1 point d'ordre 2 et deux autres points. Pour des raisons de parité, cela laisse deux possibilités
\begin{align*}
g(H)&=\tfrac12(2+4(2g(\Pp^1)-2)+2(4-1)+4(2-1))=2\\[-5pt]
\intertext{ou}\\[-30pt]
g(H)&=\tfrac12(2+4(2g(\Pp^1)-2)+2(4-1)+2(2-1))=1.
\end{align*}
La deuxième possibilité ne nous intéresse pas ici car on veut ensuite un revêtement non ramifié d'ordre $l$, qui serait alors encore une courbe elliptique, ce qui n'est pas possible pour $Y$. 
\begin{prop}
Soit $\tau\in\Q$ un paramètre, différent de $2$ et $-2$. et soit $H$ la  courbe hyperelliptique de genre $2$, d'équation $y^2=x(x^4+\tau x^2+1)$. Alors, l'application $(x,y)\mapsto x^2$ est un revêtement de $\Pp^1$, de degré $4$ et de type $(1,1,2,2)$. Son groupe de Galois est $\Zz4$, engendré par l'automorphisme $(x,y)\mapsto(-x,iy)$.
\end{prop}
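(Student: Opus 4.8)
The plan is to exhibit the order‑four automorphism directly, identify the quotient with $\Pp^1$, and then read off the branching from the two intermediate degree‑two maps in the factorisation $H\to\Pp^1_x\to\Pp^1_w$, $(x,y)\mapsto x\mapsto x^2$. First I would record why $H$ is smooth of genus $2$: the hyperelliptic polynomial $x(x^4+\tau x^2+1)$ has degree $5$, and it is separable exactly when $x^4+\tau x^2+1$ is squarefree and prime to $x$; writing $x^4+\tau x^2+1=g(x^2)$ with $g(T)=T^2+\tau T+1$, this holds iff $\operatorname{disc}(g)=\tau^2-4\neq0$, which is precisely the hypothesis $\tau\neq\pm2$ (it also guarantees that the two roots $w_1,w_2$ of $g$ are distinct and nonzero). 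Then I would check that $\rho:(x,y)\mapsto(-x,iy)$ is an automorphism of $H$, since $(iy)^2=-y^2=-x(x^4+\tau x^2+1)=(-x)\bigl((-x)^4+\tau(-x)^2+1\bigr)$, that $\rho^2:(x,y)\mapsto(x,-y)$ is the hyperelliptic involution, and hence that $\rho$ has order exactly $4$ with $\langle\rho\rangle\cong\Zz4$ acting faithfully on $H$.

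Next I would identify the quotient. The function $w:=x^2\in k(H)$ is $\rho$-invariant, and $[k(H):k(w)]=[k(H):k(x)]\,[k(x):k(w)]=2\cdot2=4=|\langle\rho\rangle|$; since the action is faithful, $k(w)$ is exactly the fixed field $k(H)^{\langle\rho\rangle}$, so $(x,y)\mapsto x^2$ realises $H\to H/\langle\rho\rangle=\Pp^1$ as a Galois cover of degree $4$ with group $\Zz4$. In particular the only candidates for branch points are $w\in\{0,\infty,w_1,w_2\}$, since over any other value the fibre consists of the four distinct points $(\pm\sqrt w,\pm\sqrt{w(w^2+\tau w+1)})$.

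It remains to compute the type. The ramification of $H\to\Pp^1_x$ is concentrated at the six Weierstrass points $x=0$, the four roots of $x^4+\tau x^2+1$, and the point $\infty_H$ at infinity of the odd-degree model, while $x\mapsto x^2$ ramifies only over $x=0,\infty$; as $x=0$ and $x=\infty$ are themselves Weierstrass points, the ramification locus of the composite is exactly these six points. At $(0,0)$ the coordinate $y$ is a local parameter with $x\sim y^2$, so $w=x^2\sim y^4$ and the ramification index is $4$; likewise at $\infty_H$, in the chart $u=1/x$, $v=y/x^3$ one gets $v^2=u(1+\tau u^2+u^4)$, whence $u\sim v^2$ and $w=u^{-2}\sim v^{-4}$, again index $4$ — these give the two totally ramified points. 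The remaining four Weierstrass points are the $\pm\sqrt{w_i}$ ($i=1,2$); at each of them $y$ is a local parameter and $w-w_i=(x-\sqrt{w_i})(x+\sqrt{w_i})\sim(\text{unit})\,y^2$, index $2$, so over $w_1$ and over $w_2$ the fibre has two points with inertia of order $2$. Thus there are four branch points, with local monodromy of orders $4,4,2,2$; with $n=4$ the adapted type $\bigl(\tfrac n4,\tfrac n4,\tfrac n2,\tfrac n2\bigr)$ is $(1,1,2,2)$, as claimed. As a consistency check, Riemann--Hurwitz gives $2g(H)-2=4(-2)+(3+3+1+1+1+1)=2$.

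I do not expect a real obstacle here. The only steps needing care are the local analysis at $\infty_H$, where one must pass to the chart $(u,v)=(1/x,y/x^3)$ of the odd-degree hyperelliptic model before reading off the ramification index, and the verification that over $w_1$ and $w_2$ the ramification comes from the hyperelliptic double cover and not from $x\mapsto x^2$, so that the inertia groups there have order $2$ rather than $4$ — and that $w_1\neq w_2$ with $w_i\neq0$, which is exactly the content of the assumption $\tau\neq\pm2$.
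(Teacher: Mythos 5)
Your proof is correct and follows essentially the same route as the paper: a direct analysis of the ramification of $(x,y)\mapsto x^2$, finding total ramification of order $4$ over $0$ and $\infty$ and order-$2$ ramification at the four Weierstrass points lying over the two roots of $T^2+\tau T+1$, hence type $(1,1,2,2)$. You simply carry out in more detail the steps the paper leaves implicit (genus, faithfulness of $\langle(-x,iy)\rangle$, identification of the fixed field with $k(x^2)$, and the Riemann--Hurwitz check).
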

\begin{proof}
Ce revêtement  est ramifié en $0$ et $\infty$, de degré 4. De plus, notons $\omega$ une racine de $x^4+\tau x^2+1$. Alors, les quatre racines distinctes ($\tau\neq\pm2$)  sont $\pm\omega$ et $\pm\tfrac1\omega$. On a une ramification d'ordre 2 en ces points de \WS, et l'application $(x,y)\mapsto x^2$ identifie ces points par paires : le revêtement est  ramifié en deux points supplémentaires, $\omega^2$ et $\tfrac1{\omega^2}$, d'ordre 2, en plus des points $0$ et $\infty$ d'ordre 4.
\end{proof}

Ici, le groupe d'automorphismes de $H$ est $\Zz2\times\Zz4$, avec $(x,y)\mapsto(\tfrac1x,\tfrac y{x^3})$ en plus. Néanmoins, l'automorphisme $(x,y)\mapsto(-\tfrac1x,i\tfrac y{x^3})$ est d'ordre 4, mais ne convient pas ici car il ne stabilise pas les fibres.

La suite peut se traiter de manière similaire au type~\hyperref[type:111111]{$(1,1,1,1,1,1)$}, en construisant un revêtement non ramifié d'ordre $l$ grâce à un point de $\Jac(H)$ d'ordre $l$.
En fait, la situation est ici bien simplifiée par l'existence d'un morphisme non constant de $H$ vers une courbe elliptique $E$, quotient de $H$ par le groupe d'ordre 2 engendré par $(x,y)\mapsto(\tfrac1x,\tfrac{y}{x^3})$. La jacobienne de $H$ est isogène à un  produit de deux courbes elliptiques isomorphes à $E$. C'est en fait un cas particulier, pour $l=2$ des courbes qui apparaissent dans~\cite{top} et que l'on a croisées en~\ref{type:0011}. On vérifie de manière immédiate la proposition suivante.

\begin{prop}\label{decomp:H1:E2}
Il existe un morphisme entre la courbe $H$ et la courbe elliptique $E$ d'équation $Y^2=(X+2)(X^2+\tau-2)$. Il est donné par les équations (affines)\vspace*{-5pt}
\begin{equation}
\begin{array}{l@{\,=\,}l}
X&x+\frac1x\\
Y&\frac{y(x+1)}{x^2}.
\end{array}\label{type1122:mor}
\end{equation}
\end{prop}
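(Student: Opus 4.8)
Le plan est de procéder par vérification directe : on montre que la substitution de l'énoncé transforme l'équation de $E$ en une conséquence de celle de $H$. En posant $X=x+\tfrac1x$, on obtient aussitôt $X+2=\tfrac{(x+1)^2}{x}$ et $X^2-2=x^2+\tfrac1{x^2}$, d'où
\[
(X+2)(X^2+\tau-2)=\frac{(x+1)^2}{x}\cdot\frac{x^4+\tau x^2+1}{x^2}=\frac{(x+1)^2\,(x^4+\tau x^2+1)}{x^3}.
\]
Par ailleurs $Y^2=\tfrac{y^2(x+1)^2}{x^4}$, et en remplaçant $y^2$ par $x(x^4+\tau x^2+1)$ grâce à l'équation de $H$, on retombe exactement sur la même expression. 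Les formules \eqref{type1122:mor} définissent donc bien une application rationnelle de $H$ vers $E$.

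Il reste à s'assurer que $E$ est bien une courbe elliptique et que cette application rationnelle est un morphisme. Pour le premier point, le polynôme $(X+2)(X^2+\tau-2)$ est séparable dès que $X^2+\tau-2$ l'est (\cad $\tau\neq2$) et que $-2$ n'en est pas racine (\cad $\tau\neq-2$), ce qui est précisément l'hypothèse faite sur $\tau$ ; $E$ est alors lisse, de genre $1$. Pour le second point, $H$ étant une courbe lisse et $E$ une variété projective, toute application rationnelle $H\dashrightarrow E$ se prolonge en un morphisme partout défini ; les formules ci-dessus ne sont d'ailleurs indéterminées qu'aux points $x=0$ et aux points à l'infini de $H$.

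On précise enfin la nature géométrique de ce morphisme : c'est le passage au quotient par l'involution $\iota\colon(x,y)\mapsto\bigl(\tfrac1x,\tfrac y{x^3}\bigr)$. On commence par vérifier que $\iota$ est bien un automorphisme de $H$ en reportant dans $y^2=x(x^4+\tau x^2+1)$, puis que les deux fonctions $X=x+\tfrac1x$ et $Y=\tfrac{y(x+1)}{x^2}$ sont invariantes par $\iota$. Un calcul de degré élémentaire donne alors $[k(H):k(X,Y)]=2$, de sorte que $E$ est un modèle du quotient $H/\langle\iota\rangle$ ; c'est cela qui est à la source de l'assertion, faite juste avant l'énoncé, selon laquelle $\Jac(H)$ est isogène à un produit de deux copies de $E$.

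Il n'y a ici aucun réel obstacle : l'essentiel est un calcul immédiat. Le seul point demandant un minimum d'attention est le comportement aux points exclus ($x=0$ et l'infini), assuré par la lissité de $H$, ainsi que l'emploi de l'hypothèse $\tau\neq\pm2$ qui garantit que $E$ n'est pas singulière.
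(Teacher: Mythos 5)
Votre vérification est correcte et suit exactement la démarche du papier, qui se contente d'affirmer que la proposition « se vérifie de manière immédiate » : vous explicitez le calcul reposant sur l'identité $x(X+2)=(x+1)^2$ (que le papier mentionne juste après l'énoncé) et vous identifiez le morphisme comme le quotient par l'involution $(x,y)\mapsto\bigl(\tfrac1x,\tfrac y{x^3}\bigr)$, ce que le papier signale juste avant. Votre rédaction est simplement une version complète de cette vérification, y compris la discussion (correcte) de la lissité de $E$ pour $\tau\neq\pm2$ et du prolongement de l'application rationnelle en un morphisme.
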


Ce morphisme repose essentiellement sur l'identité $x(X+2)=(x+1)^2$, qui relie $x$ et $X$ à un carré près. En changeant les signes, on a aussi $x(X-2)=(x-1)^2$, ce qui donne la deuxième courbe isomorphe $E'$ d'équation $Y'^2=(X-2)(X^2+\tau-2)$, avec $Y'=\tfrac{y(x-1)}{x^2}$ et l'isomorphisme $(X,Y)\mapsto(-X,iY)$.
On se sert de la courbe elliptique $E$ pour construire un revêtement non ramifié de $H$ de degré $l$.

\begin{prop}\label{mult:reelle:4}
Soit $P$ un point de $E$ d'ordre $l$ et soit $a\in(\Zz l)^*$ d'ordre $4$. On considère une fonction $\phi(X,Y)$ de diviseur $l((P)-(O_E))$ et $\psi(x,y)$ la fonction correspondante sur $H$. On définit alors la courbe\vspace*{-5pt} 
\[%
(Y)\ \Bigg\{\begin{array}{l@{\,=\,}l}
y^2&x(x^4+\tau x^2+1)\\
t^l&\psi(x,y)\psi(-x,iy)^a.
\end{array}\vspace*{-5pt} 
\]
Alors, le revêtement $Y\rightarrow\Pp^1$, $(x,y,t)\mapsto x^2$ est de type $(1,1,2,2)$, de groupe de Galois $G_{l,4}$, engendré par les automorphismes\vspace*{-15pt} 
\begin{align*}
\alpha:(x,y,t)&\mapsto(-x,iy,t^{-a}\psi(-x,iy)^k\Phi(x)^a)\\
\sigma:(x,y,t)&\mapsto(x,y,\zeta_l t)\\[-23pt]
\end{align*}
où $k$ est tel que $kl-a^2=1$, et $\Phi(x)$ est une fonction que l'on précisera.
 
\end{prop}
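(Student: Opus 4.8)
The plan is to follow the same three-step scheme as in the proofs of types $(0,1,1,1,1)$ and $(0,1,1)$: show that $Y\to H$ is an unramified degree-$l$ cover; deduce from the previous proposition that $Y\to\Pp^1$ has type $(1,1,2,2)$; and finally specify $\Phi$ and verify that $\alpha$ and $\sigma$ are automorphisms of $Y$ generating $G_{l,4}$.

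For the first two steps, write $\pi\colon H\to E$ for the degree-$2$ map of Proposition~\ref{decomp:H1:E2} and $\beta\colon(x,y)\mapsto(-x,iy)$ for the automorphism of $H$ generating the Galois group of $(x,y)\mapsto x^2$. Since $\psi=\pi^*\phi$ with $\div(\phi)=l((P)-(O_E))$, we have $\div(\psi)=l\,\pi^*((P)-(O_E))$ and $\div(\psi\circ\beta)=\beta^*\div(\psi)$, so the divisor of $\psi(x,y)\psi(-x,iy)^a$ is $l$ times a degree-$0$ divisor on $H$; hence $t^l=\psi(x,y)\psi(-x,iy)^a$ is unramified over $H$, of degree exactly $l$ because $P$ has order $l$ (which one sees via the decomposition $\Jac(H)\sim E\times E'$ relating the class of $\pi^*((P)-(O_E))$ to that of $(P)-(O_E)$), so $Y$ is irreducible. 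As $Y\to H$ is unramified and $H\to\Pp^1$ has type $(1,1,2,2)$ with branch points $0,\infty,\omega^2,1/\omega^2$, the cover $Y\to\Pp^1$ has the same branch locus, and over each branch point the inertia subgroup of $G_{l,4}$ surjects onto --- and, ramification in $Y\to H$ being trivial, injects into --- the inertia of $H\to\Pp^1$ inside $\Zz4=G_{l,4}/\langle\sigma\rangle$, hence has the same order. The two order-$4$ points give $d_i=1$ and the two order-$2$ points give $d_i=2$, using that every lift of the order-$2$ element of $\Zz4$ to $G_{l,4}$ has order $2$ because $a^2\equiv-1\bmod l$, i.e. $(\alpha^2\sigma^j)^2=\sigma^{j(a^2+1)}=1$. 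So $Y\to\Pp^1$ has type $(1,1,2,2)$.

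For the last step, $\sigma$ is clear. To specify $\Phi$: the norm $N_{k(H)/k(x)}(\psi)=\psi(x,y)\psi(x,-y)$ is a function of $x$ alone whose divisor on $\Pp^1_x$ is $l$ times a principal divisor --- indeed its divisor on $H$ is $l(\bar D+\iota^*\bar D)$ with $\bar D=[\pi^*((P)-(O_E))]$ and $\iota$ the hyperelliptic involution, and $\pi\circ\iota=[-1]_E\circ\pi$ gives $\iota^*\bar D=-\bar D$, so $\bar D+\iota^*\bar D$ is principal and so is its pushforward to $\Pp^1_x$. Writing $N_{k(H)/k(x)}(\psi)=c\,g(x)^l$ and rescaling $\psi$ by a constant (as with the modular curves in the type $(1,1,1,1,1,1)$) one may take $c=1$ and set $\Phi(x)=g(x)$, so that $\Phi(x)^l=\psi(x,y)\psi(x,-y)$. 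Then a direct computation using only $kl-a^2=1$ and $\psi(-x,iy)\psi(-x,-iy)=\Phi(-x)^l$ shows that $\alpha$ maps $Y$ into $Y$, that $\alpha^2\colon(x,y,t)\mapsto(x,-y,t^{-1}\Phi(x)\Phi(-x)^a)$ and $\alpha^4=\id$ (so $\alpha$ has order $4$), and that $\alpha\sigma\alpha^{-1}=\sigma^{\pm a}$, all extraneous powers of $\psi$ and $\Phi$ telescoping by $kl-a^2=1$; this is the defining relation of $G_{l,4}$ for a suitable order-$4$ element of $(\Zz l)^*$. Finally $\langle\sigma\rangle$ is normal of order $l$ in $\langle\alpha,\sigma\rangle$ with cyclic quotient of order $4$, so $\langle\alpha,\sigma\rangle$ is a quotient of $G_{l,4}$ of order dividing $4l$; since $[k(Y):k(x)]=4l$ it equals $G_{l,4}$.

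The main obstacle is the bookkeeping in the last step: checking that $\alpha$ is well defined and computing $\alpha^2$, $\alpha^4$ and $\alpha\sigma\alpha^{-1}$ involves many exponents of $\psi$ and $\Phi$ that cancel only upon repeated use of $kl-a^2=1$, and one must deal with the constant $c$ in the norm of $\psi$ (renormalising $\psi$, or carrying a factor $\kappa$ as in the modular-curves paragraph); a secondary point is the non-triviality in $\Jac(H)$ of the $l$-torsion class forcing irreducibility of $Y$, which one reduces to the order of $P$ via $\Jac(H)\sim E\times E'$.
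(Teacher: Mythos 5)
Your proof follows essentially the same route as the paper's: $\Phi$ is defined via the norm $\psi(x,y)\psi(x,-y)$ being an $l$-th power on $\Pp^1_x$ (the paper handles the constant $c$ by rescaling $\phi$, exactly as you do), the unramifiedness of $Y\to H$ comes from the divisor of $\psi(x,y)\psi(-x,iy)^a$ having support with multiplicities divisible by $l$, and $\alpha$ is obtained by the same telescoping computation using $kl-a^2=1$, ending with $\alpha\sigma\alpha^{-1}=\sigma^a$. Your added remarks (irreducibility of $Y$, the order of the inertia lifts, the explicit form of $\alpha^2$) are correct elaborations of points the paper leaves implicit.
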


\begin{proof}
Comme $lP=O_E=lO_E$ le diviseur $l((P)-(O_E))$ est principal, ce qui justifie l'existence de $\phi$ puis $\psi$ par composition avec le morphisme~(\ref{type1122:mor}). Comme $\psi(x,y)\psi(x,-y)$ est stable par l'involution hyperelliptique, c'est une fonction de $x$ seulement, dont le diviseur, vu sur $\Pp^1$, ne possède que des points d'ordre $l$. C'est donc nécessairement le diviseur d'une puissance $l$-ième, que l'on note $\Phi(x)^l$.

Le revêtement $Y\rightarrow H$ est non ramifié, car le diviseur de $\psi(x,y)\psi(-x,iy)^a$ a par construction un support \og multiple \fg de $l$. C'est un revêtement habituel de type Kummer, de degré $l$ et de groupe de Galois $\Zz l$. Ainsi, la composition par le revêtement $H\rightarrow\Pp^1$ est bien du type annoncé $(1,1,2,2)$.
Pour justifier que ce revêtement composé est galoisien de groupe de Galois $G_{l,4}$ on montre que l'automorphisme de $H$ de degré 4 se \og remonte \fg{} sur $Y$,\vspace*{-8pt} 
\begin{align*}
\psi(-x,iy)\psi(x,-y)^a=\psi(-x,iy)\psi(x,y)^{-a}\Phi(x)^{al}
&=\big(\psi(x,y)\psi(-x,iy)^a\big)^{-a}\psi(-x,iy)^{kl}\Phi(x)^{al}\\
&=\big(t^{-a}\psi(-x,iy)^k\Phi(x)^a\big)^l\\[-23pt]
\end{align*}
ce qui justifie l'existence de l'automorphisme $\alpha$. On calcule enfin\vspace*{-6pt} 
\begin{align*}
\alpha\sigma\alpha^{-1}(x,y,t)&=\alpha\sigma(-x,-iy,t^a\psi(-x,iy)^{-k}\Phi(-x))\\&=\alpha(-x,-iy,\zeta_l^at^a\psi(-x,iy)^{-k}\Phi(-x))=(x,y,\zeta_l^{a}t)=\sigma^{a}(x,y,t).\qedhere
\end{align*}
\end{proof}

Le calcul d'une équation de la courbe $X/\langle\alpha\rangle$ se fait comme précédemment en calculant le polynôme minimal d'un élément invariant par $\langle\alpha\rangle$, par exemple\vspace*{-8pt}
\[%
z_t:=\!\sum_{i=0}^3\alpha^i(t)=t+t^{-a}\psi(-x,iy)^k\Phi(x)^a+\frac{\Phi(x)\Phi(-x)^a}t+t^a\psi(-x,iy)^{-k}\Phi(-x).\vspace*{-5pt}
\] 
Pour calculer une équation de  $X$, on prend $\Res_t(z-z_t,t^l-\psi(x,y)\psi(-x,iy)^a)$, qui est invariant par $(x,y)\mapsto(-x,iy)$ et donc par $y\mapsto-y$ ; on élimine $y$ en prenant le reste modulo $y^2-x(x^4+\tau x^2+1)$. Le résultat est invariant par $x\mapsto -x$ et une équation de $X$ s'obtient en prenant le reste par $w-x^2$.
Le tout aboutit à la proposition suivante.

\begin{prop}
Les courbes construites ci-dessus sont sont une famille à un paramètre, $\tau$, dont la jacobienne est à multiplication réelle par $\Q(\zeta_{l}^{(4)})$.
\end{prop}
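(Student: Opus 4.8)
Le plan est de combiner la Proposition~\ref{mult:reelle:4} avec le théorème d'Ellenberg rappelé au début du chapitre. On part du fait, fourni par la Proposition~\ref{mult:reelle:4}, que pour tout $\tau\neq\pm2$ le revêtement $Y\rightarrow\Pp^1$, $(x,y,t)\mapsto x^2$, construit ci-dessus a pour groupe de Galois le groupe métacyclique $G_{l,4}$ et est de type $(1,1,2,2)$ ; de plus $H=\langle\alpha\rangle$ est d'ordre $4=n$ et $X=Y/H$. Le type $(1,1,2,2)$ figurant dans la table~\ref{table:types} à la ligne $n=4$, le théorème d'Ellenberg s'applique et donne aussitôt que $\Jac(X)$ est à multiplication réelle par $\Q(\zeta_l^{(4)})$. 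On vérifie au passage que $\Q(\zeta_l^{(4)})$ est totalement réel, comme l'exige la définition~\ref{mult:reelle} : $k$ étant d'ordre $4$ dans $(\Zz l)^*$, l'élément $k^2$ en est l'unique élément d'ordre $2$, \cad $-1$, de sorte que la conjugaison complexe fixe $\zeta_l^{(4)}$ et $\Q(\zeta_l^{(4)})\subset\Q(\zeta_l^+)$.

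Il restera à justifier le caractère familial. On observera que les équations de $Y$, la fonction auxiliaire $\Phi$, puis — après les calculs de résultant et de réduction décrits plus haut — une équation plane de $X$, ont des coefficients qui s'expriment rationnellement en $\tau$, en l'entier fixé $a$ et en les coordonnées d'un point $P$ d'ordre exactement $l$ de la courbe elliptique $E$ d'équation $Y^2=(X+2)(X^2+\tau-2)$. En choisissant $P$ dans une clôture algébrique de $\Q(\tau)$ — ou, lorsque la taille du calcul l'impose, sur un corps fini convenable, comme ailleurs dans l'article — on mène l'algorithme d'élimination et l'on obtient, pour $\tau$ hors d'un ensemble fini, la courbe $X_\tau$ ; le seul paramètre réellement libre est $\tau$.

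La partie \og multiplication réelle \fg{} proprement dite ne présente donc aucune difficulté une fois le type et le groupe de Galois identifiés dans la Proposition~\ref{mult:reelle:4} : c'est là tout l'apport de la construction d'Ellenberg. Le principal obstacle est plutôt de nature technique, dans le contrôle de la famille : il faut s'assurer que, pour $\tau$ générique, le revêtement $Y\rightarrow H$ est bien irréductible de degré $l$ — \cad que $\psi(x,y)\psi(-x,iy)^a$ n'est pas une puissance $l$-ième dans $k(H)$, autrement dit que la classe dans $\Jac(H)$ de $D+a\,\beta^*D$ est non nulle, où $D$ est l'image réciproque de $(P)-(O_E)$ par $H\rightarrow E$ et $\beta\colon(x,y)\mapsto(-x,iy)$ — et que les genres $g(Y)=l+1$ et $g(X)=\tfrac{l-1}4$ ne chutent pas pour une valeur spéciale de $\tau$. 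On pourra recouper ces valeurs par la formule d'Hurwitz : le revêtement $Y\rightarrow H$ étant non ramifié de degré $l$ au-dessus de $H$, qui est de genre $2$, on a $g(Y)=l+1$ ; et, $Y\rightarrow X$ étant de degré $4$, l'égalité $g(X)=\tfrac{l-1}4=[\Q(\zeta_l^{(4)}):\Q]$ imposée par le théorème d'Ellenberg équivaut à ce que ce revêtement soit de ramification totale égale à $10$, ce que l'on contrôle en localisant les points fixes de $\alpha$ et de $\alpha^2$ sur $Y$.
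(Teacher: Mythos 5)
Votre démonstration est correcte et suit essentiellement la même voie que celle de l'article : la multiplication réelle découle de la proposition~\ref{mult:reelle:4} combinée au théorème d'Ellenberg (le type $(1,1,2,2)$ figurant dans la table pour $n=4$), et le caractère « famille à un paramètre » vient de l'existence, pour $\tau$ générique, d'un point de $l$-torsion de $E$ donné par une racine du polynôme de $l$-division (ou via $X_1(l)$), ce qui est précisément le contenu — beaucoup plus laconique — de la preuve du texte. Vos vérifications supplémentaires (totale réalité de $\Q(\zeta_l^{(4)})$, irréductibilité de l'extension de Kummer, contrôle des genres par Hurwitz) sont exactes mais ne changent pas l'argument.
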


\begin{proof}
Il suffit de trouver une fonction $\psi$ qui provient un point de $l$\ti torsion de $E$, donné par une racine du polynôme de $l$\ti division. On peut aussi utiliser $X_1(l)$ qui paramétrise les courbes elliptiques possédant un point de $l$\ti{}torsion.
\end{proof}


Si comme dans le cas~\ref{type:111111} tous les calculs exposés ci-dessus sont explicites, ils semblent exiger une puissance trop importante à l'heure actuelle. On peut toutefois en tirer un algorithme qui fournit des courbes à multiplications réelles par $\Q(\zeta_l^{(4)})$ sur un corps fini $\F_p$, aléatoirement parmi cette famille à 1 paramètre.
Pour s'assurer que les courbes sont définies sur $\F_p$ on choisit $p\equiv1\mod 4$ afin de disposer d'une racine carrée de $-1$ dans $\F_p$. 
Enfin, une difficulté peut venir de $\Phi$ : définie à partir d'un diviseur, on peut avoir $\phi(x,y)\phi(x,-y)=k\Phi(x)^l$. Mais alors, il suffit de remplacer $\phi$ par $\tilde\phi=k^{\frac{l-1}2}\phi$. Ce détail réglé, on en déduit l'algorithme suivant.

\begin{algo}\caption{Courbes à multiplication réelle par $\Q(\zeta_l^{(4)})$ (II)}\label{algo:mult:reelle4b}
\noindent\textbf{Entrée} : $l,p$
\begin{algorithmic}[1]
\State{\textbf{Factoriser} $u^2+1\mod l$ et choisir $a$, une racine. }
\Repeat
\State{\textbf{Choisir $\tau\in\F_p$}}
\State{$E\leftarrow Y^2-(X+2)(X^2+\tau-2)$}
\Until{$\#E\equiv0\mod l$}
\State{\textbf{Choisir} $P\in E[l]\setminus\{O_E\}$}
\State{\textbf{Calculer} $\phi(x,y)$ vérifiant $\div(\phi)=l((P)-(O_E))$ de coefficient dominant adéquat}
\State{$\psi(x,y)\leftarrow\phi(x+\tfrac1x,\tfrac{y(x+1)}{x^2})$ et $\Phi(x)\leftarrow(x+1/x-x_P)$}
\State{$\Pi\leftarrow\Res_t(z-z_t,t^l-\psi(x,y)\psi(-x,iy)^a)$}
\State{$X(w,z)\leftarrow \big(\Pi \mod\!_{y}\ y^2-x(x^4+\tau x^2+1)\big) \mod\!_x\ w-x^2$}
\end{algorithmic}
\noindent\textbf{Sortie} $X$.
\end{algo}
\begin{expl}On donne, dans la table~\ref{table:eq:quart}, des équations quartiques à multiplication réelle par $\Q(\zeta_{13}^{(4)})$ pour différentes valeurs de $p$. On fournit dans la table~\ref{table:expl2} les polynômes caractéristiques du Frobenius, permettant de vérifier que les courbes obtenues sont à multiplication complexe par une extension quadratique de $\Q(\zeta_{13}^{(4)})$.
\begin{table}[ht]
\centering
\begin{tabular}{|c|c||l|}\hline
$\F_p$&$\tau$&\hfill Équation quartique\hfill\phantom{.} \\\hline
\multirow{2}{*}{$53$}&\multirow{2}{*}{$7$}&{\footnotesize$u^{4}+42u^{3}v+38u^{2}v^{2}+5uv^{3}+7v^{4}+37u
^{3}+26u^{2}v+37uv^{2}$}\\[-4pt]
&&{\footnotesize$\phantom{u^4}+24v^{3}+13u^{2}+41uv+37v^
{2}+21u+40v+26$}\\\hline
\multirow{2}{*}{$181$}&\multirow{2}{*}{$24$}&{\footnotesize$u^{4}+28u^{3}v+5u^{2}v^{2}+152uv^{3}+99v^{4}+151
u^{3}+126u^{2}v+22uv^{2}$}\\[-4pt]
&&{\footnotesize$\phantom{u^4}+34v^{3}+92u^{2}+158uv+157
v^{2}+74u+140v+61$}\\\hline
\multirow{2}{*}{$73$}&\multirow{2}{*}{$32$}&{\footnotesize$u^{4}+71u^{3}v+3u^{2}v^{2}+5uv^{3}+70v^{4}+70u
^{3}+13u^{2}v+49uv^{2}$}\\[-4pt]
&&{\footnotesize$\phantom{u^4}+56v^{3}+20u^{2}+32uv+18v^
{2}+39u+59v+68$}\\\hline
\multirow{2}{*}{$29$}&\multirow{2}{*}{$19$}&{\footnotesize$u^{4}+4u^{3}v+26u^{2}v^{2}+3uv^{3}+7v^{4}+12u^
{3}+24u^{2}v+28uv^{2}$}\\[-4pt]
&&{\footnotesize$\phantom{u^4}+26v^{3}+5u^{2}+6uv+3v^{2}+
24u+22v+17$}\\\hline
\multirow{2}{*}{$101$}&\multirow{2}{*}{$15$}&{\footnotesize$u^{4}+37u^{3}v+85u^{2}v^{2}+77uv^{3}+97v^{4}+65{
u}^{3}+10u^{2}v+57uv^{2}$}\\[-4pt]
&&{\footnotesize$\phantom{u^4}+13v^{3}+100u^{2}+67uv+86{
v}^{2}+75u+12v+54$}\\\hline
\multirow{2}{*}{$41$}&\multirow{2}{*}{$3$}&{\footnotesize$u^{4}+19u^{3}v+4u^{2}v^{2}+2uv^{3}+5v^{4}+28u^
{3}+11u^{2}v$}\\[-4pt]
&&{\footnotesize$\phantom{u^4}+33v^{3}+17u^{2}+30uv+29v^{2}+32v+29$}\\\hline
\end{tabular}
\caption{Quartiques à multiplication réelle par $\Q(\zeta_{13}^{(4)})$ sur des corps finis.}\label{table:eq:quart}
\end{table}

Dans les trois premières lignes de la table~\ref{table:expl2}, $\zeta_{13}^{(4)}$ est dans le corps de base $\F_p$ : le corps de rupture du polynôme caractéristique du Frobenius est bien une extension quadratique de $\Q(\zeta_{13}^{(4)})$. Dans les trois dernières lignes, on obtient une courbe supersingulière dont la jacobienne, sur une extension de degré 3, est isogène au cube d'une courbe elliptique.
\begin{table}[ht]
\centering
\begin{tabular}{|c|c|c||c|}
\hline\multirow{2}{*}{$\F_p$}&Ordre de&\multirow{2}{*}{$\tau$}&
{Polynôme caractéristique }\\[-3pt]
&$l\in(\Zz{13})^*$&&de la trace du Frobenius\\
\hline
$53$&$1$&$7$&{ $r^3+20r^2+77r-46$}\\
$181$&$2$&$24$&{ $r^3+26r^2+13r-1625$}\\
$73$&$4$&$32$&{$r^3+40r^2+529r+2315$}\\
$29$&$3$&$19$&$r^3-87r+63$, ($t^6 + 63t^3 + 29^3$)\\
$101$&$6$&$17$&$r^3-303r+286$, ($t^6 + 286t^3 + 101^3$)\\
$41$&$12$&$12$&$r^3-123r-30$, ($t^6 - 30t^3 + 41^3$)\\\hline
\end{tabular}
\caption{Polynômes caractéristiques pour des réductions de courbes $X$ à multiplication réelle par $\Q(\zeta_{13}^{(4)})$ sur des corps finis (II).}
\label{table:expl2}
\end{table}
\end{expl}

\vspace*{-15pt}
\section{Multiplication réelle par \texorpdfstring{$\Q(\zeta_l^{(k)})$ pour $k=6$, $8$ et $10$.}{Q(zeta\_l(k)) pour k=6, 7 et 10.}}

On peut donner un traitement similaire à la section~\ref{type:1122}, notamment dans la manière de construire la courbe $Y$ comme une extension de Kummer d'un produit de fonctions, où l'on fait agir l'automorphisme de la première courbe, afin d'obtenir une extension \emph{globalement} galoisienne. Ainsi, dans le cas~\ref{type:1122} on a dû considérer la fonction $\psi(x,y)\psi(-x,iy)^a$ et non pas $\psi(x,y)$. Ici, on peut faire de même, avec respectivement 3, 6, 4 et 5 facteurs pour les types  \hyperref[type:2233]{$(2,2,3,3)$}, \hyperref[type:112]{$(1,1,2)$}, \hyperref[type:114]{$(1,1,4)$} et \hyperref[type:125]{$(1,2,5)$}.

On propose à la place d'utiliser dans la manière du possible, les propriétés, notamment liées à la multiplication complexe, de la courbe sur laquelle on considère l'extension de Kummer.  

Pour les deux sections suivantes, l'indice est 6 et on considère des entiers premiers~$l$ congrus à 1 modulo 6. On se place sur $\Q(j)$ avec $j^3=1$, ou sur des corps finis de caractéristique $p\equiv 1\mod 3$ de manière à disposer d'une racine 3\ieme de l'unité.

\subsection{Type (2,2,3,3)}\label{type:2233}

On commence encore une fois par la formule d'Hurwitz pour trouver un revêtement $H\rightarrow\Pp^1$ de degré 6, ramifié en 4 points, de type (2,2,3,3),
\[2g(H)-2=6(2g(\Pp^1)-2+r(3-1)+s,\]
où $r\in\{1,2,3,4\}$ est le nombre de points ramifiés dans les fibres d'ordre 3 et $s\in\{2,4,6\}$ est le nombre de points ramifiés dans les fibres d'ordre 2, nécessairement pair, d'après cette même formule d'Hurwitz. Cela impose donc $g(H)\in\{0,1,2\}$ et comme on veut ensuite un revêtement non ramifié d'ordre $l$, on a nécessairement $r=4$ et $s=6$ de façon à obtenir $g(H)=2$.

\begin{prop}
Soit $\tau\not\in\{-2,2\}$ un paramètre et soit $H$ la courbe hyperelliptique de genre $2$, définie par l'équation
\[y^2=x^6+\tau x^3+1.\]
Alors, l'application $H\rightarrow\Pp^1$, $(x,y)\mapsto x^3$ est un revêtement de type $(2,2,3,3)$, de degré $6$, tout comme l'automorphisme
\[(x,y)\mapsto(jx,-y),\]
 qui engendre le groupe de Galois, $\Zz6$, de ce revêtement. 
\end{prop}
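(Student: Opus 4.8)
The plan is to identify $\rho\colon(x,y)\mapsto(jx,-y)$ as the Galois automorphism of the covering $(x,y)\mapsto x^3$, and then to read off the branching from the two elementary coverings through which it factors. First I would check that $\rho$ is a well-defined automorphism: since $j^3=1$ we get $(jx)^6+\tau(jx)^3+1=x^6+\tau x^3+1=(-y)^2$, so $\rho$ preserves the defining equation of $H$. Its powers are $\rho^2\colon(x,y)\mapsto(j^2x,y)$, $\rho^3\colon(x,y)\mapsto(x,-y)$ (the hyperelliptic involution) and $\rho^6=\id$; as $\rho^{*}$ has order $3$ on $x$ and order $2$ on $y$, its order is exactly $6$, so $\langle\rho\rangle\cong\Zz6$. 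Since $\rho^{*}(x^3)=(jx)^3=x^3$, the function $w:=x^3$ is $\rho$-invariant, i.e. $k(w)\subseteq k(H)^{\langle\rho\rangle}$. On the other hand the tower $k(w)\subset k(x)\subset k(H)$ has successive degrees $3$ and $2$, so $[k(H):k(w)]=6$; by Artin's lemma $[k(H):k(H)^{\langle\rho\rangle}]=|\langle\rho\rangle|=6$, and comparing the two we get $k(H)^{\langle\rho\rangle}=k(w)$. Hence $(x,y)\mapsto x^3$ is the quotient map $H\to H/\langle\rho\rangle=\Pp^1$, Galois of degree $6$ with group $\Zz6$ generated by $\rho$.

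To get the type I would locate the branch points by factoring the map as $H\to\Pp^1_x\to\Pp^1_w$, the double cover $y^2=x^6+\tau x^3+1$ followed by $x\mapsto x^3$. The second map ramifies only over $w=0$ and $w=\infty$, with one point of index $3$ in each fibre. The double cover ramifies exactly over the six roots of $x^6+\tau x^3+1$, which are the cube roots of the two roots $s_1,s_2$ of $s^2+\tau s+1$; these are distinct and nonzero because $\tau^2-4\neq0$ and $s_1s_2=1$, so none of the six is $0$ or $\infty$. Moreover the double cover is unramified over $x=\infty$: the defining polynomial has even degree and square (namely $1$) leading coefficient, so there are two points of $H$ above $x=\infty$. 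Therefore the branch locus of $H\to\Pp^1_w$ is $\{0,\infty,s_1,s_2\}$. Over $w=0$ the fibre is $\{(0,1),(0,-1)\}$, two points of index $3$, so the local monodromy has order $3$ and $d=6/3=2$; over $w=\infty$, symmetrically, two points of index $3$ and $d=2$; over $w=s_i$ the three cube roots of $s_i$ are each the image of a single Weierstrass point of index $2$, giving three points of index $2$, monodromy of order $2$, $d=6/2=3$. Thus the covering is of type $(2,2,3,3)$. A Riemann--Hurwitz check, $2g(H)-2=6(-2)+2\cdot2+2\cdot2+3\cdot1+3\cdot1=2$, recovers $g(H)=2$.

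The only points that need real care are the two coming from the hypothesis $\tau\notin\{2,-2\}$ and the behaviour at infinity. The condition $\tau^2\neq4$ is exactly what makes $s^2+\tau s+1$ separable, so that the two order-$2$ branch points are genuinely distinct, while $s_1s_2=1\neq0$ keeps them away from $0$ and $\infty$; without it the branching degenerates. At infinity one must note that an even-degree hyperelliptic model with square leading coefficient has two rational points at infinity, each acquiring index $3$ under $x\mapsto x^3$, which is what produces the second $d=2$ branch point rather than a single point of index $6$. Everything else is the routine Galois-theoretic and ramification bookkeeping above.
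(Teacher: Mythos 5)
Your proof is correct and follows essentially the same route as the paper: identifying the branch locus $\{0,\infty,\omega,\omega'\}$ and reading off the type from the fibres (two points of index $3$ over $0$ and $\infty$, three Weierstrass points of index $2$ over each root of $z^2+\tau z+1$). You additionally spell out the Galois-theoretic verification that $\langle\rho\rangle\cong\Zz6$ and that $k(H)^{\langle\rho\rangle}=k(x^3)$, together with a Riemann--Hurwitz check, which the paper leaves implicit.
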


\begin{proof}
Soient $\omega$ et $\omega'$ les racines non nulles et distinctes de $z^2+\tau z+1$. Le revêtement $(x,y)\mapsto x^3$ est ramifié en $0, \infty, \omega$ et $\omega'$. La fibre au-dessus de $0$ est constituée des deux points $(0,\pm1)$ tout comme celle au-dessus de $\infty$ constituée des deux points à l'infini.
Les fibres au-dessus de $\omega$ et $\omega'$ sont quant à elles constituées de 3 points de \WS chacune.
\end{proof}

De façon similaire à la construction associée au type \hyperref[type:1122]{$(1,1,2,2)$} on peut utiliser le fait que $\Jac(H)$ est le produit de deux courbes elliptiques $Y^2=(X\pm2)(X^3-3X+\tau)$ pour construire le revêtement de degré $l$, non ramifié.

\begin{prop}\label{mult:reelle:6}
Soit $P$ un point de $E$ d'ordre $l$ et soit $a\in(\Zz l) ^*$ d'ordre $3$. On considère une fonction $\phi(X,Y)$ de diviseur $l((P)-(O_E))$ et $\psi(x,y)$ la fonction correspondante sur $H$. On définit alors la courbe 
\[
(Y)\ \Bigg\{\begin{array}{l@{\,=\,}l}
y^2&(x^6+\tau x^3+1)\\
t^l&\psi(x,y)\psi(jx,y)^a\psi(j^2x,y)^{a^2}.
\end{array}\]
Alors, le revêtement $Y\rightarrow\Pp^1$, $(x,y,t)\mapsto x^3$ est de type $(2,2,3,3)$, de groupe de Galois $G_{l,6}$, engendré par les automorphismes
\begin{align*}
\alpha:(x,y,t)&\mapsto\big(jx,-y,t^{-a^2}\psi(jx,y)^k\psi(j^2x,y)^{ak}\Phi(jx)\Phi(j^2x)^a\Phi(x)^{a^2}\big)\\
\sigma:(x,y,t)&\mapsto(x,y,\zeta_l t)
\end{align*}
où $k$ est tel que $a^3=1+kl$, et $\Phi(x)$ est une fonction que l'on précisera.
\end{prop}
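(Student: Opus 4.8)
Le plan est de calquer la démonstration de la Proposition~\ref{mult:reelle:4}, l'automorphisme d'ordre $4$ de $H$ étant remplacé par celui d'ordre $6$, $(x,y)\mapsto(jx,-y)$, et $a$ étant d'ordre $3$. D'abord, $P$ étant d'ordre $l$ sur $E$, on a $lP=O_E=lO_E$ et $l((P)-(O_E))$ est principal, ce qui fournit $\phi$ sur $E$ puis $\psi$ sur $H$ par composition avec le morphisme $H\to E$ analogue à celui de la Proposition~\ref{decomp:H1:E2} (explicitement $(x,y)\mapsto\bigl(x+\tfrac1x,\tfrac{(x+1)y}{x^2}\bigr)$, de degré $2$, vers $E:Y^2=(X+2)(X^3-3X+\tau)$). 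La fonction $\psi(x,y)\psi(x,-y)$ est stable par l'involution hyperelliptique, donc une fonction de $x$ seul dont le diviseur, vu sur $\Pp^1$, n'a que des points d'ordre multiple de $l$ ; c'est donc, à une constante multiplicative près que l'on absorbe en remplaçant $\psi$ par $c^{(l-1)/2}\psi$, la puissance $l$\ieme d'une fonction $\Phi(x)$ (en fait $\Phi(x)=x+\tfrac1x-x_P$, où $x_P$ est l'abscisse de $P$).

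Ensuite, je vérifierais que $Y\to\Pp^1$ est de type $(2,2,3,3)$. Posant $F(x,y)=\psi(x,y)\psi(jx,y)^a\psi(j^2x,y)^{a^2}$, ce qui a un sens puisque $(x,y)\mapsto(jx,y)$ est un automorphisme de $H$, le diviseur de $F$ a tous ses coefficients multiples de $l$, donc l'extension de Kummer $t^l=F$ définit un revêtement $Y\to H$ connexe, non ramifié, de degré $l$ et de groupe $\Zz l$ ; composé au revêtement $H\to\Pp^1$, $(x,y)\mapsto x^3$, de type $(2,2,3,3)$ établi plus haut, il fournit $Y\to\Pp^1$ de degré $6l$ ; l'absence de ramification dans $Y\to H$ garantit que la monodromie de $Y\to\Pp^1$ en chacun des quatre points de branchement garde le même ordre ($3,3,2,2$) qu'en bas, d'où le type $(2,2,3,3)$.

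Le point central de la preuve est l'identification du groupe de Galois. Pour relever $(x,y)\mapsto(jx,-y)$ en $\alpha$, j'écrirais $F(jx,-y)=\psi(jx,-y)\psi(j^2x,-y)^a\psi(x,-y)^{a^2}$, je substituerais $\psi(\,\cdot\,,-y)=\Phi(\,\cdot\,)^l\psi(\,\cdot\,,y)^{-1}$, d'où $F(jx,-y)=\Phi(jx)^l\Phi(j^2x)^{al}\Phi(x)^{a^2l}\,\psi(jx,y)^{-1}\psi(j^2x,y)^{-a}\psi(x,y)^{-a^2}$ ; puis, avec $k$ tel que $a^3=1+kl$ et en utilisant $a^4=a+akl$, le produit $\psi(jx,y)^{-1}\psi(j^2x,y)^{-a}\psi(x,y)^{-a^2}$ se réécrit $F^{-a^2}\psi(jx,y)^{kl}\psi(j^2x,y)^{akl}=\bigl(t^{-a^2}\psi(jx,y)^{k}\psi(j^2x,y)^{ak}\bigr)^l$, ce qui donne finalement $F(jx,-y)=\bigl(t^{-a^2}\psi(jx,y)^{k}\psi(j^2x,y)^{ak}\Phi(jx)\Phi(j^2x)^a\Phi(x)^{a^2}\bigr)^l$ : on obtient ainsi l'automorphisme $\alpha$ annoncé. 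L'automorphisme $\sigma:(x,y,t)\mapsto(x,y,\zeta_l t)$ est immédiat. Un calcul direct, calqué sur la Proposition~\ref{mult:reelle:4}, donne $\alpha\sigma\alpha^{-1}=\sigma^{-a}$ ; comme $a$ est d'ordre $3$, l'élément $-a$ est d'ordre $6$ dans $(\Zz l)^*$, si bien que $\langle\sigma\rangle$ est normal dans $\langle\alpha,\sigma\rangle$ de quotient $\Zz6$, et ce groupe, d'ordre $6l$, est précisément $G_{l,6}$ (en particulier $\alpha$ est d'ordre $6$). Enfin, le corps des invariants de $\langle\sigma\rangle$ est $k(H)$, celui de $(x,y)\mapsto(jx,-y)$ agissant sur $k(H)$ est $k(x^3)=k(\Pp^1)$, donc $k(Y)^{\langle\alpha,\sigma\rangle}=k(\Pp^1)$ ; joint à $\ca\langle\alpha,\sigma\rangle=6l=[k(Y):k(\Pp^1)]$, cela prouve que $G_{l,6}$ est le groupe de Galois de $Y\to\Pp^1$.

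La principale difficulté sera le suivi des exposants modulo $l$ dans le relèvement de $\alpha$ (cohérence des puissances $l$\ieme, rôle des trois fonctions auxiliaires $\Phi(jx),\Phi(j^2x),\Phi(x)$ et de l'entier $k$), le point conceptuel à retenir étant que la conjugaison s'effectue par l'élément $-a$, d'ordre $6$, bien que $a$ ait été choisi d'ordre $3$ : c'est exactement ce qui donne $G_{l,6}$ et non $G_{l,3}\times\Zz2$. Il faudra aussi, comme à l'accoutumée, s'assurer que $F$ n'est pas une puissance $l$\ieme dans $k(H)$, ce qui découle de ce que $P$ est d'ordre exactement $l$.
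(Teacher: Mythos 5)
Votre démonstration est correcte et suit exactement la démarche que l'article sous-entend (celui-ci ne rédige pas la preuve de cette proposition et renvoie implicitement à celle, analogue, de la proposition~\ref{mult:reelle:4}) : construction de $\psi$ via le morphisme $H\to E$, identité $\psi(x,-y)=\Phi(x)^l\psi(x,y)^{-1}$ avec $\Phi(x)=x+\tfrac1x-x_P$, relèvement de l'automorphisme d'ordre $6$ grâce à la relation $a^3=1+kl$, puis identification du groupe par la conjugaison d'ordre $6$. Seule remarque mineure : selon la convention de composition adoptée, le calcul direct donne $\alpha\sigma\alpha^{-1}=\sigma^{-a^2}$ plutôt que $\sigma^{-a}$, mais $-a$ et $-a^2$ étant inverses l'un de l'autre modulo $l$ et tous deux d'ordre $6$, cela ne change rien à la conclusion $G_{l,6}$.
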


L'équation de $Y=X/\langle\alpha\rangle$, se calcule, de façon usuelle un polynôme invariant par $(x,y)\!\mapsto\!(jx,\!-y)$ :\vspace*{-8pt}
\[\Pi(x,y,z):=\Res_t\big(z-\sum\limits_{r=0}^5\alpha^r(t),z^l-\prod\limits_{r=0}^2\psi(j^rx,y)^{a^r}\big)\vspace*{-8pt}\], qui est donc un polynôme en $y^2$ et $x^3$ et l'équation de $X$ est donnée par $X(w,z)=\Pi(x,y,z)\mod\!_{x,y}\ [y^2-(x^6+\tau x^3+1),x^3-w]$.
Ceci montre, comme dans le cas~\ref{type:1122}, la proposition suivante. 

\begin{prop}
Les courbes construites ci-dessus sont une famille à un paramètre, $\tau$, dont la jacobienne est à multiplication réelle par $\Q(\zeta_{l}^{(6)})$. 
\end{prop}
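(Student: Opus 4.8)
The plan is to reduce the statement to Proposition~\ref{mult:reelle:6} together with the theorem of Ellenberg stated at the end of Section~\ref{sec:ellen1}, in exactly the way the $(1,1,2,2)$ proposition was reduced to Proposition~\ref{mult:reelle:4}. First I would note that $a\in(\Zz l)^*$ of order $3$ exists because $l\equiv1\bmod 6$ forces $3\mid l-1$; fix such an $a$ together with $k$ satisfying $a^3=1+kl$, and work over $\Q(j)$ with $j^3=1$. Given a point $P$ of order $l$ on the elliptic curve $E:Y^2=(X+2)(X^3-3X+\tau)$ and a function $\phi$ with $\div(\phi)=l((P)-(O_E))$, Proposition~\ref{mult:reelle:6} already supplies the curve $Y$, the automorphisms $\alpha,\sigma$ generating $G_{l,6}$, and the fact that $(x,y,t)\mapsto x^3$ realizes $Y\to\Pp^1$ as a $G_{l,6}$-Galois covering of type $(2,2,3,3)$. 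The smooth quotient curve $X=Y/\langle\alpha\rangle$ is then produced by the resultant elimination displayed just before the proposition, as a plane curve $X(w,z)$.

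Next I would apply Ellenberg's theorem verbatim: $n=6$ is even, the subgroup $\langle\alpha\rangle$ is precisely the order-$n$ subgroup occurring in that theorem, $X=Y/\langle\alpha\rangle$, and the type $(2,2,3,3)$ appears in the row $n=6$ of Table~\ref{table:types}; hence $\Jac(X)$ has real multiplication by $\Q(\zeta_l^{(6)})$ (which is indeed totally real, since the unique order-$2$ element of the cyclic group $\langle k\rangle\subset(\Zz l)^*$ is $-1$). If one prefers a direct verification on the plane model, it suffices to argue as in the $(0,0,1,1)$ and $(1,1,1,1,1,1)$ cases: one checks that the element of the Hecke algebra $\Q[\langle\alpha\rangle\backslash G_{l,6}/\langle\alpha\rangle]$ corresponding to $\zeta_l^{(6)}$ — built from $\sigma$ and its conjugates — descends to an endomorphism of $\Jac(X)$, which embeds the totally real field $\Q(\zeta_l^{(6)})$ of degree $\tfrac{l-1}{6}$ into $\End_\Q(\Jac(X))$, the genus of $X$ read off from $X(w,z)$ ruling out any larger field.

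It remains to see that $\tau$ is a genuine free parameter, and this is the only step needing real care. I would argue exactly as in the proof of the $(1,1,2,2)$ analogue: the $l$-division polynomial of $E_\tau$ is a polynomial in $X$ whose coefficients lie in $\Q(j)[\tau]$, and any of its roots gives the abscissa of a point of order $l$ on $E_\tau$ over a finite extension of $\Q(j)(\tau)$, over which the whole construction then goes through; equivalently one pulls the family back along the modular curve $X_1(l)$, which parametrizes pairs $(E,P)$ with $P$ of exact order $l$, via the map sending such a pair to the corresponding value of $\tau$. Either way the auxiliary data — the function $\phi$, the function $\Phi$, and hence the coefficients of $X(w,z)$ — depend algebraically on $\tau$, so one really obtains a one-parameter family of curves rather than one curve for each admissible $\tau$; everything beyond this point is a direct invocation of Proposition~\ref{mult:reelle:6} and of Ellenberg's theorem, with no new idea beyond the $l$-division polynomial of a Weierstrass cubic.
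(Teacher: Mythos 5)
Your proposal is correct and follows essentially the same route as the paper: the covering and its $G_{l,6}$-action are supplied by Proposition~\ref{mult:reelle:6}, the real multiplication by $\Q(\zeta_l^{(6)})$ comes from Ellenberg's theorem applied to the type $(2,2,3,3)$, and the one-parameter nature of the family is secured exactly as in the $(1,1,2,2)$ case, via a root of the $l$-division polynomial of $E_\tau$ or equivalently via $X_1(l)$. Your write-up merely makes explicit (the existence of $a$ of order $3$, the total realness of $\Q(\zeta_l^{(6)})$, the algebraic dependence of $\phi$, $\Phi$ and $X(w,z)$ on $\tau$) what the paper leaves implicit.
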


\paragraph{Utilisation des sous-groupes stables} Toutefois, comme on l'a évoqué dans l'introduction de cette section, on peut \og simplifier \fg la courbe $Y$ en cherchant un point sur la jacobienne de $y^2=x^6+\tau x^3+1$ dont le groupe engendré est stable par $\alpha:(x,y)\mapsto(jx,-y)$. Sur les corps finis, on l'utilise dans l'algorithme suivant.

\begin{algo}[H]\caption{Courbes à multiplication réelle par $\Q(\zeta_l^{(6)})$}\label{algo:mult:reelle6}
\noindent\textbf{Entrée} : $l,p$
\begin{algorithmic}[1]
\Repeat
\State{\textbf{Choisir $\tau\in\F_p$}}
\State{$J\leftarrow\Jac(x^6+\tau x^3+1)$}
\Until{$\exists P\in J$ tel que $\langle P\rangle$ soit stable par $\alpha$}\label{alg2:5:l:4}
\State{\textbf{Calculer}  $a$ tel que $\alpha(P)=aP$} 
\State{\textbf{Calculer} $\phi(x,y)$ vérifiant $\div(\phi)=l\Div(P)$}
\State{\textbf{Calculer} $\Phi(x,y)$ tel que $\div(\Phi)=\alpha(P)-aP$}
\State{\textbf{Définir} $\alpha':(x,y,t)\mapsto(jx,-y,\Phi(x,y)t^a)$}
\State{$\Pi\leftarrow\Res_t\big(z-\sum\limits_{r=0}^5\alpha'^r(t),z^l-\phi(x,y)\big)$}
\State{$X(w,z)\leftarrow \Pi \mod\!_{x,y}\ [x^2-w,y^2-(x^6+\tau x^3+1)]$}
\end{algorithmic}
\noindent\textbf{Sortie} $X$.
\end{algo}

\begin{expl} On finit cette section en regardant deux exemples. Dans le premier, on considère le cas $l=13$. On applique l'algorithme précédent pour différentes caractéristiques $p$ et on regroupe les résultats dans la table~\ref{table:expl3} donnant les équations courbes hyperelliptiques et les polynômes caractéristiques. 

\begin{table}[ht]
\centering
\begin{tabular}{|c|c|c||c|}
\hline\multirow{2}{*}{$\F_p$}&Ordre de&\multirow{2}{*}{$\tau$}&Équation hyperelliptique de $X$\\[-2pt]
&$l\in(\Zz{13})^*$&&Polynôme caractéristique de la trace du Frobenius\\
\hline
\multirow{2}{*}{$79$}&\multirow{2}{*}{$1$}&\multirow{2}{*}{$15$}&{$y^2=x^6 + 34x^5 + 30x^4 + 65x^3 + 46x^2 + 30x + 9$ }\\[-3pt]
&&&$r^2-4r-9$\\
\multirow{2}{*}{$103$}&\multirow{2}{*}{$2$}&\multirow{2}{*}{$17$}&{$y^2=3x^6 + 94x^5 + 67x^4 + 83x^3 + 63x^2 + 93x + 23$}\\[-3pt]
&&&$r^2-13r+39
$\\
\multirow{2}{*}{$139$}&\multirow{2}{*}{$3$}&\multirow{2}{*}{$17$}&$y^2=2x^6 + 77x^5 + 61x^4 + 104x^3 + 118x^2 + 91x + 137$\\[-3pt]
&&&$r^2+27r+101$\\
\multirow{2}{*}{$127$}&\multirow{2}{*}{$6$}&\multirow{2}{*}{$12$}&$y^2=3x^6 + 20x^5 + 106x^4 + 125x^3 + 34x^2 + 73x + 63$\\[-3pt]
&&&$r^2+26r+156$\\
\multirow{2}{*}{$31$}&\multirow{2}{*}{$4$}&\multirow{2}{*}{$7$}&$y^2=3x^6 + 24x^5 + 7x^4 + 17x^3 + 14x^2 + 21x + 15$\\[-3pt]
&&&$r^2-75$, ($t^4 - 13t^2+31^2$)\\
\multirow{2}{*}{$37$}&\multirow{2}{*}{$12$}&\multirow{2}{*}{$14$}&$y^2=x^6 + 26x^5 + 31x^4 + 2x^3 + 9x^2 + 34x + 9$\\[-3pt]
&&&$r^2-100$, ($t^4 - 26t^2 + 37^2$)\\
\hline
\end{tabular}
\caption{Polynômes caractéristiques pour des réductions de courbes $X$ à multiplication réelle par $\Q(\zeta_{13}^{(6)})$ sur des corps finis.}
\label{table:expl3}
\end{table}
\noindent
Pour les ordres de $p\in(\Zz{13})^*$ divisant $6$, le polynôme caractéristique du Frobenius  définit un corps de nombres qui est une extension quadratique de $\Q(\zeta_{13}^{(6)})$. Dans les deux autres cas, ce sont des jacobiennes de courbes supersingulières, qui se scindent en un produit de deux courbes elliptiques sur une extension de degré 2.

Dans un second temps, on spécifie $\tau=0$, mais pour $l\equiv1\mod3$ général. Dans ce cas\footnote{On ne pouvait pas utiliser cela dans la section précédente du fait que l'on obtient nécessairement une fonction $\phi(x,y)$ en $x^2$ faisant tomber l'ordre de $(x,y)\mapsto(-jx,y)$ à 3.}, $H$, d'équation $y^2=x^6+1$,  possède une jacobienne non simple dont on peut prendre comme facteur elliptique $Y^2=X^3+1$, avec $Y=y$ et $X=x^2$. 

Cette courbe elliptique est à multiplication complexe par $\Q(j)$. Comme on a $l\equiv1\mod3$, l'idéal $(l)$ est décomposé dans $\Q(j)$. On écrit $a+bj$ un de ces facteurs et on voit $E$ comme $\C/(\Z+j\Z)$. Parmi les points de $l$\ti{}torsion de $E$, le sous-groupe engendré par $\tfrac{a+jb}{l}$ est stable par multiplication par $j$. Cela veut dire que le polynôme de $l$\ti{}division de $E$ se scinde sur $\Q(j)$, possédant deux facteurs de degré $\tfrac{l-1}2$, dont le produit est un polynôme à coefficients dans $\Q$ de degré $l-1$. On note $P_l=(x_0,y_0)$ un point de $l$\ti{}torsion, dont $x_0$ est racine de ce polynôme. Les coordonnées de $P_l$ sont donc dans un corps de nombres degré $2l-2$. On considère ensuite, comme précédemment une fonction $\phi(X,Y)$ de diviseur $l((P_l)-(O_E))$, puis la fonction $\psi(x,y)=\phi(x^2,y)$ sur $H$. Cette fonction nous permet, comme expliqué précédemment de considérer la courbe $Y$ d'équation
\[
(Y)\ \Bigg\{\begin{array}{l@{\,=\,}l}
y^2&(x^6+1)\\
t^l&\psi(x,y).
\end{array}\]
Pour $l=13$, les calculs aboutissent et donnent les deux propositions suivantes. Notons que la courbe hyperelliptique de genre 2 que l'on obtient possède des invariants absolus définis sur $\Q(j)$.
\begin{prop}
Soit $x_0$ une racine du polynôme $x^6 + (12j + 8)x^3 + \tfrac1{13}(48j + 64)$ et $y_0$ tel que $y_0^2=x_0^3+1$. Alors, le point $(x_0,y_0)$ de la courbe elliptique $y^2=x^3+1$ est d'ordre 13, et le sous-groupe qu'il engendre contient $(jx_0,y_0)$.
\end{prop}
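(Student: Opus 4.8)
Let me restate it carefully. We have $l \equiv 1 \pmod 6$ prime, and the elliptic curve $E : y^2 = x^3+1$ over $\mathbb{Q}(j)$ (with $j$ a primitive cube root of unity). The claim: if $x_0$ is a root of the degree-6 polynomial $x^6 + (12j+8)x^3 + \tfrac{1}{13}(48j+64)$ and $y_0^2 = x_0^3+1$, then $P_l := (x_0,y_0)$ has order $13$ on $E$, and the cyclic subgroup $\langle P_l \rangle$ contains the point $(jx_0, y_0)$, i.e. it is stable under the automorphism $[j]:(x,y)\mapsto(jx,y)$ of $E$.

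\textbf{Plan of proof.} The idea is to realise $\langle P_l\rangle$ as the kernel of the endomorphism attached to a prime of $\mathbb Z[j]$ lying above $l=13$; stability under $(x,y)\mapsto(jx,y)$ is then automatic, and only the explicit shape of the degree-$6$ polynomial requires a finite computation.

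First I would record the complex multiplication. The curve $E:y^2=x^3+1$ has $\End(E)=\mathbb Z[j]$, with the automorphism $\gamma:(x,y)\mapsto(jx,y)$ (well defined since $j^3=1$) realising multiplication by a primitive cube root of unity; it has order $3$, so $\gamma\in\{[j],[j^2]\}$, and in either case $\End(E)$ is commutative. Since $l=13\equiv 1\bmod 3$, the ideal $(13)$ splits in $\mathbb Z[j]$ as $\mathfrak p\bar{\mathfrak p}$; fix a generator $\pi$ of $\mathfrak p$ with $N(\pi)=13$, for instance $\pi=4+j$ (indeed $N(4+j)=16-4+1=13$). Then $E[\mathfrak p]:=\ker[\pi]$ is a subgroup of $E[13]$ of order $N(\pi)=13$, hence cyclic; being the kernel of an element of the commutative ring $\End(E)$, it is a $\mathbb Z[j]$-submodule of $E[13]$, so $\gamma\bigl(E[\mathfrak p]\bigr)=E[\mathfrak p]$. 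Note that this last point does not depend on whether $\gamma$ is $[j]$ or $[j^2]$, since both generate $\mathbb Z[j]$.

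Next I would pin down the polynomial. The $12$ nonzero points of $E[\mathfrak p]$ have no $2$-torsion, so they fall into $6$ pairs $\{Q,-Q\}$ with $6$ distinct $x$-coordinates; let $f_\pi\in\mathbb Q(j)[x]$ be the monic degree-$6$ polynomial with these roots, i.e. the kernel polynomial of the isogeny $[\pi]$. I claim $f_\pi(x)=x^6+(12j+8)x^3+\tfrac1{13}(48j+64)$. This is the one step that needs a calculation: one computes $[\pi]=[4]+\gamma$ as a rational map of degree $13$ and checks that the stated polynomial equals the denominator kernel polynomial $f_\pi$ of $x\circ[\pi]=A(x)/f_\pi(x)^2$ (equivalently, that it divides this denominator, both being monic of degree $6$); alternatively one computes the $13$-division polynomial $\psi_{13}$ of $E$, factors it over $\mathbb Q(j)$, and identifies the factor cut out by $\mathfrak p$. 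It is reassuring that the stated polynomial is visibly invariant under $x\mapsto jx$ (it is a polynomial in $x^3$), consistent with $\gamma$-stability, and that its coefficients do not lie in $\mathbb Q$, which is what distinguishes $\mathfrak p$ from $\bar{\mathfrak p}$ (the conjugate choice $\pi=4+j^2$, or replacing $\gamma$ by $\gamma^2$, produces the Galois-conjugate polynomial $f_{\bar\pi}$).

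Granting the identification, the conclusion is immediate: if $x_0$ is a root of $f_\pi$ and $y_0$ is a square root of $x_0^3+1$, then $P_l=(x_0,y_0)$ is a nonzero point of $E[\mathfrak p]$, so $\ord(P_l)=13$ (a prime) and $\langle P_l\rangle=E[\mathfrak p]$; since $E[\mathfrak p]$ is $\gamma$-stable, $\gamma(P_l)=(jx_0,y_0)$ lies in $\langle P_l\rangle$, which is exactly the assertion. The only real obstacle is the explicit computation of $f_\pi$, in particular getting the normalisation of the constant term right and fixing the choice of prime above $13$; the structural input — that a subgroup of order $13$ stable under $[j]$ exists, namely $E[\mathfrak p]$ — is a formal consequence of the CM by $\mathbb Z[j]$.
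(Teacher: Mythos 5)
Your argument is correct and follows essentially the same route as the paper: the curve $y^2=x^3+1$ has \textsc{cm} by $\Z[j]$, the prime $13\equiv1\bmod 3$ splits, and the sought subgroup is the kernel of (equivalently, the lattice point $\tfrac{a+jb}{13}$ in $\C/(\Z+j\Z)$ attached to) a prime $\pi$ of norm $13$, whose $\Z[j]$-stability gives the invariance under $(x,y)\mapsto(jx,y)$ for free, the explicit degree-$6$ kernel polynomial being left to a finite machine computation exactly as in the paper.
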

\begin{prop}
La courbe hyperelliptique d'équation

{\relpenalty=0000
\binoppenalty=0000
\footnotesize
\noindent
$Y^2=\big(X+ ( {\frac {1}{468}} ( -110j-115 ) x_0^{5
}+{\frac {1}{234}} ( -451j+289 ) x_0^{2}
 ) {y_0}\alpha+ ( {\frac {1}{156}} ( 67j+47
 ) x_0^{5}+\frac1{39} ( 73j-85 ) x_0^{2}
 ) {y_0}\big)
 \big(X+ ( {\frac {1}{936}} ( 653j+337
 ) x_0^{5}+{\frac {1}{234}} ( 314j-1331 ) 
x_0^{2} ) {y_0}\alpha+ ( \frac1{24} ( 47j+19
 ) x_0^{5}+\frac16 ( 5j-110 ) x_0^{2}
 ) {y_0}\big)\big({X}^{2}+ (  ( {\frac {1}{312}} ( -
128j-57 ) x_0^{5}+{\frac {1}{78}} ( -29j+300
 ) x_0^{2} ) {y_0}\alpha+ ( {\frac {1}{72
}} ( -58j-11 ) x_0^{5}+\frac1{18} ( 35j+166
 ) x_0^{2} ) {y_0} ) X+ ( \frac1{13}
 ( 22j+23 ) x_0^{4}+ ( 10j-1 ) {x_0} ) \alpha+{\frac {1}{52}} ( 283j+196 ) x_0^{4}+\frac1{13} ( 124j-245 ) {x_0}\big)\big({X}^{2}+ ( 
 ( {\frac {1}{936}} ( 227j+76 ) x_0^{5}+{
\frac {1}{117}} ( -11j-265 ) x_0^{2} ) {
y_0}\alpha+ ( {\frac {1}{72}} ( 11j-8 ) {{
x_0}}^{5}+\frac19 ( -20j-28 ) x_0^{2} ) {
y_0} ) X+ ( {\frac {1}{52}} ( j-22 ) {{\it 
x0}}^{4}+1/13 ( -53j-4 ) {x_0} ) \alpha+\frac1{26}
 ( -21j-45 ) x_0^{4}+\frac1{13} ( -201j-24
 ) {x_0}\big)
$}

\noindent
est à multiplication réelle par $\Q(\sqrt{13})$.
\end{prop}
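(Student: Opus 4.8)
Le plan est de reprendre, pour $l=13$ et $\tau=0$, la stratégie déjà employée pour le type~\hyperref[type:1122]{$(1,1,2,2)$} et la proposition~\ref{mult:reelle:6} : exhiber un revêtement $Y\rightarrow\Pp^1$ de type $(2,2,3,3)$ et de groupe de Galois $G_{13,6}$, puis conclure par le théorème d'Ellenberg rappelé à la section~\ref{sec:ellen1}.

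Je partirais de la proposition précédente : le point $P_{13}=(x_0,y_0)$ est d'ordre $13$ sur la courbe elliptique $E:Y^2=X^3+1$ — facteur de $\Jac(H)$ pour $H:y^2=x^6+1$ via $X=x^2$, $Y=y$ — et le sous-groupe $\langle P_{13}\rangle$ de $E[13]$ est stable par multiplication par $j$ ; combiné à la stabilité automatique par $[-1]$, il est donc stable par l'automorphisme d'ordre $6$ de $E$ induit par $\alpha:(x,y)\mapsto(jx,-y)$, \cad par $(X,Y)\mapsto(j^2X,-Y)$. Je noterais $a\in(\Zz{13})^*$, d'ordre $6$, l'entier tel que cet automorphisme envoie $P_{13}$ sur $aP_{13}$. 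Je prendrais ensuite une fonction $\phi(X,Y)$ de diviseur $13\bigl((P_{13})-(O_E)\bigr)$, la fonction correspondante $\psi(x,y)=\phi(x^2,y)$ sur $H$, et la courbe $Y$ d'équations $\{y^2=x^6+1,\ t^{13}=\psi(x,y)\}$. Comme le support du diviseur de $\psi$ sur $H$ est \og multiple \fg{} de $13$, le revêtement $Y\rightarrow H$, $(x,y,t)\mapsto(x,y)$, est non ramifié, cyclique d'ordre $13$ ; composé avec le revêtement $(x,y)\mapsto x^3$ de $H$, qui est de type $(2,2,3,3)$, il fournit un revêtement $Y\rightarrow\Pp^1$ du même type. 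Enfin, la stabilité de $\langle P_{13}\rangle$ permet, comme dans la preuve de la proposition~\ref{mult:reelle:6} ou via l'algorithme~\ref{algo:mult:reelle6}, de relever $\alpha$ en un automorphisme $\alpha'$ d'ordre $6$ de $Y$, de la forme $(x,y,t)\mapsto(jx,-y,\Phi(x,y)\,t^{a})$ où $\Phi$ est une fonction convenable (de diviseur $\alpha(P_{13})-aP_{13}$) ; je vérifierais alors que $\alpha'$ et $\sigma:(x,y,t)\mapsto(x,y,\zeta_{13}t)$ satisfont $\alpha'\sigma\alpha'^{-1}=\sigma^{a}$, \cad que $Y\rightarrow\Pp^1$ a bien pour groupe de Galois $G_{13,6}$.

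Le cœur du travail serait ensuite le calcul effectif d'une équation de $X=Y/\langle\alpha'\rangle$ : je calculerais le polynôme minimal en $z$ de $z_t:=\sum_{r=0}^5\alpha'^r(t)$, \cad $\Res_t\bigl(z-z_t,\ t^{13}-\psi(x,y)\bigr)$, invariant par $\alpha$ donc polynomial en $x^3$ et $y^2$, puis je le réduirais modulo $y^2-(x^6+1)$ et $w-x^3$ pour obtenir une équation plane de $X$. Comme $X$ est de genre $\tfrac{l-1}6=2$, je la ramènerais à un modèle hyperelliptique à l'aide d'une base de différentielles holomorphes et du changement de variables adapté, en travaillant sur $\Q(j)$ avec $x_0$ racine de $x^6+(12j+8)x^3+\tfrac1{13}(48j+64)$ et $y_0^2=x_0^3+1$ ; c'est ce calcul qui doit fournir l'équation annoncée. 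Le revêtement $Y\rightarrow\Pp^1$ étant de type $(2,2,3,3)$, qui figure dans la table~\ref{table:types} pour $n=6$, le théorème d'Ellenberg donne que $\Jac(X)$ est à multiplication réelle par $\Q(\zeta_{13}^{(6)})$ ; or $\Q(\zeta_{13}^{(6)})$ est l'unique sous-corps quadratique de $\Q(\zeta_{13})$, et vaut $\Q(\sqrt{13})$ puisque $13\equiv1\bmod4$, d'où la conclusion.

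L'obstacle principal me paraît être la taille de ce dernier calcul : le corps de base effectif est $\Q(j)(x_0)$, extension de degré $6$ de $\Q(j)$, et l'élimination par résultant porte sur des polynômes de degré $13$ ; il faut de surcroît contrôler soigneusement la fonction auxiliaire $\Phi$ pour garantir que $\alpha'$ est bien d'ordre $6$, puis mener la réduction au modèle hyperelliptique de genre $2$ sans perdre la forme des coefficients.
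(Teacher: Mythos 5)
Votre démarche est exactement celle du texte : le paragraphe qui précède la proposition décrit précisément la construction de $Y$ par l'extension de Kummer $t^{13}=\psi(x,y)$ avec $\psi(x,y)=\phi(x^2,y)$ issue du point d'ordre $13$ donné par la proposition précédente, le relèvement de l'automorphisme d'ordre $6$ permis par la stabilité de $\langle P_{13}\rangle$ sous la multiplication complexe et $[-1]$, puis le calcul du quotient par résultant, la multiplication réelle par $\Q(\zeta_{13}^{(6)})=\Q(\sqrt{13})$ découlant du théorème d'Ellenberg pour le type $(2,2,3,3)$. Votre proposition est donc correcte et suit essentiellement la même voie que le papier, le seul contenu restant étant le calcul explicite, que l'article lui-même ne détaille pas davantage.
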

\end{expl}

Dans les trois dernières sections qui suivent, on a simplement l'existence d'une courbe et non plus une famille de courbes. Une fois les revêtements $H\rightarrow\Pp^1$ déterminés, on peut obtenir sans peine la courbe $Y$ puis le quotient $X$ de manière analogue aux types \hyperref[type:1122]{$(1,1,2,2)$}
 et \hyperref[type:1122]{$(2,2,3,3)$}. Les fonctions dont on prend les racines $l$-ième dans l'extension de Kummer sont de degré trop important, pour pouvoir être utilisées, même sur les corps finis. On propose ici d'utiliser, comme exposé ci-dessus, des points de $l$\ti{}torsion stables par certains automorphismes de la courbe $H$.

\subsection{Type (1,1,2)}\label{type:112}
On commence comme précédemment avec la formule d'Hurwitz : si $H\mapsto\Pp^1$ est un revêtement de degré 6 de type $(1,1,2)$, alors on a aussi deux possibilités ;
\begin{align*}
g(H)&=\tfrac12(2+4(2g(\Pp^1)-2)+2(6-1)+2(3-1))=2\\[-5pt]
\intertext{ou}\\[-30pt]
g(H)&=\tfrac12(2+4(2g(\Pp^1)-2)+2(6-1)+(3-1))=1.
\end{align*}
On écarte la deuxième solution pour les mêmes raisons que dans le type \hyperref[type:1122]{$(1,1,2,2)$}. 
\begin{prop}
Soit $H$ la courbe hyperelliptique d'équation $y^2=x^6-1$, de genre~$2$. On définit un revêtement de $\Pp^1$ par  $(x,y)\mapsto y$. Il est de type $(1,1,2)$, de degré $6$, comme l'automorphisme $(x,y)\mapsto(-jx,y)$ qui engendre son groupe de Galois.
\end{prop}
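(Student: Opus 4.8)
On démontre successivement les trois points : que $\pi\colon(x,y)\mapsto y$ est un revêtement de degré $6$, que son type de branchement est $(1,1,2)$, et qu'il est galoisien de groupe cyclique engendré par $\beta\colon(x,y)\mapsto(-jx,y)$.

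Pour le degré, on remarque que $k(H)=k(x,y)$ avec $y^2=x^6-1$, de sorte que $x$ est racine sur $k(y)$ du polynôme $T^6-(y^2+1)$ ; comme $y^2+1$ est sans facteur carré et de degré $2$ dans $k(y)$, ce n'en est ni un carré ni un cube, donc ce polynôme est irréductible sur $k(y)$ et $[k(H):k(y)]=6$. Pour le groupe de Galois, $\beta$ est un automorphisme de $H$ (puisque $(-j)^6=1$) qui fixe $y$, et il est d'ordre $6$ car $j$ étant une racine primitive cubique de l'unité, $-j$ en est une racine primitive sixième. Les six automorphismes $\beta^0,\dots,\beta^5$ du revêtement $\pi$ sont distincts et $|\Aut(H/\Pp^1)|\le\deg\pi=6$ : le revêtement est donc galoisien, de groupe $\langle\beta\rangle\simeq\Z/6\Z$ — ce que l'on peut aussi voir directement en constatant que $\langle\beta\rangle$ permute transitivement les six racines de $x^6=y^2+1$ dans une fibre générique.

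Il reste à déterminer le branchement. Au-dessus d'une valeur finie $y=c$ avec $c^2\ne-1$, la fibre $\{(\zeta x_0,c):\zeta^6=1\}$, où $x_0^6=c^2+1\ne 0$, est formée de six points distincts, non ramifiés. Au-dessus de $y=\pm i$, l'équation $x^6=y^2+1=0$ impose $x=0$ : on obtient l'unique point lisse $(0,\pm i)$ (la dérivée partielle en $y$ y vaut $2y\ne0$), et l'écriture $y\mp i=\mp\tfrac i2\,x^6+\cdots$ montre que $\pi$ y est totalement ramifié, d'indice $6$. Au-dessus de $y=\infty$, on utilise les deux points à l'infini $\infty_\pm$ du modèle hyperelliptique de degré pair : dans la carte $x=1/u$, on a $y=u^{-3}\sqrt{1-u^6}$, donc $y$ admet un pôle d'ordre $3$ en chacun des $\infty_\pm$, soit un indice de ramification $3$. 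Les points de branchement sont ainsi exactement $y=i$, $y=-i$ (monodromie d'ordre $6$, d'où $d=6/6=1$) et $y=\infty$ (monodromie d'ordre $3$, d'où $d=6/3=2$), et $\pi$ est bien de type $(1,1,2)$ au sens de la notation~\ref{nota:types}. On contrôle la cohérence par Riemann--Hurwitz : $2g(H)-2=6(-2)+(5+5+2+2)=2$, soit $g(H)=2$.

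Tous ces calculs sont élémentaires ; les seuls points qui demandent un peu de soin sont de s'assurer qu'il n'y a pas d'autre point de branchement que $\{i,-i,\infty\}$ — ce qui repose sur le fait que $x\mapsto x^6$ n'est ramifié qu'en $0$, donc sur l'hypothèse $\carac k\nmid 6$ satisfaite ici — et le décompte exact des points et des indices de ramification à l'infini sur le modèle lisse de $H$. Le reste relève de vérifications de routine, et l'identification du groupe de Galois est immédiate dès que $\beta$ est reconnu d'ordre $6$ et fixant $y$.
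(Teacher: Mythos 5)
Your proof is correct and follows essentially the same route as the paper's: describe the fibres of $(x,y)\mapsto y$ and read off the ramification indices to get the type $(1,1,2)$, with your version merely adding the (routine but welcome) verifications of the degree, of the Galois group, and the Riemann--Hurwitz consistency check. Note only that the paper's proof places the two totally ramified points at $y=\pm1$ (which matches the equation $y^2=x^6+1$ used a few lines later in the same subsection), whereas for the equation $y^2=x^6-1$ of the statement the branch points are indeed $y=\pm i$ as you find --- a sign inconsistency internal to the paper that does not affect the type.
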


\begin{proof}
Le revêtement est ramifié en les $y$ tels que $y^2-1$ vaut 0 ou $\infty$, \cad pour $y=\pm1$, qui sont ramifiés d'ordre 6, et pour $y=\infty$ dont la fibre se compose des deux points à l'infini de $H$, d'ordre $3$. On a donc bien un revêtement de type $(1,1,2)$.
\end{proof}

La courbe hyperelliptique $H$ n'est pas simple et on peut adapter le résultat de la section précédente~\ref{type:1122} pour trouver un revêtement de degré $l$ non ramifié ainsi que le quotient explicite $X$.

On se propose à la place de chercher un point de la jacobienne de $H$ qui soit d'ordre $l$ tel que son sous groupe soit stable par l'action du groupe de Galois $\alpha:(x,y)\mapsto(-jx,y)$. En effet, si l'on dispose d'un tel point $P$ alors, on a une fonction $\phi(x,y)$ dont le support est $l$ fois celui du \og diviseur \fg de ce  point, au sens où on l'a défini par la formule~(\ref{div:jac}).
Le fait que le groupe engendré par ce point $P$ est stable par l'automorphisme $\alpha$ assure que l'on peut trouver $a$ d'ordre 6 dans $(\Zz l)^*$ tel que $\alpha(P)=aP$, ce qui montre l'existence d'une fonction $\Phi(x,y)$ telle que\vspace*{-5pt}
\[\phi(-jx,y)=\phi(x,y)^a\Phi(x,y)^l.\vspace*{-5pt}\]
Cela nous permet de considérer simplement la courbe \vspace*{-5pt}
\[
(Y)\ \Bigg\{\begin{array}{l@{\,=\,}l}
y^2&x^6+1\\
t^l&\phi(x,y)
.\end{array}\vspace*{-7pt}
\]
ainsi que le revêtement $Y\rightarrow\Pp^1$, $(x,y,t)\mapsto y$, qui est de type $(1,1,2)$, de groupe de Galois $G_{l,6}$ et qui engendré par les automorphismes\vspace*{-10pt}
\begin{align*}
\alpha:(x,y,t)&\mapsto\big(-jx,y,t^a\Phi(x,y)\big)\\
\sigma:(x,y,t)&\mapsto(x,y,\zeta_l t).\\[-25pt]
\end{align*}
Pour déterminer une équation plane du quotient $X$, on prend le résultant\vspace*{-8pt}
\[\Res_t\big(t^l-\phi(x,y),z-\sum_{i=0}^5\alpha^i(t)\big).\vspace*{-6pt}\]
C'est un polynôme en $y$, $z$ et $x^6$, dont il suffit de considérer le reste en $x$ par $x^6=y^2+1$, pour obtenir une équation plane de $X$ en les variables $y$ et $z$.
\vspace*{-4pt}
\begin{expl}Dans le cas $l=13$, on vérifie sans mal, à l'aide d'un logiciel tel Magma~\cite{magma}, la proposition suivante.
\end{expl}

\begin{prop}
Soit $x_0$ une racine du polynôme $x^{24}-4x^{18}+6x^{12}-\tfrac{13}{64}x^6+\tfrac{13}{256}$ et $y_0$ vérifiant $y_0^2=x_0^6-1$. Alors, le point de la jacobienne\vspace*{-8pt} 
\[P:=(x_0,y_0)+(-x_0,-y_0)-P_{\infty,1}-P_{\infty,2}\vspace*{-8pt} \] est d'ordre $13$ et vérifie de plus, en notant $\alpha(P)=(jx_0,-y_0)+(-jx_0,y_0)-P_{\infty,1}-P_{\infty,2}$, $\alpha(P)=4P$,
où l'on a choisi $j=\tfrac1{81}(-128x_0^{18} + 528x_0^{12} - 780x_0^6 - 25)$.
\end{prop}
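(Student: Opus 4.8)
L'énoncé exhibe précisément un point $P$ d'ordre $13$ de $\Jac(H)$ dont le sous-groupe est stable par $\alpha$, avec $\alpha(P)=4P$ (et $4$ est bien d'ordre $6$ dans $(\Zz{13})^*$) : c'est une vérification, que l'on rend réalisable en exploitant, comme pour $\tau=0$ plus haut, le fait que $\Jac(H)$, avec $H\colon y^2=x^6-1$, est isogène à un produit de deux courbes elliptiques. En effet $H$ possède les deux quotients de degré $2$ : $\pi_1\colon(x,y)\mapsto(x^2,y)$ vers $E_1\colon Y^2=X^3-1$, et $\pi_2\colon(x,y)\mapsto(x^2,xy)$ vers $E_2\colon Y^2=X^4-X$, cette dernière étant isomorphe sur $\Q$ à $w^2=s^3+1$ via $s=-1/X$, $w=Y/X^2$ ; les deux courbes $E_1$, $E_2$ sont à multiplication complexe par $\Z[j]$. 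Le plan est de pousser $P$ le long de ces deux morphismes.

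On calcule d'abord $\pi_{1*}(P)$ : les points $(x_0,y_0)$ et $(-x_0,-y_0)$ s'envoient sur $(x_0^2,y_0)$ et $(x_0^2,-y_0)$, opposés sur $E_1$, et les deux points à l'infini de $H$ s'envoient tous deux sur $O_{E_1}$ ; donc $\pi_{1*}(P)=O_{E_1}$, et $P$ est \og concentré dans la partie $E_2$\fg{}. Ensuite $\pi_{2*}(P)$ : les points $(x_0,y_0)$ et $(-x_0,-y_0)$ ont tous deux pour image le point $(x_0^2,x_0y_0)$ de $E_2$, et les deux points à l'infini de $H$ s'envoient sur les deux points à l'infini du modèle quartique, qui correspondent sur $w^2=s^3+1$ aux points opposés $(0,1)$ et $(0,-1)$ ; il s'ensuit que $\pi_{2*}(P)$ est le point $2Q$ de $w^2=s^3+1$, où $Q=\bigl(-x_0^{-2},\,y_0x_0^{-3}\bigr)$.

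Il reste deux vérifications, sur le corps de nombres $K=\Q(x_0)$ — en notant que $u_0:=x_0^6$ est racine de la quartique $u^4-4u^3+6u^2-\tfrac{13}{64}u+\tfrac{13}{256}$, et que le $j$ de l'énoncé appartient à $\Q(u_0)$ et y vérifie $j^2+j+1=0$. D'abord, l'ordre de $P$ : on calcule $13P=O$ et $P\neq O$ dans $\Jac(H)$ en coordonnées de Mumford, ce qui suffit puisque $13$ est premier ; de façon plus légère, vérifier $13Q=O$ et $Q\neq O$ sur $w^2=s^3+1$ donne déjà que la partie $13$-primaire de l'ordre de $P$ vaut $13$. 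Ensuite, la relation avec $\alpha$ : l'automorphisme $\alpha\colon(x,y)\mapsto(-jx,y)$ descend par $\pi_2$ en $(s,w)\mapsto(js,-w)$ sur $w^2=s^3+1$, c'est-à-dire en la multiplication par $-j$ dans l'ordre $\Z[j]$ ; comme $13\equiv1\mod 3$, l'idéal $(13)$ est décomposé dans $\Z[j]$, disons $13=(4+j)(4+j^2)$, et $j\equiv-4\mod{(4+j)}$, de sorte que la multiplication par $-j$ agit comme la multiplication par $4$ sur le groupe $E_2[(4+j)]$ ; un calcul direct de $\alpha(Q)=(js_0,-w_0)$ (où $Q=(s_0,w_0)$) et sa comparaison avec $4Q$ établissent $\alpha(Q)=4Q$, ce qui confirme du même coup que $Q$ engendre $E_2[(4+j)]$ et non $E_2[(4+j^2)]$, d'où le choix précis du $j$. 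En remontant par $\pi_2$, et puisque $\pi_{1*}(\alpha(P))=\pi_{1*}(4P)=O_{E_1}$, on obtient $\alpha(P)=4P$.

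L'essentiel du travail est l'arithmétique de ces deux points : multiplier un point par $13$, puis par $4$, sur une extension de $\Q$ de degré au plus $24$, en traînant la relation quartique pour $x_0^6$ — routine pour un système de calcul formel tel Magma, mais lourd à la main. C'est le contenu structurel, la décomposition $\Jac(H)\sim E_1\times E_2$ et l'action de $\alpha$ par multiplication complexe, qui ramène l'énoncé à cette vérification finie et explique d'où viennent le polynôme de $x_0$ et la formule de $j$ ; le seul point à surveiller est l'isogénie $\Jac(H)\to E_1\times E_2$, de degré une puissance de $2$ et donc étrangère à $13$ (ou, plus simplement, on la contourne en menant le calcul d'ordre directement dans $\Jac(H)$).
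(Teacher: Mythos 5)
Votre démonstration est correcte, mais elle ne suit pas la même route que le texte. La preuve de l'article est une vérification algébrique directe dans $\Jac(H)$ (\og on vérifie sans mal, à l'aide d'un logiciel tel Magma \fg{}), le point $P$ ayant été \emph{découvert} via la jacobienne analytique et son anneau d'endomorphismes (\texttt{EndomorphismRing}, \texttt{FromAnalyticJacobian}, puis \texttt{algdep} pour reconnaître le corps de définition), comme l'explique la remarque qui suit la proposition ; les facteurs elliptiques n'y interviennent pas. Vous ramenez au contraire toute la vérification à l'arithmétique des quotients elliptiques : la décomposition $\Jac(H)\sim E_1\times E_2$ issue des deux involutions $(x,y)\mapsto(-x,y)$ et $(x,y)\mapsto(-x,-y)$, le fait que $P$ se projette sur $O$ dans $E_1$ et sur $2Q$ dans $E_2\simeq\{w^2=s^3+1\}$, et l'action de $\alpha$ sur $E_2$ par multiplication complexe par $-j$, qui vaut $4$ sur $E_2[(4+j)]$ puisque $13=(4+j)(4+j^2)$ et $j\equiv-4\bmod(4+j)$. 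C'est en substance la méthode que l'article applique au cas $\tau=0$ de la section précédente (pour $y^2=x^6+1$) et qu'il écarte explicitement ici au profit de la jacobienne analytique. Votre variante a deux mérites : elle explique la provenance du polynôme de degré $24$ et de la formule donnant $j$, et elle allège le calcul ; elle demande en contrepartie de contrôler l'isogénie $(\pi_{1*},\pi_{2*})$, de degré une puissance de $2$, ce que vous faites correctement --- en particulier vous signalez à juste titre que le calcul sur $E_2$ seul ne fournit que la partie $13$-primaire de l'ordre de $P$, et que pour conclure à l'ordre exact $13$ puis à l'égalité $\alpha(P)=4P$ (la différence $\alpha(P)-4P$ étant tuée à la fois par $13$ et par une puissance de $2$) il faut soit revenir au calcul direct de $13P$ en coordonnées de Mumford, soit invoquer cet argument de coprimalité. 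Les deux approches restent in fine des vérifications machine ; la vôtre est simplement mieux structurée et plus économe.
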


\begin{rmq}
Pour aboutir à ce résultat, on utilise le logiciel Magma, qui, via la commande \texttt{EndomorphismRing}, donne l'anneau des endomorphismes de la jacobienne \og analytique \fg{} de la courbe $H$ d'équation $y^2=x^6-1$. Dès lors, il suffit de chercher un point du réseau dont l'action par l'endomorphisme de degré 6 soit la multiplication par $a=4$ par exemple. Cela se fait en résolvant des équations linéaires en des entiers. Ensuite, on repasse  à la jacobienne algébrique par la fonction \texttt{FromAnalyticJacobian}, puis on détermine, à l'aide de Pari~\cite{pari} et \texttt{algdep} par exemple, une extension dans laquelle pourraient être définies les cordonnées des points ainsi trouvés. Cela permet d'écrire le genre d'énoncé de la proposition ci-dessus, que l'on peut ensuite vérifier de façon algébrique, directement. Si les calculs pour déterminer $X$ semblent demander trop de puissance, on peut néanmoins utiliser cette proposition sur les corps finis. En effet, pour $p=139$, tout est défini sur $\F_p$ et on trouve finalement pour équation de $X$ la courbe hyperelliptique\vspace*{-8pt} 
\[y^2=2x^6 + 106x^5 + 21x^4 + 13x^3 + 77x^2 + 92x.\vspace*{-8pt} \]
Le polynôme caractéristique de la trace du Frobenius est $r^2+12r-16$, de discriminant réduit $4\cdot 13$,
d'où la multiplication réelle par $\Q(\sqrt{13})$.
\end{rmq}

\subsection{Type (1,1,4)}\label{type:114}
Ici, on considère $l\equiv1\mod8$. On travaille sur des corps possédant une racine carrée de $i$, que l'on note $\zeta_8$, \cad sur $\Q(\zeta_8)$ ou, par exemple, sur des corps fini de caractéristique $p$ congru à 1 modulo 8. On veut un recouvrement de degré 8, ramifié en trois points, deux d'ordre 8 et un d'ordre 2. La formule d'Hurwitz  donne $g(H)=\tfrac12\big(2+8(2g(\Pp^1-2)+2(8-1)+k(2-1)\big),$ avec $k\in\{2,4\}$ le nombre de points dans la fibre au-dessus du troisième point de ramification, nécessairement pair. La seule possibilité, pour avoir  un revêtement non ramifié d'ordre $l$  avec $g(H)\neq1$, est $k=4$ et $g(H)=2$.
\begin{prop}
Soit $H$ la courbe hyperelliptique de genre $2$, définie par l'équation $y^2=x^5+x$. Alors le revêtement $H\rightarrow\Pp^1$, $(x,y)\mapsto x^4$ est de type $(1,1,4)$, de degré $8$ tout comme l'automorphisme $(x,y)\mapsto(ix,\zeta_8y)$, qui engendre son groupe de Galois.
\end{prop}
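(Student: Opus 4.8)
Le plan consiste à dissocier les trois affirmations de l'énoncé. D'abord, on vérifie que $\rho\colon(x,y)\mapsto(ix,\zeta_8y)$ est bien un automorphisme de $H$, d'ordre $8$, laissant $x^4$ invariant : l'identité $(\zeta_8y)^2=iy^2=i(x^5+x)=(ix)^5+(ix)$ donne le premier point ; le calcul de $\rho^2\colon(x,y)\mapsto(-x,iy)$, le fait que $\rho^4$ est l'involution hyperelliptique et que $\rho^8=\id$ donnent l'ordre $8$ ; enfin $(ix)^4=x^4$ montre que $\rho$ appartient au groupe de Galois du revêtement $(x,y)\mapsto x^4$. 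Ensuite, pour le degré : $k(H)=k(x)[y]/(y^2-x^5-x)$ est de degré $2$ sur $k(x)$, lui-même de degré $4$ sur $k(x^4)$, donc $[k(H):k(x^4)]=8$ ; le revêtement est de degré $8$, et comme $|\langle\rho\rangle|=8$ il est galoisien de groupe $\Zz8$, engendré par $\rho$.

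Il reste à déterminer les points de branchement et leur type, ce qui constitue le cœur de la preuve. En posant $u=x^4$, on examine la fibre au-dessus de chaque valeur de $u$, en gardant à l'esprit que les points de \WS de $H$ sont les $x\in\{0,\zeta_8,\zeta_8^3,\zeta_8^5,\zeta_8^7\}$ ainsi que l'unique point à l'infini (le modèle étant de degré impair). Pour $u\notin\{0,-1,\infty\}$, les quatre racines quatrièmes de $u$ ne sont pas de \WS, la fibre compte $4\times2=8$ points et le revêtement n'y est pas ramifié. Au-dessus de $u=0$, le seul antécédent est $x=0$, de \WS, donc la fibre se réduit à un point, d'indice de ramification $8$ ; de même au-dessus de $u=\infty$, la fibre est le point à l'infini, d'indice $8$. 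Au-dessus de $u=-1$, on a $x^4=-1$, soit les quatre points $(\zeta_8^k,0)$ avec $k$ impair, tous de \WS : la fibre a quatre points, chacun d'indice de ramification $2$ et de stabilisateur $\langle\rho^4\rangle$. On a donc trois points de branchement, de monodromie d'ordres respectifs $8$, $8$ et $2$, ce qui, en adaptant la notation~\ref{nota:types} au degré $n=8$, donne les entiers $d_i=8/8,\ 8/8,\ 8/2$, soit le type $(1,1,4)$. On contrôlera enfin par la formule d'Hurwitz : $2g(H)-2=8(-2)+7+7+4=2$, ce qui redonne bien $g(H)=2$.

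Aucune difficulté sérieuse n'est à prévoir : le seul point demandant un peu d'attention est de bien isoler l'unique point à l'infini du modèle de degré impair et de s'assurer que c'est la fibre au-dessus de $u=-1$ --- et non celle au-dessus de $0$ ou de $\infty$ --- qui porte la ramification d'ordre $2$. Une fois ce repérage fait, les calculs locaux d'indices de ramification (via $y^2\sim x$ au voisinage de $(0,0)$, via $x^4+1\sim y^2$ au voisinage des $(\zeta_8^k,0)$, via un paramètre local du type $x=1/t^2$ à l'infini) sont entièrement routiniers.
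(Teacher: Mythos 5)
Votre démonstration est correcte et suit essentiellement la même démarche que celle de l'article : identification des trois points de branchement $0$, $\infty$ et $-1$, puis description des fibres (un seul point, d'indice $8$, au-dessus de $0$ et de $\infty$ ; les quatre points de \WS restants, d'indice $2$, au-dessus de $-1$), d'où le type $(1,1,4)$. Notez que vous placez correctement le point de branchement d'ordre $2$ en $u=-1$, ce qui rectifie au passage la coquille de l'article (qui annonce une ramification en \og $1$ \fg{} avant de décrire la fibre au-dessus de $-1$) ; vos vérifications supplémentaires --- ordre de l'automorphisme, degré de l'extension de corps de fonctions, contrôle par la formule d'Hurwitz --- sont des compléments de détail par rapport à la preuve très concise de l'article.
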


\begin{proof}
En effet, ce revêtement est ramifié en $0, \infty$ et $1$. Pour les deux premiers, la fibre ne comporte qu'un seul point, respectivement $(0,0)$ et le point à l'infini. En $-1$, la fibre comporte les quatre points de \WS restants, identifiés par $(x,y)\mapsto x^4$.
\end{proof}

Comme précédemment, $H$ n'est pas simple et on peut encore utiliser les mêmes techniques pour construire un revêtement non ramifié $Y$, puis le quotient $X$.

On peut aussi adapter l'algorithme~\ref{algo:mult:reelle6} afin d'utiliser un revêtement $Y$ \og plus simple \fg{}, du type exposé à la section précédente. Notons d'une part que la jacobienne de $y^2=x^5+x$ peut posséder un point d'ordre $l$ sur $\F_p$, sans que ses facteurs elliptiques n'en possèdent. D'autre part, si le nombre de points sur $\F_p$ de la jacobienne est de la forme $kl$ avec $l$ ne divisant pas $k$, alors, on est sûr que le sous-groupe d'ordre $l$ est stable par $\alpha:(x,y)\mapsto(ix,\zeta_8y)$. Enfin, pour trouver un point d'ordre $l$, on peut utiliser des \og tordues \fg{} de $H$, d'équations $y^2=x^5+ax$.  

\begin{expl} Pour $l=17$, à l'aide de Magma et de façon similaire à la section précédente, on aboutit à la proposition suivante.
\end{expl}

\begin{prop}
Soit $H$ la courbe hyperelliptique $y^2=x^5+x$. Il existe un point $P$ de la jacobienne de $H$, défini en représentation de Mumford sur une extension de degré $64$, d'ordre $17$ tel que $\alpha(P)=-2P$. Il est défini en langage Magma, à l'adresse \emph{\urlb{www.normalesup.org/~iboyer/files/pt17stable.m}}.
\end{prop}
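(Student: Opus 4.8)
On adopterait pour cette proposition la même démarche que dans la remarque du type~\hyperref[type:112]{$(1,1,2)$}. Rappelons que $\alpha$ désigne ici l'automorphisme $(x,y)\mapsto(ix,\zeta_8 y)$ de $H$, d'ordre $8$, dont la quatrième puissance $\alpha^4:(x,y)\mapsto(x,-y)$ n'est autre que l'involution hyperelliptique $\iota$. On cherche un point $P\in\Jac(H)$ d'ordre $17$ tel que $\alpha(P)=-2P$ : comme $-2$ est d'ordre $8$ dans $(\Zz{17})^*$, cette condition équivaut exactement à demander que le sous-groupe $\langle P\rangle$ soit stable par $\alpha$, ainsi qu'il est requis dans la construction de $Y$. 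L'existence d'un tel point est immédiate : sur $H_1(H,\Z)\cong\Z^4$ on a $[\alpha]^4=[\iota]=-\Id$, donc le polynôme minimal de $[\alpha]$ divise $\Phi_8(X)=X^4+1$, et comme $H_1(H,\Q)$ est de dimension $4$, le polynôme caractéristique de $[\alpha]$ est exactement $X^4+1$. Modulo $17$ ce polynôme se scinde en quatre facteurs linéaires distincts, dont $X+2$, puisque $-2$ est une racine primitive huitième de l'unité de $\F_{17}$. L'espace propre de $[\alpha]$ pour la valeur propre $-2$ agissant sur $\Jac(H)[17]\cong\F_{17}^4$ est donc une droite, et chacun de ses $16$ points non nuls convient.

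Pour en exhiber un effectivement, on procéderait comme dans la remarque citée. On calcule, grâce à la commande \texttt{EndomorphismRing} de Magma appliquée à la jacobienne analytique $\C^2/\Lambda$ de $H$, la matrice $M\in\Mc_4(\Z)$ décrivant l'action de $[\alpha]$ sur le réseau $\Lambda$ ; on résout ensuite dans $\Lambda$ le système $Mv\equiv-2v\pmod{17}$, de sorte que $P:=\tfrac1{17}v\bmod\Lambda$ soit, par construction, d'ordre $17$ avec $[\alpha]P=-2P$. La fonction \texttt{FromAnalyticJacobian} ramène $P$ sur la jacobienne algébrique de $H$ et en fournit une représentation de Mumford, dont les coordonnées sont des nombres complexes connus à grande précision.

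Il reste à reconnaître ces coordonnées comme nombres algébriques, au moyen de \texttt{algdep} de Pari par exemple, ce qui exprime $P$ sur un corps de nombres $K$ de degré $64$. On oublie alors l'origine analytique du point et l'on vérifie tout de façon purement algébrique sur $K$ : que les deux points de la représentation de Mumford de $P$ appartiennent à $H$, que $17P=0$ dans $\Jac(H)(K)$ tandis que $P\neq0$, et que $\alpha(P)=-2P$ ; ce sont des calculs d'arithmétique de Mumford directs, rassemblés dans le fichier Magma dont l'adresse est indiquée. L'obstacle principal est donc de nature calculatoire et non conceptuelle : l'existence résulte du seul argument d'espace propre ci-dessus, et le point délicat est la reconnaissance numérique fiable des coordonnées dans une extension de degré $64$, qui oblige à pousser assez loin la précision flottante ; la vérification algébrique finale, elle, est longue mais routinière.
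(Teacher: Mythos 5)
Votre démarche est correcte et coïncide pour l'essentiel avec celle du texte : la proposition y est obtenue précisément par la procédure Magma (\texttt{EndomorphismRing} sur la jacobienne analytique, résolution d'un système linéaire modulo $17$ sur le réseau, retour par \texttt{FromAnalyticJacobian}, reconnaissance des coordonnées par \texttt{algdep} puis vérification purement algébrique) décrite dans la remarque du type $(1,1,2)$ et simplement invoquée \og de façon similaire \fg{} ici, et votre argument d'existence par l'espace propre de $X^4+1$ modulo $17$ est un supplément bienvenu que le texte laisse implicite. Seule petite imprécision : $\alpha(P)=-2P$ n'est pas \emph{équivalent} à la stabilité de $\langle P\rangle$ par $\alpha$, laquelle impose seulement $\alpha(P)=aP$ avec $a^4\equiv-1\pmod{17}$, soit $a\in\{2,-2,8,-8\}$ ; la valeur $-2$ n'est que l'une des quatre valeurs propres possibles, celle que le calcul fournit effectivement.
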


\begin{expl}
Pour $p=137$, la jacobienne de la courbe hyperelliptique $y^2=x^5+3x$ possède $2\cdot17^2\cdot41$ points sur $\F_p$. On vérifie que, en représentation de Mumford,
$P=(x^2 + 41x + 11, 98x + 84)$, 
est d'ordre 17, et satisfait $\alpha(P)=-2P$. On trouve finalement pour $X$ la courbe hyperelliptique de genre 2
\[%
y^2=x^6 + 10x^5 + 3x^4 + 103x^3 + 20x^2 + 10x + 120\vspace*{-2pt}
\] et le polynôme caractéristique de la trace du Frobenius,  $r^2-9\cdot17$, définit, malgré~des apparences \og supersingulières \fg{},  le corps de nombres $\Q(\sqrt{17})$.

Comme on l'a déjà remarqué, le sous-groupe des points d'ordre 41 est nécessairement stable par $\alpha$, et on vérifie par exemple que pour $P=(x^2 + 70x + 60, 84x + 18)$, 
on a $\alpha(P)=3P$. Cela nous permet de calculer une équation de $X$ de genre 5, à multiplication réelle par $\Q(\zeta_{41}^{(8)})$. Le calcul est très rapide, alors qu'il serait certainement impossible en utilisant les techniques exposées dans les sections~\ref{type:1122} et \ref{type:2233}. On obtient une courbe de genre 5 que l'on peut exprimer comme l'intersection de 3 quadriques sur $\Pp^4$. Il suffit, comme on l'a déjà fait plusieurs fois avec les courbes de genre 3, de résoudre des équations linéaires en les coefficients d'une quadrique homogène générale en 5 variables, que l'on évalue sur une base de différentielles holomorphes. Parmi les quadriques que l'on trouve, on en choisit 3 \og indépendantes \fg, ce que l'on peut vérifier en calculant, par exemple, la dimension de leur intersection dans $\Pp^4$. On trouve les équations quadriques suivantes en $a,b,c$ et $d$, de degré total 2.

{\footnotesize
\begin{align*}
q_1(a,b,c,d)&=ac+105ad+118b^2+107bc+136bd+126c^2+113cd+98d^2+132a+40b+47c+69d+1\\
q_2(a,b,c,d)&=ab+77ad+101b^2+17bc+45bd+131c^2+36cd+69d^2+121a+113c+33d+14\\
q_3(a,b,c,d)&=a^2+122ad+107b^2+132bc+65bd+90c^2+111cd+47d^2+91a+131b+73c+128d+59
\end{align*}
}%
\end{expl}

\subsection{Type (1,2,5)}\label{type:125}
Pour ce dernier cas, on a nécessairement $l\equiv1\mod10$. On travaille sur des corps possédant une racine cinquième de l'unité, que l'on note $\zeta_5$, \cad sur $\Q(\zeta_5)$ ou, par exemple, sur des corps finis de caractéristique $p$ congrue à 1 modulo 5.

On veut ici un recouvrement de degré 10, ramifié en trois points, d'ordre 10, 5 et 2. La formule d'Hurwitz nous donne
\[g(H)=\frac12\big(2+10(2g(\Pp^1-2)+(10-1)+r(5-1)+s(2-1)\big)\]
avec $r\in\{1,2\}$ le nombre de points dans la fibre au-dessus du point de ramification d'ordre 5 et $s\in\{1,3,5\}$ pour la fibre d'ordre 2, nécessairement impair. La seule possibilité, pour avoir ensuite un revêtement non ramifié d'ordre $l$ qui ne soit pas une courbe elliptique, est $r=2$, $s=5$ pour $g(H)=2$.

\begin{prop}
Soit $H$ la courbe hyperelliptique de genre $2$, définie par l'équation $y^2=x^5+1$. Alors le revêtement $H\rightarrow\Pp^1$, $(x,y)\mapsto y^2$ est de type $(1,2,5)$, de degré $10$ tout comme l'automorphisme $(x,y)\mapsto(\zeta_5 x,-y)$, qui engendre son groupe de Galois.
\end{prop}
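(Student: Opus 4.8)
Le plan est de suivre le schéma des démonstrations des propositions analogues pour les types \hyperref[type:1122]{$(1,1,2,2)$}, \hyperref[type:112]{$(1,1,2)$} et \hyperref[type:114]{$(1,1,4)$}. Posons $t:=y^2$, de sorte que sur $H$ on a aussi $t=x^5+1$ ; le revêtement en jeu est $(x,y)\mapsto t$. On vérifie d'abord que l'automorphisme annoncé est bien défini sur $H$, qu'il y préserve $t$ et qu'il est d'ordre $10$ : cela donne simultanément le degré $10$ du revêtement et le fait qu'il est galoisien cyclique, de groupe de Galois engendré par cet automorphisme. On détermine ensuite le lieu de branchement et les ordres de ramification afin de lire le type.

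Pour le premier point, posons $\alpha:(x,y)\mapsto(\zeta_5x,-y)$. Comme $\zeta_5^5=1$, on a $(\zeta_5x)^5+1=x^5+1=y^2=(-y)^2$, donc $\alpha$ envoie un point de $H$ sur un point de $H$, et il laisse visiblement invariante la fonction $t=y^2$. On calcule $\alpha^2:(x,y)\mapsto(\zeta_5^2x,y)$, puis $\alpha^5:(x,y)\mapsto(x,-y)$ qui n'est autre que l'involution hyperelliptique, et enfin $\alpha^{10}=\id$ ; ainsi $\alpha$ est d'ordre exactement $10$. Ce nombre est aussi le degré de $t$, puisque la fibre générique s'obtient en posant $y=\pm\sqrt t$ puis $x^5=t-1$. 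Par conséquent $\langle\alpha\rangle$ est le groupe de Galois du revêtement, cyclique d'ordre $10$.

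Pour le type, les valeurs critiques de $t$ sont contenues dans $\{0,1,\infty\}$ : pour $c\notin\{0,1,\infty\}$ et le corps de base de caractéristique étrangère à $10$, la fibre $\{t=c\}$ est formée des dix points $(x,\pm\sqrt c)$ avec $x^5=c-1$, deux à deux distincts et non ramifiés. Chacune de ces trois valeurs est effectivement critique. Au-dessus de $t=0$, on a $y=0$, soit les cinq points de \WS $(x_i,0)$ avec $x_i^5=-1$ ; en chacun, $y$ est une uniformisante et $t=y^2$, d'où une ramification d'ordre $2$, une monodromie d'ordre $2$ et $d=\tfrac{10}{2}=5$. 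Au-dessus de $t=1$, on a $y=\pm1$ et $x^5=0$, soit les deux points $(0,1)$ et $(0,-1)$ ; en chacun, $x$ est une uniformisante et $t-1=x^5$, d'où une ramification d'ordre $5$, une monodromie d'ordre $5$ et $d=\tfrac{10}{5}=2$. Au-dessus de $t=\infty$ enfin, le degré $5$ de $x^5+1$ étant impair, $H$ ne possède qu'un seul point à l'infini (qui est aussi un point de \WS) ; en prenant une uniformisante $u$ avec $x=u^{-2}$ et $y=u^{-5}\bigl(1+O(u)\bigr)$, on trouve $t=y^2=u^{-10}\bigl(1+O(u)\bigr)$, soit une ramification d'ordre $10$ et $d=\tfrac{10}{10}=1$. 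Le revêtement $(x,y)\mapsto y^2$ est donc de type $(1,2,5)$.

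L'argument ne présente pas de difficulté sérieuse : le point le plus délicat est l'analyse en l'infini, où l'unicité du point et la valeur $10$ de l'indice de ramification reposent sur la parité de $5$. On peut au passage confirmer que rien n'a été oublié en constatant que la formule de \Hu est exactement saturée par les ramifications trouvées, $2g(H)-2=10\bigl(2g(\Pp^1)-2\bigr)+5\cdot1+2\cdot4+1\cdot9=-20+22=2$, ce qui redonne $g(H)=2$, le genre requis pour la construction ultérieure du revêtement non ramifié de degré $l$.
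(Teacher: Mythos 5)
Votre démonstration est correcte et suit essentiellement la même démarche que celle de l'article, qui se contente d'identifier les fibres au-dessus de $0$, $1$ et $\infty$ ; vous y ajoutez la vérification explicite que $\alpha$ est d'ordre $10$ et engendre le groupe de Galois, le calcul des uniformisantes locales et le contrôle par la formule de \Hu. Notez que votre fibre au-dessus de $t=1$, formée de $(0,1)$ et $(0,-1)$, est la bonne : l'article écrit $(\pm1,0)$, ce qui est manifestement une coquille puisque ces points ne vérifient pas $y^2=1$.
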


\begin{proof}
En effet, ce revêtement est ramifié en $0$, $\infty$ et $1$. En 0, la fibre comporte les 5 points de \WS $((-\zeta_5)^k,0)$, en 1, la fibre est composée des deux points $(-1,0)$ et $(1,0)$ tandis qu'en l'infini, la fibre ne comporte que le dernier point de \WS, le point à l'infini. 
\end{proof}

À partir d'un point d'ordre $l$ sur la jacobienne de $H$, on trouve une fonction $\psi(x,y)$ sur $H$ dont  chacun des points du diviseur est d'ordre multiple de $l$. Comme précédemment, on cherche des points qui sont stables par $\alpha:(x,y)\mapsto(\zeta_5x,-y)$, le signe moins sur l'ordonnée s'obtenant par $[-1]$ sur la jacobienne.

La jacobienne de la courbe $H$ est simple, à multiplication complexe par $\Q(\zeta_5)$, ce qui facilite beaucoup la recherche d'un tel point. La jacobienne de $H$ peut être vue comme $\C^2/\Phi(\Oc)$ où $\Oc$ est l'anneau des entiers de $\Q(\zeta_5)$ et $\Phi$ est donné par le \tcm. On vérifie que le réseau 
\[\begin{pmatrix}
\zeta_5&\zeta_5^3&1&\zeta_5^2 + 1\\\zeta_5^2&\zeta_5^6&1&\zeta_5^4 + 1
\end{pmatrix}\]
convient. Soit maintenant $l$ un nombre premier vérifiant de plus $l\equiv1\mod 10$. Alors, l'idéal $(l)$ est décomposé dans $\Q(\zeta_5)$. Soit $a$ d'ordre 5 dans $(\Zz l)^*$, si bien que l'idéal $(\zeta_5-a)$ possède un facteur premier, et principal, $(\pi)$ qui divise $(l)$. Ainsi,
\[\frac{\zeta_5}{\pi}=\frac{a}{\pi}\mod\Oc\]
si bien que le point de la jacobienne $\tfrac1\pi$ est d'ordre $l$, et le sous-groupe qu'il engendre est stable par la multiplication complexe par $\zeta_5$. 
Grâce à ce point de la jacobienne, on construit une fonction $\phi(x,y)$ dont le diviseur est $l$ fois le diviseur de ce point, au sens~(\ref{div:jac}). On peut alors prendre simplement comme extension $t^l=\phi(x,y)$  puisque par construction, on a l'existence d'une fonction $f(x,y)$ vérifiant $\phi(\zeta_5x,y)=\phi(x,y)^af(x,y)^l$
et on en déduit, de façon classique sur les ordonnées, une fonction $\Phi(x,y)$ telle que
\[\phi(\zeta_5x,-y)=\phi(x,y)^a\Phi(x,y)^l.\]
Le revêtement de $H$ par la courbe $Y$, définie par l'extension de Kummer $t^l=\phi(x,y)$, donne un revêtement total sur $\Pp^1$ galoisien dont le groupe de Galois $G_{l,10}$ est engendré par les automorphismes
\begin{align*}
\alpha:(x,y,t)&\mapsto\big(\zeta_5x,-y,t^a\Phi(x,y)\big)\\
\sigma:(x,y,t)&\mapsto(x,y,\zeta_l t).
\end{align*}
On obtient ensuite, comme depuis le début,  la courbe $X$ en prenant le résultant
\[\Res_t\big(t^l-\phi(x,y),z-\sum\limits_{i=0}^5\alpha^i(t)\big).\] Ce résultant
s'exprime en $y^2$ et $x^5$, ce qui fournit l'équation de $X$ en prenant le reste par $y^2-(x^5+1)$ d'une part, puis par $x^5-w$ d'autre part.

\begin{expl}Dans un premier temps, on cherche des points de la jacobienne de $y^2=x^5+1$ d'ordre $l$ et dont le sous-groupe engendré est stable par la multiplication complexe. On donne la proposition suivante pour $l=11$ et $l=31$, qui se vérifie facilement avec Magma par exemple.
\end{expl}

\begin{prop} Les points suivants engendrent des groupes d'ordre $l$, stables par $(x,y)\!\mapsto\!(\zeta_5x,y)$.
\begin{enumerate}
\item On considère $l=11$ et soit $\gamma$ une racine de $t^5 + \tfrac1{121}(80\zeta_5^3 + 320\zeta_5^2 + 1040\zeta_5 + 1264)$ et $\delta$ une racine de $t^2 + \tfrac{1}{11}(-36\zeta_5^3 - 12\zeta_5^2 - 28\zeta_5 + 1)$. Alors, le point 
\[P=\big(t^2 + \tfrac14(\zeta_5^3 + 2\zeta_5)\gamma^3t + \gamma,
\tfrac{1}{22}(-\zeta_5^3 + \zeta_5^2 + 8\zeta_5 - 6)\gamma^2\delta t + \delta\big)\]
est d'ordre $11$ et vérifie $\alpha(P)=2P$.
\item On considère $l=31$. Il existe un point $P$ de la jacobienne de $y^2=x^5+1$, donné en représentation de Mumford sur une extension de degré $120$, tel que $31P=0$ et $\alpha(P)=-2P$. Sa définition est donnée à l'adresse \emph{\urlb{www.normalesup.org/~iboyer/files/pt31stable.m}}, dans le langage Magma.
\end{enumerate}
\end{prop}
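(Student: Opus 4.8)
Le point de départ est que la moitié \og existence \fg{} de l'énoncé est déjà contenue dans la discussion qui précède. En écrivant la décomposition $(l)=\prod_i\pi_i$ de $l$ dans $\Oc=\Z[\zeta_5]$ et en choisissant l'idéal premier $\pi=\pi_i$ pour lequel $\zeta_5\equiv a\pmod\pi$, où $a$ est d'ordre multiplicatif $5$ dans $(\Zz l)^*$, la classe de $\tfrac1\pi$ dans $\C^2/\Phi(\Oc)=\Jac(H)(\C)$ est un point de $l$\ti torsion dont le groupe cyclique est stable par $[\zeta_5]$, donc par $\alpha$, lequel agit sur $\Jac(H)$ comme $[-\zeta_5]$ (composée de la multiplication complexe et de l'involution hyperelliptique) ; on a de plus $[\zeta_5]P=aP$, \cad $\alpha(P)=-aP$. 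On prend $a\equiv-2\pmod{11}$ (d'ordre $5$) si $l=11$, ce qui donne $\alpha(P)=2P$, et $a\equiv2\pmod{31}$ (d'ordre $5$) si $l=31$, ce qui donne $\alpha(P)=-2P$. Le point~2 est alors acquis, les coordonnées étant simplement trop volumineuses pour être reproduites dans le texte ; pour le point~1 il reste à exhiber des coordonnées explicites et à certifier que le point \emph{affiché} est bien ce $P$.

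Pour produire ces coordonnées, je calculerais numériquement, à grande précision, la classe de $\tfrac1\pi$ modulo $\Phi(\Oc)$ à l'aide du réseau de périodes écrit plus haut, puis je la transporterais sur la jacobienne algébrique par l'inverse de l'application d'Abel--Jacobi de $H$ (par exemple grâce à la machinerie \og jacobienne analytique \fg{} de Magma), obtenant une représentation de Mumford approchée $\bigl(t^2+u_1t+u_0,\ v_1t+v_0\bigr)$. La reconnaissance de $u_0,u_1,v_0,v_1$ comme nombres algébriques sur $\Q(\zeta_5)$, par réduction de réseau (commande \texttt{algdep}), fait apparaître la forme de la réponse : $u_0=\gamma$ racine d'un polynôme de degré $5$ sur $\Q(\zeta_5)$, $v_0=\delta$ racine d'un polynôme de degré $2$, et $u_1,v_1$ dans $\Q(\zeta_5,\gamma,\delta)$. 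C'est là que se concentre l'effort calculatoire effectif ; pour $l=31$ le corps ainsi obtenu est de degré $120$ sur $\Q$, ce qui explique que l'on se borne à l'existence en renvoyant au fichier vérifiable par machine.

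Enfin — et c'est cette étape qui constitue véritablement une \emph{preuve} et non une heuristique — j'oublierais la provenance analytique du point et vérifierais tout symboliquement sur le corps $K=\Q(\zeta_5)[\gamma,\delta]$, présenté par les polynômes minimaux donnés et par $\zeta_5^4+\zeta_5^3+\zeta_5^2+\zeta_5+1=0$ : d'abord que le couple affiché $(u,v)$ est bien un diviseur réduit de degré $2$ de $H\colon y^2=x^5+1$, c'est-à-dire que $v(t)^2\equiv t^5+1\pmod{u(t)}$ ; puis, par l'algorithme de Cantor, c'est-à-dire la loi d'addition sur $\Jac(H)$ déjà employée dans tout l'article, que $11P$ est l'élément neutre et que l'image de ce diviseur par $(x,y)\mapsto(\zeta_5x,-y)$ vaut $2P$ (respectivement $-2P$ pour $l=31$). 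Chacune de ces assertions se ramène à une identité entre polynômes en $t$ à coefficients dans $K$, donc se décide par réduction modulo les relations définissant $K$ : un calcul fini qu'un logiciel tel Magma ou Pari règle immédiatement. Je m'attends à ce que l'unique véritable obstacle soit d'échelle, à savoir le coût de l'arithmétique exacte sur un corps de degré $120$ pour $l=31$, lequel impose la formulation \og il existe un point \dots donné à l'adresse \dots \fg{} plutôt que l'exhibition d'un témoin imprimé.
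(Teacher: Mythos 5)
Votre démarche est correcte et coïncide avec celle de l'article : construction du point comme la classe de $\tfrac1\pi$ via la multiplication complexe par $\Q(\zeta_5)$ (avec $\alpha$ agissant comme $[-\zeta_5]$, d'où $\alpha(P)=-aP$), reconnaissance des coordonnées par jacobienne analytique et \texttt{algdep}, puis vérification algébrique directe sous Magma — c'est exactement le schéma décrit dans la remarque de la section précédente et que l'article résume par \og se vérifie facilement avec Magma \fg{}. Rien à redire.
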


Toutefois, le degré des extensions considérées semble trop important pour calculer un modèle plan de la courbe $X$. Néanmoins, on peut utiliser ces résultats pour réduire les calculs sur $\F_p$. Notons que l'on peut aussi utiliser les jacobiennes des courbes $y^2=x^5+a$ dont le nombre de points sur $\F_p$ est divisible par $l$ mais pas par $l^2$.

\begin{expl} Pour $l=31$, on se place pour $\F_{61}$. La jacobienne de $y^2=x^5+8$ possède $11^2\cdot31$ points, ce qui assure la stabilité du sous-groupe d'ordre 31. On trouve pour $X$ l'équation quartique
{\footnotesize
\[{u}^{4}+30{u}^{3}v+2{u}^{2}{v}^{2}+26u{v}^{3}+41{v}^{4}+46{u
}^{3}+50{u}^{2}v+37u{v}^{2}+17{v}^{3}+14{u}^{2}+21uv+16{v}
^{2}+35u+8v+32
\]
}
dont le polynôme caractéristique de la trace du Frobenius est $r^3+34r^2+375r+1334$, définit le corps de nombres $\Q(\zeta_{31}^{(10)})$.
\end{expl}
 
\section{Résumé des résultats}\label{chap2:sec:fin}
Pour conclure résumons dans la table~\ref{res:table} les principaux résultats obtenus ci-dessus, concernant des courbes explicites à multiplication réelle dont J. Ellenberg montre l'existence dans~\cite{ellen}.


\begin{table}[H]
\centering
\begin{tabular}{|c||c|c|c|}
\hline
\multirow{2}{*}{$n,l$}&\multirow{2}{*}{Courbes exposées}&Corps de&\multirow{2}{*}{Existence théorique}\\
&&définition&
\\\hline&&&\\[-1.3em]\hline
\multirow{3}{*}{$\Q(\zeta_l^+)$}&famille à 1 paramètre~\cite{top}&$\Q$&\multirow{3}{*}{famille à 3 paramètres}\\
&famille à 2 paramètres&$\Q$&\\
&famille à 3 paramètres&extension&\\\hline
\multirow{2}{*}{$\Q(\zeta_l^{(4)})$}&1 courbe&$\Q(i)$&\multirow{2}{*}{famille à 1 paramètre}\\[-3pt]
&famille à 1 paramètre&extension&\\
\hline
\multirow{2}{*}{$\Q(\zeta_l^{(6)})$}&famille à 1 paramètre&extension&\multirow{2}{*}{famille à 1 paramètre}\\[-3pt]
&1 courbe&extension&\\
\hline
$\Q(\zeta_l^{(8)})$&1 courbe&extension&1 courbe\\
\hline
$\Q(\zeta_l^{(10)})$&1 courbe&extension&1 courbe\\
\hline
\end{tabular}
\caption{Courbes explicites à multiplication réelles par des sous-corps de cyclotomiques}
\label{res:table} 
\end{table}

\bibliographystyle{bib}
\phantomsection
\addtotoc{Bibliographie}
\bibliography{these}

\end{document}